\documentclass[11pt]{amsart}
\usepackage{amsmath,amsthm,amsfonts,amssymb,mathrsfs, mathtools}
\usepackage{enumerate}
\usepackage[colorlinks=true, linkcolor=blue, citecolor=magenta, menucolor=black]{hyperref}
\usepackage[left=3.5cm, right=3.5cm]{geometry}
\usepackage[utf8]{inputenc}
\usepackage[demo]{graphicx}
\usepackage[up]{caption}
\usepackage{subcaption}
\usepackage{pgf,tikz}
\usepackage[toc,page]{appendix}
\usetikzlibrary{arrows}
\usetikzlibrary{patterns}
\usepackage{color}

\makeatletter
\@namedef{subjclassname@2020}{%
  \textup{2020} Mathematics Subject Classification}
\makeatother

\usepackage{tikz-cd}

\numberwithin{equation}{section}

\setcounter{tocdepth}{1}

\newtheorem{letterthm}{Theorem}

\newtheorem{thm}{Theorem}[section]
\newtheorem{lem}[thm]{Lemma}

\newtheorem{cor}[thm]{Corollary}
\newtheorem{prop}[thm]{Proposition}

\theoremstyle{definition}
\newtheorem{rem}[thm]{Remark}

\newtheorem{df}[thm]{Definition}

\newtheorem{claim}[thm]{Claim}

\newcommand{\C}{\mathbb{C}}
\newcommand{\Z}{\mathbb{Z}}
\newcommand{\F}{\mathbb{F}}

\newcommand{\N}{\mathbb{N}}

\newcommand{\Tr}{\operatorname{Tr}}

\newcommand{\Id}{\operatorname{Id}}

\newcommand{\dpr}{^{\prime\prime}}

\newcommand{\rE}{\operatorname{E}}
\newcommand{\rC}{\operatorname{C}}
\newcommand{\rL}{\operatorname{L}}

\newcommand{\PSL}{\operatorname{PSL}}

\allowdisplaybreaks

\begin{document}

\title[Asymptotic freeness in tracial ultraproducts]{Asymptotic freeness in tracial ultraproducts}

\begin{abstract}
We prove novel asymptotic freeness results in tracial ultraproduct von Neumann algebras. In particular, we show that whenever $M = M_1 \ast M_2$ is a tracial free product von Neumann algebra and $u_1 \in \mathscr U(M_1)$, $u_2 \in \mathscr U(M_2)$ are Haar unitaries, the relative commutants $\{u_1\}' \cap M^{\mathcal U}$ and $\{u_2\}' \cap M^{\mathcal U}$ are freely independent in the ultraproduct  $M^{\mathcal U}$. Our proof relies on Mei--Ricard's results \cite{MR16} regarding $\rL^p$-boundedness (for all $1 < p < +\infty$) of certain Fourier multipliers in tracial amalgamated free products von Neumann algebras. We derive two applications. Firstly, we obtain a general absorption result in tracial amalgamated free products that recovers several previous maximal amenability/Gamma absorption results. 
Secondly, we prove a new lifting theorem which we combine with our asymptotic freeness results and Chifan--Ioana--Kunnawalkam Elayavalli's recent construction \cite{CIKE22} to provide the first example of a ${\rm II_1}$ factor that does not have property Gamma and is not elementary equivalent to any free product of diffuse tracial von Neumann algebras.
\end{abstract}

\author{Cyril Houdayer}
\address{\'Ecole Normale Sup\'erieure \\ D\'epartement de Math\'ematiques et Applications \\ Universit\'e Paris-Saclay \\ 45 rue d'Ulm \\ 75230 Paris Cedex 05 \\ FRANCE}
\email{cyril.houdayer@ens.psl.eu}
\thanks{CH is supported by Institut Universitaire de France}

\author{Adrian Ioana}
\address{Department of Mathematics \\ University of California San Diego \\ 9500 Gilman Drive \\ La Jolla \\ CA
92093 \\ USA}
\email{aioana@ucsd.edu}
\thanks{AI is supported by NSF DMS grants 1854074 and 2153805, and a Simons Fellowship}

\subjclass[2020]{46L10, 46L51, 46L54, 46L52, 03C66}
\keywords{Amalgamated free products; Continuous model theory; Noncommutative $\rL^p$-spaces; Ultraproducts; von Neumann algebras}

\maketitle

\section{Introduction}

In order to state our main results, we recall the following terminology regarding $n$-{\em independence} and {\em freeness}. 

\subsection*{Terminology} Let $(M, \tau)$ be a tracial von Neumann algebra together with a von Neumann subalgebra $B \subset M$. We denote by $\rE_B : M \to B$ the unique trace-preserving faithful normal conditional expectation and  set $M \ominus B = \ker(\rE_B)$.

Let $n \geq 1$. Following Popa (see e.g.\! \cite{Po13a, Po13b}), we say that two subsets $X, Y \subset M \ominus B$ are $n$-{\em independent} in $M$ with respect to $\rE_{B}$ if $\rE_B(x_1y_1\cdots x_ky_k)=0$, for every $1\leq k\leq n$, $x_1,\dots ,x_k\in X$ and $y_1,\dots,y_k\in Y$. We then say that two intermediate von Neumann subalgebras $B \subset M_1, M_2\subset M$ are $n$-{\em independent} in $M$ with respect to $\rE_B$ if the sets $M_1 \ominus B$ and $M_2 \ominus B$ are $n$-independent with respect to $\rE_B$. When $B = \C1$, two von Neumann subalgebras $M_1, M_2 \subset M$ are $1$-independent in $M$ with respect to $\tau$ if and only if $M_1$ and $M_2$ are $\tau$-orthogonal, i.e., $\tau(xy)=\tau(x)\tau(y)$, for every $x\in M_1$ and $y\in M_2$.

Let $I$ be a nonempty index set. We say that a family $(X_i)_{i \in I}$ of subsets of $M \ominus B$ is {\em freely independent} in $M$ with respect to $\rE_B$ if $\rE_B(x_1 \cdots x_k)=0$ for every $k \geq 1$, $x_1 \in X_{\varepsilon_1}, \dots, x_k \in X_{\varepsilon_k}$ with $\varepsilon_1 \neq \cdots \neq \varepsilon_k$ in $I$. We say that a family $(M_i)_{i \in I}$ of intermediate von Neumann subalgebras $B \subset M_i \subset M$ is {\em freely independent} in $M$ with respect to $\rE_B$ if the family of subsets $(M_i \ominus B)_{i \in I}$ is freely independent in $M$ with respect to $\rE_B$. In this case, we denote by $\ast_{B, i \in I} (M_i, \tau_i) = \bigvee_{i \in I} M_i \subset M$ the tracial amalgamated free product von Neumann algebra where $\tau_i = \tau|_{M_i}$ for every $i \in I$.

\subsection*{Main results}
Let $I$ be an at most countable index set such that $2 \leq | I | \leq +\infty$. Let $(M_i, \tau_i)_{i \in I}$ be a family of tracial von Neumann algebras with a common von Neumann subalgebra $(B, \tau_0)$ such that for every $i \in I$, we have $\tau_i|_B = \tau_0$. Denote by $(M, \tau) = \ast_{B, i \in I} (M_i, \tau_i)$ the tracial amalgamated free product von Neumann algebra. Let $\mathcal U$ be a nonprincipal ultrafilter on $\N$. Denote by $(M^{\mathcal U}, \tau^{\mathcal U})$ the tracial ultraproduct von Neumann algebra. Simply denote by $\rE_B : M \to B$ (resp.\! $\rE_{B^{\mathcal U}} : M^{\mathcal U} \to B^{\mathcal U}$) the unique trace-preserving faithful normal conditional expectation.

Denote by $\mathscr W \subset M$ the linear span of $B$ and of all the reduced words in $M$ of the form $w = w_1 \cdots w_n$, with $n \geq 1$, $w_j \in M_{\varepsilon_j} \ominus B$ for every $j \in \{1, \dots, n\}$, and $\varepsilon_1, \dots, \varepsilon_n \in I$ such that $\varepsilon_1 \neq \cdots \neq \varepsilon_n$. For every $i \in I$, denote by $\mathscr W_i \subset M \ominus B$  the linear span of all the reduced words in $M$ whose first and last letter lie in $M_i \ominus B$. Moreover, denote by $P_{\mathscr W_i} : \rL^2(M) \to \rL^2(\mathscr W_i)$ the corresponding orthogonal projection. By construction, the family $(\mathscr W_i)_{i \in I}$ is freely independent in $M$ with respect to $\rE_B$. Our first main result is an extension of this fact to the ultraproduct framework.

\begin{letterthm}\label{thm-main-result}
Keep the same notation as above. For every $i \in I$, denote by $\mathbf X_i$ the set of all the elements $x = (x_n)^{\mathcal U} \in M^{\mathcal U} \ominus B^{\mathcal U}$ such that $\lim_{n \to \mathcal U} \|x_n - P_{\mathscr W_i}(x_n)\|_2 = 0$. 

Then the family $(\mathbf X_i)_{i \in I}$ is freely independent in $M^{\mathcal U}$ with respect to $\rE_{B^{\mathcal U}}$. 
\end{letterthm}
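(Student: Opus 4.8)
The plan is to reduce the statement to an $\rL^2$-estimate that is verified via Mei--Ricard's $\rL^p$-boundedness of radial Fourier multipliers on amalgamated free products. First I would fix $k \geq 1$, indices $\varepsilon_1 \neq \cdots \neq \varepsilon_k$ in $I$, and elements $x^{(1)} \in \mathbf X_{\varepsilon_1}, \dots, x^{(k)} \in \mathbf X_{\varepsilon_k}$, writing $x^{(j)} = (x^{(j)}_n)^{\mathcal U}$; by definition of $\mathbf X_{\varepsilon_j}$ we may, after replacing $x^{(j)}_n$ by $P_{\mathscr W_{\varepsilon_j}}(x^{(j)}_n)$ at the cost of an error vanishing along $\mathcal U$, assume that each $x^{(j)}_n \in \mathscr W_{\varepsilon_j}$ (a word starting and ending with a letter from $M_{\varepsilon_j} \ominus B$) and that $\rE_B(x^{(j)}_n) = 0$ for all $n$. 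The goal is then to show that $\lim_{n \to \mathcal U} \| \rE_B(x^{(1)}_n \cdots x^{(k)}_n) \|_2 = 0$. Since $\|\rE_B(\,\cdot\,)\|_2 \leq \|\,\cdot\,\|_2$, it suffices to prove the stronger claim that the product $x^{(1)}_n \cdots x^{(k)}_n$ becomes, in $\rL^2(M)$-norm along $\mathcal U$, indistinguishable from a vector with no component in $B$ — more precisely, that $\| \rE_B(x^{(1)}_n \cdots x^{(k)}_n)\|_2$ is controlled by $\prod_j \|x^{(j)}_n\|_2$ times a quantity depending on how much ``cancellation'' occurs at the junctions.

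The key mechanism is the following: when we multiply $x^{(1)}_n \cdots x^{(k)}_n$ and expand in reduced words, a contribution to $B$ can only survive through complete cancellation at every junction between consecutive factors $x^{(j)}_n$ and $x^{(j+1)}_n$; because $\varepsilon_j \neq \varepsilon_{j+1}$, the last letter of a word in $\mathscr W_{\varepsilon_j}$ lies in $M_{\varepsilon_j}\ominus B$ and the first letter of a word in $\mathscr W_{\varepsilon_{j+1}}$ lies in $M_{\varepsilon_{j+1}}\ominus B$, so these two letters never merge and the only way to reduce length is through successive inner cancellations driven from the junction inward. The point is that such deep cancellation forces the relevant ``projected'' pieces of $x^{(j)}_n$ onto short words, and this projection is exactly a radial (length-)Fourier multiplier on $M$. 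Concretely, I would introduce, for each $\ell \geq 0$, the projection $Q_\ell$ onto the closed span of reduced words of length $\leq \ell$ (together with $B$), note that $Q_\ell$ is the Fourier multiplier with symbol $\mathbf 1_{[0,\ell]}$ of the length function, and invoke \cite{MR16} to get that $Q_\ell$ is bounded on $\rL^p(M)$ for all $1 < p < \infty$ with norm \emph{independent of $\ell$} (or with at most logarithmic growth, which is enough). Interpolating between $\rL^2$ and these $\rL^p$ bounds, one shows that for a word $x^{(j)}_n \in \mathscr W_{\varepsilon_j}$ the mass that can participate in length-$\ell$ cancellation is $o(1)$ as $\ell$ stays bounded while $\|x^{(j)}_n\|_2$ is controlled and $x^{(j)}_n \perp (\text{short words})$ — but here I need to be careful, since a priori $x^{(j)}_n$ need not be orthogonal to short words.

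This is where the ultraproduct hypothesis must be used more cleverly, and it is the main obstacle. The resolution is a \emph{bootstrapping / telescoping} argument: decompose each $x^{(j)}_n = Q_L(x^{(j)}_n) + (1 - Q_L)(x^{(j)}_n)$ for a large fixed cutoff $L$, expand the product $x^{(1)}_n \cdots x^{(k)}_n$ into $2^k$ terms, and estimate each. The terms containing at least one ``long'' factor $(1-Q_L)(x^{(j)}_n)$ are handled by a Haagerup-type inequality / the free product word-length estimate combined with the Mei--Ricard bound: their contribution to $B$ is small because a long word needs to cancel against its neighbors over a long distance, and the operator norm of the relevant ``junction contraction'' decays in $L$ uniformly in $n$ (this is precisely the quantitative input extracted from the $\rL^p$-multiplier bounds, via the standard estimate $\|\rE_B(a b)\|_2 \le \|a\|_\infty \|b\|_2$ refined by bounding the $\rL^p$ norm of the truncated product). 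The single remaining term $Q_L(x^{(1)}_n)\cdots Q_L(x^{(k)}_n)$ involves only words of length $\leq L$; here I would argue that, for fixed $L$, the map sending a bounded tuple of length-$\leq L$ words to $\rE_B$ of their product is a noncommutative polynomial of bounded degree, hence $\| \cdot \|_2$-continuous on bounded sets, so passing to the ultraproduct we may replace $Q_L(x^{(j)}_n)$ by $Q_L(x^{(j)})$ computed in $M^{\mathcal U}$; but $Q_L(x^{(j)})$ is the $\rL^2$-limit along $\mathcal U$ of $Q_L P_{\mathscr W_{\varepsilon_j}}(x^{(j)}_n)$, and since $x^{(j)} \in \mathbf X_{\varepsilon_j} \subset M^{\mathcal U}\ominus B^{\mathcal U}$ lies in (the ultraproduct version of) $\mathscr W_{\varepsilon_j}$, the free independence of the \emph{original} families $(\mathscr W_i)_{i\in I}$ in $M$ — which passes to finite-length truncations verbatim — kills this term up to a controlled error. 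Letting first $n \to \mathcal U$ and then $L \to \infty$ yields $\lim_{n\to\mathcal U}\|\rE_B(x^{(1)}_n\cdots x^{(k)}_n)\|_2 = 0$, which is exactly the freeness of $(\mathbf X_i)_{i\in I}$ with respect to $\rE_{B^{\mathcal U}}$. The delicate point throughout is making the ``junction contraction'' estimate genuinely uniform in $n$ and quantitative in $L$ — this is the crux, and it is exactly what the $\rL^p$-boundedness (for all $p$, not just $p=2$) in \cite{MR16} buys us, because the naive $\rL^2$ argument loses a factor growing with the number of junctions.
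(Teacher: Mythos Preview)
Your proposal has a genuine gap at the very first step, and the rest of the argument does not recover from it. You write that ``after replacing $x^{(j)}_n$ by $P_{\mathscr W_{\varepsilon_j}}(x^{(j)}_n)$ at the cost of an error vanishing along $\mathcal U$, [we may] assume that each $x^{(j)}_n \in \mathscr W_{\varepsilon_j}$.'' But $P_{\mathscr W_{\varepsilon_j}}(x^{(j)}_n)$ lies only in $\rL^2(\mathscr W_{\varepsilon_j})$, not in $\mathscr W_{\varepsilon_j}$, and crucially it is \emph{not} uniformly $\|\cdot\|_\infty$-bounded. Once you make this replacement for all $j$, the product $P_{\mathscr W_{\varepsilon_1}}(x^{(1)}_n)\cdots P_{\mathscr W_{\varepsilon_k}}(x^{(k)}_n)$ lives only in $\rL^{2/k}$ via H\"older, which for $k\geq 3$ is not even a normed space, and $\rE_B$ makes no sense there. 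Moreover, the ``error vanishing along $\mathcal U$'' is an $\rL^2$-error; to propagate it through a $k$-fold product you need the remaining factors to be bounded in complementary $\rL^p$-norms, and after the replacement they are not. The same problem recurs with your length cutoffs: $Q_L(x^{(j)}_n)$ is again only an $\rL^2$-vector, so the ``noncommutative polynomial continuous on bounded sets'' argument does not apply since you are not on an $\|\cdot\|_\infty$-bounded set.

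The paper's proof solves exactly this obstruction, and the mechanism is different from the length-truncation/junction-contraction picture you sketch. One keeps the original $x_{j,n}$ (which \emph{are} $\|\cdot\|_\infty$-bounded) and uses Mei--Ricard to control $P_{\mathscr W_{\varepsilon_j}}(x_{j,n})$ not in $\rL^2$ but in $\rL^p$ for large $p$. Fixing $1<r<2$ and choosing $p$ with $\tfrac1r=\tfrac12+\tfrac{k-1}{p}$, the generalized H\"older inequality gives
\[
\bigl\|x_{1,n}\cdots x_{k,n}-P_{\varepsilon_1}(x_{1,n})\cdots P_{\varepsilon_k}(x_{k,n})\bigr\|_r
\lesssim \max_j \|x_{j,n}-P_{\varepsilon_j}(x_{j,n})\|_2 \to 0,
\]
because each of the $k-1$ other factors in every cross term is bounded in $\rL^p$. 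A second H\"older step (with $q=kr$) replaces each $P_{\varepsilon_j}(x_{j,n})$ by an honest element $w_{j,n}\in\mathscr W_{\varepsilon_j}$ close in $\rL^q$; now $\rE_B(w_{1,n}\cdots w_{k,n})=0$ by algebraic freeness, and since $\rE_B$ is $\rL^r$-contractive, one concludes $\|\rE_{B^{\mathcal U}}(x_1\cdots x_k)\|_r=0$. No length truncation or ``junction contraction'' estimate is needed; the entire weight is carried by the $\rL^p$-boundedness of $P_{\mathscr W_i}$ for a single well-chosen $p>2$ together with H\"older.
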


The proof of Theorem \ref{thm-main-result} relies on Mei--Ricard's results \cite{MR16} showing that the canonical projection $P_{\mathscr W_i} : \mathscr W \to \mathscr W_i$ extends to a completely bounded operator $P_{\mathscr W_i} : \rL^p(M) \to \rL^p(\mathscr W_i)$ for every $p \in (1, +\infty)$. In particular, we exploit $\rL^p$-boundedness of $P_{\mathscr W_i} : \rL^p(M) \to \rL^p(\mathscr W_i)$ for every $2 \leq p < +\infty$. Theorem \ref{thm-main-result} is a novel application of noncommutative $\rL^p$-spaces to the structure theory of tracial von Neumann algebras.

It follows from Popa's asymptotic orthogonality property \cite{Po83} (see Lemma \ref{lem-commutant} below) that for every $i \in I$ and every unitary $u \in \mathscr U(M_i^{\mathcal U})$ such that $\rE_{B^{\mathcal U}}(u^k) = 0$ for every $k \in \Z \setminus \{0\}$, if $x \in \{u\}' \cap M^{\mathcal U}$ and $\rE_{B^{\mathcal U}}(x) = 0$, then $x \in \mathbf X_i$. In particular, in the case $B = \C 1$, Theorem \ref{thm-main-result} implies the following

\begin{letterthm}\label{cor-rel-comm}
    Assume that $B= \C 1$. For every $i\in I$, let $u_i \in \mathscr U(M_i^{\mathcal U})$ be a Haar unitary.
    
    Then the family $(\{u_i\}' \cap M^{\mathcal U})_{i\in I}$ is freely independent in $M^{\mathcal U}$ with respect to $\tau^{\mathcal U}$.
\end{letterthm}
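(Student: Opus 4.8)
The plan is to deduce Theorem~\ref{cor-rel-comm} directly from Theorem~\ref{thm-main-result}, using the hint already given in the excerpt about Popa's asymptotic orthogonality property. Fix Haar unitaries $u_i \in \mathscr U(M_i^{\mathcal U})$ for each $i \in I$; since $B = \C 1$ here, ``Haar unitary'' means $\tau^{\mathcal U}(u_i^k) = 0$ for every $k \in \Z \setminus \{0\}$. The first step is to observe that it suffices to show $\{u_i\}' \cap M^{\mathcal U} \ominus \C 1 \subset \mathbf X_i$ for each $i$: indeed, if this containment holds, then freeness of the family $(\{u_i\}' \cap M^{\mathcal U})_{i\in I}$ is an immediate consequence of freeness of $(\mathbf X_i)_{i \in I}$ given by Theorem~\ref{thm-main-result}, since the defining alternating-product condition $\rE_{\C 1}(x_1 \cdots x_k) = \tau^{\mathcal U}(x_1 \cdots x_k) = 0$ for $x_j \in \{u_{\varepsilon_j}\}' \cap M^{\mathcal U} \ominus \C 1$ with $\varepsilon_1 \neq \cdots \neq \varepsilon_k$ is exactly a special case of the same condition for the $\mathbf X_i$.

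The second, and genuinely substantive, step is the containment $\{u_i\}' \cap M^{\mathcal U} \ominus \C 1 \subset \mathbf X_i$, which is where Popa's asymptotic orthogonality property enters. Recall that $\mathbf X_i$ consists of elements $x = (x_n)^{\mathcal U} \in M^{\mathcal U} \ominus \C 1$ with $\lim_{n \to \mathcal U}\|x_n - P_{\mathscr W_i}(x_n)\|_2 = 0$, i.e.\ elements whose representing sequences asymptotically live in $\rL^2(\mathscr W_i)$. The claim to establish — which I expect is essentially Lemma~\ref{lem-commutant} referenced in the excerpt — is that if $u \in \mathscr U(M_i^{\mathcal U})$ is a Haar unitary and $x \in \{u\}' \cap M^{\mathcal U}$ with $\tau^{\mathcal U}(x) = 0$, then $x \in \mathbf X_i$. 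Writing $\rL^2(M) = \rL^2(\mathscr W_i) \oplus \rL^2(\mathscr W_i)^\perp$, one decomposes a representing sequence $x_n = a_n + b_n$ with $a_n \in \rL^2(\mathscr W_i)$ and $b_n \perp \rL^2(\mathscr W_i)$; the goal is $\lim_{n \to \mathcal U}\|b_n\|_2 = 0$. The mechanism is that $\mathscr W_i^\perp$ (inside $M \ominus \C 1$) is spanned by reduced words whose first or last letter avoids $M_i \ominus B$, and on such words multiplication by an element of $M_i \ominus \C 1$ on the appropriate side is length-additive, hence isometric and taking the subspace into an orthogonal complement of itself; this is precisely Popa's asymptotic orthogonality: $u^{m} b_n$ and $b_n u^{m}$ become asymptotically orthogonal to $b_n$ (indeed to each other for distinct powers). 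Combining $[u, x] = 0$ in $M^{\mathcal U}$ with the Haar condition on $u$ then forces the $\rL^2$-mass of $b_n$ to vanish along $\mathcal U$. Since this is stated as Lemma~\ref{lem-commutant} and attributed to \cite{Po83}, I would simply invoke it rather than reprove it.

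The main obstacle, therefore, is not in this short derivation at all but is entirely absorbed into Theorem~\ref{thm-main-result} and Lemma~\ref{lem-commutant}: the $\rL^p$-boundedness input of Mei--Ricard \cite{MR16} needed for the former, and the asymptotic orthogonality estimate for the latter. Granting both, Theorem~\ref{cor-rel-comm} is a two-line consequence. One small point to be careful about is that the unitaries $u_i$ are given in $M_i^{\mathcal U}$ rather than in $M_i$, so Lemma~\ref{lem-commutant} must be the ultraproduct version (a unitary with representing sequence $u_i = (u_{i,n})^{\mathcal U}$, $u_{i,n} \in \mathscr U(M_i)$); the hypothesis $\rE_{B^{\mathcal U}}(u_i^k) = 0$ for all $k \neq 0$ is exactly what makes the asymptotic orthogonality argument go through at the level of representing sequences, and in the case $B = \C1$ this is just the statement that each $u_i$ is a Haar unitary in $M_i^{\mathcal U}$. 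I would also remark that, in fact, the same proof shows more: the relative commutants $\{u_i\}' \cap M^{\mathcal U}$ can be replaced by any von Neumann subalgebras of $M^{\mathcal U} \ominus \C1$ (together with $\C 1$) contained in $\mathbf X_i$.
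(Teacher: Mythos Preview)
Your proposal is correct and follows exactly the paper's approach: invoke Lemma~\ref{lem-commutant} to obtain the containment $\{u_i\}' \cap (M^{\mathcal U} \ominus \C 1) \subset \mathbf X_i$ for each $i$, and then apply Theorem~\ref{thm-main-result}. This is precisely how the paper derives Theorem~\ref{cor-rel-comm} (see the paragraph between Theorems~\ref{thm-main-result} and~\ref{cor-rel-comm}, and the proof of Theorem~\ref{cor-main-result}).
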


Assume that for every $i \in I$,  $M_i$ is a diffuse abelian von Neumann algebra so that $M_i^{\mathcal U} \subset \{u_i\}' \cap M^{\mathcal U}$. Then Theorem \ref{cor-rel-comm} can be regarded as a far-reaching generalization of the fact that the family $(M_i^{\mathcal U})_{i \in I}$ is freely independent in $M^{\mathcal U}$ with respect to $\tau^{\mathcal U}$.

In the case $I = \{1, 2\}$, we also obtain the following variation of Theorem \ref{thm-main-result}. 

\begin{letterthm}\label{thm-main-result-bis}
Assume that $I = \{1, 2\}$. Keep the same notation as above. Let $\mathbf Y_1 \subset \mathbf X_1$ be a subset with the property that $a \mathbf Y_1 b \subset \mathbf X_1$ for all $a, b \in M_1$.  

Then the sets $\mathbf Y_1$ and $M \ominus M_1$ are freely independent in $M^{\mathcal U}$ with respect to $\rE_{B^{\mathcal U}}$.
\end{letterthm}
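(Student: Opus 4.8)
The plan is to reduce the statement to Theorem~\ref{thm-main-result} by a telescoping/induction argument on the length of an alternating word, the only new feature being that elements of $\mathbf Y_1$ are sandwiched between arbitrary elements of $M_1$ rather than only between elements of $M_1 \ominus B$. So suppose we are given an alternating product
\[
z = z_1 z_2 \cdots z_m, \qquad z_j \in \mathbf Y_1 \text{ or } z_j \in M \ominus M_1,
\]
with consecutive $z_j$ lying in alternating sets, and we must show $\rE_{B^{\mathcal U}}(z) = 0$. First I would record the easy structural facts: $M \ominus M_1$ is an $M_1$-$M_1$-bimodule (indeed $\rE_{M_1}$ is an $M_1$-bimodule map, so $a(M\ominus M_1)b \subset M \ominus M_1$ for $a,b \in M_1$), and by hypothesis $a\mathbf Y_1 b \subset \mathbf X_1$ for $a,b \in M_1$; also $\mathbf X_1 \subset M^{\mathcal U} \ominus B^{\mathcal U}$ and $M\ominus M_1 \subset M^{\mathcal U}\ominus B^{\mathcal U}$, and $\mathbf X_1 \perp (M \ominus M_1)$ in $\rL^2$ because for $x \in \mathbf X_1$ one has $\rE_{M_1^{\mathcal U}}(x) = \lim_{n\to\mathcal U}\rE_{M_1^{\mathcal U}}(P_{\mathscr W_1}(x_n))$, which vanishes since $\mathscr W_1 \cap M_1 = M_1 \ominus B$ forces $P_{\mathscr W_1}$-images to be $\rE_{M_1}$-orthogonal to... — more precisely, every reduced word in $\mathscr W_1$ has length $\geq 1$ with first and last letter outside $B$, hence lies in $M \ominus M_1$, so $P_{\mathscr W_1}(\rL^2(M)) \subset \rL^2(M)\ominus \rL^2(M_1)$ and thus $\mathbf X_1 \perp (M\ominus M_1)$. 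This last observation means $\mathbf X_1$ already behaves like ``$\mathbf X_1 \ominus M_1$''.

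Next I would peel off, from the left, a maximal initial block of consecutive letters that come from $M_1$: concretely, look at the first occurrence of a $\mathbf Y_1$-letter in $z$ (if there is none, $z \in \langle M \ominus M_1\rangle$ which by ordinary freeness of $M_1$ and $M_2$ over $B$ — equivalently Theorem~\ref{thm-main-result} with all letters of type~$2$ — already has $\rE_{B^{\mathcal U}}(z)=0$, after possibly moving an $M_1\ominus B$ prefix past $\rE_{B^{\mathcal U}}$; here I'd use that $\mathbf X_1$ contains $M_1 \ominus B$). So write $z = w\, y\, z'$ where $w \in M \ominus M_1$ (or $w$ empty), $y \in \mathbf Y_1$, and $z'$ is the remaining alternating tail starting with an $M\ominus M_1$-letter (or empty). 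The point of the hypothesis ``$a\mathbf Y_1 b \subset \mathbf X_1$'' is exactly to absorb $M_1$-valued conditional expectations that appear when one decomposes the neighbouring $M\ominus M_1$-letters. The core computation is: for $a = \rE_{M_1}(w) \in M_1$ — no wait, $w \in M\ominus M_1$ has $\rE_{M_1}(w)=0$; instead the relevant decomposition is of $z'$'s first letter or of products. I would therefore argue by a double induction: decompose each $M\ominus M_1$-letter $z_j$ adjacent to a $\mathbf Y_1$-letter using the Fourier-type decomposition of $M\ominus M_1$ into $\bigvee$ of reduced-word subspaces whose outer letters are of type $1$ or type $2$; the type-$1$-on-the-inside pieces get absorbed into the neighbouring $\mathbf Y_1$-letter via $a\mathbf Y_1 b\subset \mathbf X_1$, strictly shortening the word, while the type-$2$-on-the-inside pieces produce genuinely alternating words in $\mathbf X_1$ and $M\ominus M_1$ of the same length but now with all interfaces ``rigid'', to which Theorem~\ref{thm-main-result} applies directly. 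One has to check the $\rL^2$-convergence bookkeeping survives these finite linear combinations, which is routine since everything in sight is $\rL^2$-bounded and the ultralimits distribute over finite sums.

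The step I expect to be the main obstacle is making the ``absorb the $M_1$-part into $\mathbf Y_1$'' move rigorous at the level of the ultraproduct representatives: an element $y = (y_n)^{\mathcal U} \in \mathbf Y_1$ satisfies $\lim_{n\to\mathcal U}\|y_n - P_{\mathscr W_1}(y_n)\|_2 = 0$, and when we multiply on the left/right by $a, b\in M_1$ and then want to land in $\mathbf X_1$ again we are simply invoking the hypothesis, so that is fine — but the subtlety is that after decomposing an adjacent $M\ominus M_1$ letter $z_{j} = \sum_{\text{finite}} (\text{word with inner letter in }M_1\ominus B) + (\text{rest})$, the ``inner letter in $M_1\ominus B$'' is a fixed element of $M_1$, hence an honest scalar-ultraproduct-constant sequence, so left/right multiplication by it does preserve $\mathbf X_1$ by hypothesis; meanwhile the outer letters of that word piece are in $M_{\varepsilon}\ominus B$ with $\varepsilon\neq 1$, keeping the alternation. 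Verifying that the recursion terminates — each application either shortens $m$ or converts a ``soft'' $\mathbf Y_1$/$(M\ominus M_1)$ alternation into a ``hard'' $\mathbf X_1$/$(M\ominus M_1)$ alternation handled by Theorem~\ref{thm-main-result}, with no infinite regress — is the bookkeeping heart of the argument, but it is purely combinatorial once the two reduction moves above are set up.
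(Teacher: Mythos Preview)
Your core strategy is correct and is essentially the paper's: reduce to Theorem~\ref{thm-main-result} by absorbing the outer $M_1$-pieces of each $M\ominus M_1$-letter into the neighbouring $\mathbf Y_1$-letters via the hypothesis $a\mathbf Y_1 b\subset\mathbf X_1$, leaving letters in $\mathscr W_2\subset\mathbf X_2$. The paper carries this out in a single clean step rather than a recursion: it observes that every reduced word in $M\ominus M_1$ already lies in $M_1\mathscr W_2 M_1$, so by Kaplansky's density theorem each $M\ominus M_1$-letter admits a \emph{uniformly $\|\cdot\|_\infty$-bounded} approximation by elements of $M_1\mathscr W_2 M_1$; then an alternating word in $\mathbf Y_1$ and $M_1\mathscr W_2 M_1$ rewrites, after sliding the $M_1$-factors onto the neighbours, as a linear combination of alternating words in $M_1\mathbf Y_1 M_1\cup(M_1\ominus B)\subset\mathbf X_1$ and $\mathscr W_2\subset\mathbf X_2$, to which Theorem~\ref{thm-main-result} applies. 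No induction on word length is needed, and nothing is ``strictly shortened''.

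Two points in your write-up need correction. First, your aside that $\mathbf X_1\perp(M\ominus M_1)$ is false: $\mathscr W_1$ contains reduced words of length $\geq 3$ (e.g.\ $a_1 b a_2$ with $a_i\in M_1\ominus B$, $b\in M_2\ominus B$), and these lie in $M\ominus M_1$, so $\rL^2(\mathscr W_1)\not\subset\rL^2(M)\ominus\rL^2(M_1)$; moreover the implication you draw from the (false) inclusion points the wrong way. Fortunately you never use this claim. Second, and more substantively, your approximation step is not ``routine because everything is $\rL^2$-bounded'': when you replace several $M\ominus M_1$-letters simultaneously by reduced-word approximants and want the product to converge in $\|\cdot\|_2$, you need the approximants to be $\|\cdot\|_\infty$-bounded (so that the telescoping estimate $\|z_1\cdots z_m - z_1'\cdots z_m'\|_2\leq\sum_j\|z_j-z_j'\|_2\prod_{l\neq j}\|z_l^{(\prime)}\|_\infty$ is usable). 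This is exactly what Kaplansky's density theorem provides, and the paper invokes it explicitly.
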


A typical example of a subset $\mathbf Y_1 \subset \mathbf X_1$ with the property that $a \mathbf Y_1 b \subset \mathbf X_1$ or all $a, b \in M_1$ is given by $\mathbf Y_1 = A' \cap (M^{\mathcal U} \ominus M_1^{\mathcal U})$, where $A \subset M_1$ is a von Neumann subalgebra such that $A \npreceq_{M_1} B$ (see Lemmas \ref{lem-intertwining} and \ref{lem-commutant-bis}).

In the case $M = B \rtimes \F_n = (B \rtimes \Z) \ast_B \cdots \ast_B (B \rtimes \Z) = M_1 \ast_B \cdots \ast_B M_n$, Popa showed in \cite[Lemma 2.1]{Po83} that for the canonical Haar unitary $u \in \rL(\Z) \subset M_1$, the sets $\{u\}' \cap (M^{\mathcal U} \ominus M_1^{\mathcal U})$ and $M \ominus M_1$ are $2$-independent in $M^{\mathcal U}$ with respect to $\rE_{B^{\mathcal U}}$ (see also \cite{HU15} for the free product case). Letting $\mathbf Y_1 = \{u\}' \cap (M^{\mathcal U} \ominus M_1^{\mathcal U})$, Theorem \ref{thm-main-result-bis} can be regarded as a generalization and a strengthening of Popa's result.

In the case $I = \{1, 2\}$ and $B = \C 1$, we exploit Mei--Ricard's results \cite{MR16} to obtain the following indecomposability result in $M^{\mathcal U}$.

\begin{letterthm}\label{31infinity}
Assume that $I=\{1,2\}$ and $B=\mathbb C1$. Let $u_1\in\mathscr U(M_1^{\mathcal U})$ be a Haar unitary and $u_2\in\mathscr U(M_2^{\mathcal U})$ such that $\tau^{\mathcal U}(u_2)=\tau^{\mathcal U}(u_2^2)=0$.

Then there do not exist $v_1,v_2\in\mathscr U(M^{\mathcal U})$ such that $\tau^{\mathcal U}(v_1)=\tau^{\mathcal U}(v_1^2)=\tau^{\mathcal U}(v_2)=0$ and
 $[u_1,v_1]=[v_1,v_2]=[v_2,u_2]=0$. 
 \end{letterthm}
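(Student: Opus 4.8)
The plan is to argue by contradiction. Suppose $v_1,v_2\in\mathscr U(M^{\mathcal U})$ satisfy $\tau^{\mathcal U}(v_1)=\tau^{\mathcal U}(v_1^2)=\tau^{\mathcal U}(v_2)=0$ and $[u_1,v_1]=[v_1,v_2]=[v_2,u_2]=0$. Since $u_1$ is a Haar unitary in $M_1^{\mathcal U}$, $v_1\in\{u_1\}'\cap M^{\mathcal U}$, and $\rE_{B^{\mathcal U}}(v_1)=\tau^{\mathcal U}(v_1)1=0$, Lemma \ref{lem-commutant} gives $v_1\in\mathbf X_1$. Because $v_1$ and $v_2$ are trace-zero unitaries, expanding $\|[v_1,v_2]\|_2^2=\langle v_1v_2-v_2v_1,\,v_1v_2-v_2v_1\rangle$ and using traciality yields
\begin{equation*}
\|[v_1,v_2]\|_2^2=2-2\operatorname{Re}\tau^{\mathcal U}(v_1v_2v_1^*v_2^*).
\end{equation*}
Since $[v_1,v_2]=0$, this forces $\tau^{\mathcal U}(v_1v_2v_1^*v_2^*)=1$; hence it suffices to prove, under the stated hypotheses, that $\tau^{\mathcal U}(v_1v_2v_1^*v_2^*)=0$ — and this is the whole content of the theorem.

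Now decompose $v_2=d+v_2'$ with $d:=\rE_{M_2^{\mathcal U}}(v_2)\in M_2^{\mathcal U}$ and $v_2':=v_2-d\in M^{\mathcal U}\ominus M_2^{\mathcal U}$. As $[v_2,u_2]=0$ and $\rE_{M_2^{\mathcal U}}$ is $M_2^{\mathcal U}$-bimodular, both $d$ and $v_2'$ commute with $u_2$; moreover $\tau^{\mathcal U}(d)=\tau^{\mathcal U}(v_2)=0$, so $d\in M_2^{\mathcal U}\ominus\C1\subseteq\mathbf X_2$ and $\tau^{\mathcal U}(v_2')=0$. Since $v_1\in\mathbf X_1$ and $\tau^{\mathcal U}(d)=0$, both $v_1dv_1^*$ and $v_1^*dv_1$ again lie in $\mathbf X_1$ (conjugating the reduced-word expansion of $d$ by that of $v_1$ stays in the span of reduced words beginning and ending in $M_1\ominus\C1$; here Mei--Ricard's $\rL^p$-boundedness of $P_{\mathscr W_1}$ for $2\le p<+\infty$ is used to make sense of the relevant products, since $P_{\mathscr W_1}(v_{1,n})$ need not lie in $M$). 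Using that $\mathbf X_1$ and $\mathbf X_2$ are freely independent (Theorem \ref{thm-main-result}), the term $\tau^{\mathcal U}(v_1dv_1^*d^*)$ in the expansion of $\tau^{\mathcal U}(v_1v_2v_1^*v_2^*)$ along $v_2=d+v_2'$ vanishes, and every remaining term is (after cycling) of the form $\rE_{B^{\mathcal U}}(x_1y_1)$ or $\rE_{B^{\mathcal U}}(x_1y_1x_2y_2)$ with $x_1,x_2\in\mathbf X_1$ and $y_1,y_2\in\{v_2',(v_2')^*\}$. Hence the proof reduces to showing that $\{v_2',(v_2')^*\}$ and $\mathbf X_1$ are $2$-independent in $M^{\mathcal U}$ with respect to $\rE_{B^{\mathcal U}}$. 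This is a degree-$2$ refinement of Popa's asymptotic orthogonality property: for a \emph{Haar} $u_2$ one would get $v_2'\in\mathbf X_2$ from Lemma \ref{lem-commutant} and conclude by Theorem \ref{thm-main-result}, but here only $\tau^{\mathcal U}(u_2)=\tau^{\mathcal U}(u_2^2)=0$ is available — exactly enough to conjugate the $\mathbf X_1$-letters $x_j$ by $u_2$ and $u_2^2$ into $\mathbf X_2$ (keeping $v_2'$ in place via $[v_2',u_2]=0$) while controlling the scalar parts of $u_2^{\pm1},u_2^{\pm2}$ and of $v_1^{\pm1},v_1^{\pm2}$ that appear; this is where the hypotheses $\tau^{\mathcal U}(u_2)=\tau^{\mathcal U}(u_2^2)=0$ and $\tau^{\mathcal U}(v_1^2)=0$ enter, after which $\rE_{M_2^{\mathcal U}}(v_2')=0$ together with free independence of $\mathbf X_1$ and $\mathbf X_2$ forces the vanishing.

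The main obstacle is precisely this last step. Establishing the degree-$2$ asymptotic orthogonality for arbitrary $M_1,M_2$ — rather than only for $M_1=B\rtimes\Z$ as in \cite{Po83}, and without the Haar hypothesis that would permit averaging over arbitrarily high powers of $u_2$ — forces one to extract exactly the degree-$\le2$ information and to carry out the estimates in noncommutative $\rL^p$ rather than in $\rL^\infty$ (the Fourier projections $P_{\mathscr W_i}$ applied to representing sequences being unbounded), which is the one point at which Mei--Ricard's complete $\rL^p$-boundedness of the $P_{\mathscr W_i}$ from \cite{MR16}, combined with an interpolation argument, is indispensable (cf. the proof of Theorem \ref{thm-main-result-bis}). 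Once $\{v_2',(v_2')^*\}$ is shown to be $2$-independent from $\mathbf X_1$, we obtain $\tau^{\mathcal U}(v_1v_2v_1^*v_2^*)=0$, whence $\|[v_1,v_2]\|_2^2=2\neq0$, contradicting $[v_1,v_2]=0$, and the proof is complete.
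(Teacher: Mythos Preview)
Your reduction to showing $\tau^{\mathcal U}(v_1v_2v_1^*v_2^*)=0$ is fine, but the claimed $2$-independence of $\{v_2',(v_2')^*\}$ from $\mathbf X_1$ is not merely unproved---it is false in general under the hypotheses you allow yourself. The point is that your argument for this step uses only $[v_2',u_2]=0$, $\rE_{M_2^{\mathcal U}}(v_2')=0$, and $\tau^{\mathcal U}(u_2)=\tau^{\mathcal U}(u_2^2)=0$; it does \emph{not} use $[v_1,v_2]=0$. But those hypotheses do not force the $P_{1,1}$-component of $v_2'$ to vanish. Concretely, if $u_2^3=1$ and $b\in M_1\ominus\C$, then $a=b+u_2bu_2^*+u_2^*bu_2$ lies in $\{u_2\}'\cap(M^{\mathcal U}\ominus M_2^{\mathcal U})$ and has $P_{1,1}(a)=b\neq 0$, so $\tau^{\mathcal U}(b^*a)=\|b\|_2^2\neq 0$ with $b^*\in\mathbf X_1$; already $1$-independence fails. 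Your suggested device---conjugating the $\mathbf X_1$-letters by $u_2,u_2^2$ into $\mathbf X_2$ and then invoking free independence of $\mathbf X_1,\mathbf X_2$ together with $\rE_{M_2^{\mathcal U}}(v_2')=0$---cannot close this gap: after conjugation the letters sit in $\mathbf X_2$, but $v_2'$ is not known to lie in $\mathbf X_1$ (nor in $\mathbf X_2$), so Theorem~\ref{thm-main-result} does not apply.

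The paper's proof proceeds quite differently and uses $[v_1,v_2]=0$ in an essential, not merely bookkeeping, way. One decomposes $v_2$ along the four corners $P_{i,j}$ and shows, via a two-sided estimate (Lemma~\ref{almost-ortho} from $[v_1,v_2]=0$ and $v_1\in\mathbf X_1$, together with the symmetric inequality from $[u_2,v_2]=0$ and $u_2\in M_2^{\mathcal U}\ominus\C$), that the $P_{1,1}$ and $P_{2,2}$ components of $v_2$ vanish asymptotically; here the hypothesis $\tau^{\mathcal U}(v_1^2)=0$ is what kills both simultaneously. The remaining $P_{1,2}$ and $P_{2,1}$ components are then eliminated using $\tau^{\mathcal U}(u_2^2)=0$, yielding $v_2=0$, a contradiction. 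The commutation $[v_1,v_2]=0$ is precisely what links the $P_{1,1}$ and $P_{2,2}$ parts; there is no way to control $P_{1,1}(v_2)$ from the data on $u_2$ alone, which is exactly where your outline breaks down.
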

 
Another way to formulate Theorem \ref{31infinity} is as follows. Let $u_1\in\mathscr U(M_1^{\mathcal U})$ be a Haar unitary, $u_2\in\mathscr U(M_2^{\mathcal U})$ such that $\tau^{\mathcal U}(u_2)=\tau^{\mathcal U}(u_2^2)=0$ and $v_1\in\mathscr U(M^{\mathcal U})$ such that $[u_1, v_1] = 0$ and $\tau^{\mathcal U}(v_1)=\tau^{\mathcal U}(v_1^2)= 0$. Then we have $\{v_1, u_2\}' \cap M^{\mathcal U} = \C 1$. 
This generalizes the well-known fact (see e.g.\! \cite[Lemma 6.1]{Io12}) that $\{u_1,u_2\}' \cap M^{\mathcal U} = \C 1$.
In Section \ref{section-main-results}, we generalize Theorem \ref{31infinity} to arbitrary tracial amalgamated free product von Neumann algebras (see Theorem \ref{3infinity}).

Theorem \ref{31infinity} is new even in the case $M=\rL(C_1*C_2)$, where $C_1, C_2$ are cyclic groups with $|C_1|>1$ and $|C_2|>2$. In this case, $M=M_1 \ast M_2$, where $M_1=\rL(C_1), M_2=\rL(C_2)$.
Moreover, $M$ is an interpolated free group factor by \cite[Corollary 5.3]{Dy92}  and thus has positive $1$-bounded entropy, $h(M)>0$, in the sense of \cite{Ju05,Ha15}. 
By  \cite[Corollary 4.8]{Ha15} (see also \cite[Facts 2.4 and 2.9]{CIKE22}), if $u_1,u_2\in M$ are generating unitaries, then there are no Haar unitaries $v_1,v_2\in M^{\mathcal U}$ satisfying $[u_1,v_1]=[v_1,v_2]=[v_2,u_2]=0$. This fact was used in \cite{CIKE22} to construct two non-elementarily equivalent non-Gamma ${\rm II_1}$ factors.

Theorem \ref{31infinity} considerably strengthens this fact when $C_1=\mathbb Z$, $u_1\in \mathscr U(M_1)$, $u_2\in \mathscr U(M_2)$.   Unlike \cite{Ha15}, we cannot say anything about arbitrary generating unitaries $u_1$ and $u_2$, that do not belong to $M_1$ and $M_2$, respectively. On the other hand, while the free entropy methods from  \cite{Ha15} 
only rule out the existence of Haar unitaries $v_1,v_2 \in \mathscr U(M^{\mathcal U})$ satisfying $[u_1,v_1]=[v_1,v_2]=[v_2,u_2]=0$, Theorem \ref{31infinity} also excludes the existence of such finite order unitaries $v_1,v_2$ provided that $v_1,v_1^2,v_2$ have trace zero. %Note that by Corollary \trf{cor-rel-comm} we can drop the requirement that $v_2^2$ has trace zero if $C_2=\mathbb Z$.

Let $u_1\in\mathscr U(M_1^{\mathcal U})$ and $u_2\in\mathscr U(M_2^{\mathcal U})$ be as in Theorem \ref{31infinity}, and assume that $u_2^m=1$, for some $m>2$. Then $\{u_2\}'\cap M^{\mathcal U}$ has finite index in $M^{\mathcal U}$ and therefore, unlike in Theorem \ref{cor-rel-comm}, $\{u_1\}'\cap M^{\mathcal U}$ and $\{u_2\}'\cap M^{\mathcal U}$ are not freely independent in $M^{\mathcal U}$ with respect to $\tau^{\mathcal U}$. Instead, the proof of Theorem \ref{31infinity} relies on a subtler analysis of commuting unitaries belonging to $\{u_1\}'\cap M^{\mathcal U}$ and $\{u_2\}'\cap M^{\mathcal U}$. However, similarly to the proof of Theorem \ref{cor-rel-comm}, we make crucial use of Mei--Ricard's results \cite{MR16}.

We do not know if Theorem \ref{31infinity} holds if we remove the assumption that $\rE_{B^{\mathcal U}}(u_2^2)=0$. However, a standard diagonal argument implies the existence of $N\in\mathbb N$ such  that the assumption that  $\rE_{B^{\mathcal U}}(u_1^k)=0$, for every $k\in\mathbb Z\setminus\{0\}$, can be relaxed by assuming instead that $\rE_{B^{\mathcal U}}(u_1^k)=0$, for every $k\in\mathbb Z\setminus\{0\}$ with $|k|\leq N$.

\subsection*{Application to absorption in AFP von Neumann algebras}

We use Theorem \ref{thm-main-result-bis} to obtain a new absorption result for tracial amalgamated free product von Neumann algebras.

\begin{letterthm}\label{cor-amenable-absorption}
Assume that $I = \{1, 2\}$. Keep the same notation as above and assume that $M$ is separable. Let $P \subset M$ be a von Neumann subalgebra such that $P \cap M_1 \npreceq_{M_1} B$ and $P' \cap M^{\mathcal U} \npreceq_{M^{\mathcal U}} B^{\mathcal U}$. Then we have $P \subset M_1$. 
\end{letterthm}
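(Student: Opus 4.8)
The plan is to use Theorem \ref{thm-main-result-bis} to force $P$ into $M_1$ via a rigidity argument on the relative commutant. First I would set $A = P \cap M_1$ and observe that by hypothesis $A \npreceq_{M_1} B$; in particular $A$ is diffuse and, being a subalgebra of $M_1$, one can fix a unitary $u \in \mathscr U(A)$ which is a Haar unitary relative to $B$ (after passing to the ultrapower, using that $A \npreceq_{M_1} B$). The point of the assumption $P' \cap M^{\mathcal U} \npreceq_{M^{\mathcal U}} B^{\mathcal U}$ is that it provides a ``large'' chunk of the relative commutant of $P$ sitting outside $B^{\mathcal U}$; I would want to produce from it, via a standard maximality/averaging argument, an element $y \in P' \cap M^{\mathcal U}$ with $\rE_{B^{\mathcal U}}(y) = 0$ and $y \neq 0$. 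Since $y$ commutes with $A \subset M_1$, Lemma \ref{lem-commutant} (Popa's asymptotic orthogonality) together with the remark following Theorem \ref{thm-main-result} shows that $y \in \mathbf X_1$; more precisely one wants the set $\mathbf Y_1 := A' \cap (M^{\mathcal U} \ominus M_1^{\mathcal U})$ to satisfy $\mathbf Y_1 \subset \mathbf X_1$ and the absorption property $a \mathbf Y_1 b \subset \mathbf X_1$ for $a,b \in M_1$, exactly as indicated in the paragraph after Theorem \ref{thm-main-result-bis} (via Lemmas \ref{lem-intertwining} and \ref{lem-commutant-bis}, using $A \npreceq_{M_1} B$).

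Next I would apply Theorem \ref{thm-main-result-bis} with this $\mathbf Y_1$: the set $\mathbf Y_1$ and $M \ominus M_1$ are freely independent in $M^{\mathcal U}$ with respect to $\rE_{B^{\mathcal U}}$. The idea is then to derive a contradiction from the existence of a nonzero $P' \cap M^{\mathcal U}$-element that both commutes with all of $P$ and is orthogonal to $B^{\mathcal U}$, unless $P \subset M_1$. Concretely, suppose $P \not\subset M_1$. Then there is an element $p \in P$ with $\rE_{M_1}(p) = 0$, i.e.\ (after decomposing into reduced words) a nonzero contribution of $p$ lies in $M \ominus M_1$. Take $y \in \mathbf Y_1 \subset P' \cap M^{\mathcal U}$ nonzero as above; then $y$ commutes with $p$. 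Write out $\rE_{B^{\mathcal U}}$ of an alternating word built from $y, y^*$ and $p, p^*$ (or their $\rE_{M_1}$-orthogonal parts): free independence of $\mathbf Y_1$ and $M \ominus M_1$ forces such alternating moments to vanish, while the commutation relation $yp = py$ forces a nontrivial algebraic identity among these moments. Balancing the two — this is the computational heart — should yield $\|y\|_2 = 0$ or $\rE_{M_1}(p) = p$, i.e.\ a contradiction. I would model this step on the classical computation showing that a free Haar unitary has trivial relative commutant, but carried out at the level of the two sets $\mathbf Y_1$ and $M \ominus M_1$ rather than two subalgebras.

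The separability hypothesis on $M$ is used, as usual, to run the intertwining-by-bimodules dichotomy cleanly in $M^{\mathcal U}$ and to extract the Haar unitary / orthogonal-complement witnesses; I would invoke the standard facts relating $P' \cap M^{\mathcal U} \npreceq_{M^{\mathcal U}} B^{\mathcal U}$ to the existence of a net of unitaries in $P' \cap M^{\mathcal U}$ asymptotically orthogonal to $B^{\mathcal U}$ (Popa's criterion). The main obstacle I anticipate is precisely the passage from the qualitative statement ``$\mathbf Y_1$ and $M\ominus M_1$ are freely independent'' to the quantitative contradiction: one must ensure that the element $y$ extracted from $P' \cap M^{\mathcal U} \npreceq_{M^{\mathcal U}} B^{\mathcal U}$ genuinely lands in $\mathbf Y_1$ (hence in $\mathbf X_1$) — this requires $A = P \cap M_1 \npreceq_{M_1} B$ to be used not just to get a Haar unitary but to guarantee, via Lemma \ref{lem-commutant-bis}, that the whole relative commutant $A' \cap (M^{\mathcal U}\ominus M_1^{\mathcal U})$ is controlled by $P_{\mathscr W_1}$. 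Once that is in place, the freeness computation is a finite moment calculation and should close the argument.
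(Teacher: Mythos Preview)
Your proposal captures one key ingredient --- the application of Theorem~\ref{thm-main-result-bis} with $\mathbf Y_1 = A'\cap(M^{\mathcal U}\ominus M_1^{\mathcal U})$ --- but there is a genuine gap in how you produce the element $y$. From $P'\cap M^{\mathcal U}\npreceq_{M^{\mathcal U}} B^{\mathcal U}$ you only obtain elements (asymptotically) orthogonal to $B^{\mathcal U}$, not to $M_1^{\mathcal U}$; membership in $\mathbf Y_1$ requires $\rE_{M_1^{\mathcal U}}(y)=0$, which is strictly stronger. It can happen that $(P'\cap M^{\mathcal U})\cap(M^{\mathcal U}\ominus M_1^{\mathcal U})=\{0\}$ even though $P'\cap M^{\mathcal U}\npreceq B^{\mathcal U}$, and then your freeness computation never gets off the ground. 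In fact the paper's argument shows (under a contradiction hypothesis) precisely that $Q'\cap(qMq)^{\mathcal U}\preceq_{M^{\mathcal U}} M_1^{\mathcal U}$, so this case is not pathological but central.

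The paper's proof is structured as a nested dichotomy that your sketch is missing. One first passes to a maximal projection $z\in\mathscr Z(P'\cap M_1)$ with $Pz\subset zM_1z$, sets $Q=Pz^\perp$, and aims to show $Q\preceq_M M_1$ (which, together with \cite[Theorem 1.1]{IPP05} applied to $A\npreceq_{M_1}B$, forces the intertwining partial isometry into $M_1$ and contradicts maximality of $z$). To prove $Q\preceq_M M_1$, one assumes not, picks a sequence $(w_k)\subset\mathscr U(Q)$ with $\|\rE_{M_1}(x^*w_ky)\|_2\to 0$, and sets $\mathscr Q=Q'\cap(qMq)^{\mathcal U}$. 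Your freeness computation is then used, essentially as you describe, but with $v\in\mathscr U(\mathscr Q)$ a unitary satisfying $\rE_{M_1^{\mathcal U}}(v)=0$ and the \emph{sequence} $w_k$ in the role of your single $p$: one gets $\rE_{B^{\mathcal U}}(v(w_k-\rE_{M_1}(w_k))v^*(w_k-\rE_{M_1}(w_k))^*)=0$, and since $vw_k=w_kv$ and $\rE_{M_1}(w_k)\to 0$ this forces $\rE_{B^{\mathcal U}}(q)=0$, a contradiction. But this only rules out $\mathscr Q\npreceq M_1^{\mathcal U}$; the remaining case $\mathscr Q\preceq_{M^{\mathcal U}} M_1^{\mathcal U}$ together with $\mathscr Q\npreceq_{M^{\mathcal U}} B^{\mathcal U}$ requires a completely separate argument in the style of \cite[Lemma 9.5]{Io12}, using that the inclusion $M_1\subset M$ is mixing relative to $B$. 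This second half of the dichotomy is substantial and is absent from your plan.
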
 

Theorem \ref{cor-amenable-absorption} vastly generalizes Popa's seminal result \cite{Po83} that the generator masa $\text{L}(\langle a\rangle)$ is maximal amenable inside the free group factor $\text{L}(\mathbb F_2)=\text{L}(\langle a,b\rangle)$.  Specifically, it  extends several maximal amenability/Gamma absorption results.
 Theorem \ref{cor-amenable-absorption} generalizes \cite[Theorem A]{HU15} (see also \cite[Theorem A]{Ho14}) to arbitrary tracial amalgamated free product von Neumann algebras. As we observe in Remark \ref{rem-amenable}, if $P \subset M$ is an amenable von Neumann subalgebra such that $P \cap M_1 \npreceq_{M_1} B$, then we necessarily have $P' \cap M^{\mathcal U} \npreceq_{M^{\mathcal U}} B^{\mathcal U}$. Thus, Theorem \ref{cor-amenable-absorption} also yields a new proof of \cite[Main theorem]{BH16} in the setting of tracial amalgamated free product von Neumann algebras.
 
Let us point out that in the setting of tracial free products $M = M_1 \ast M_2$ of Connes-embeddable von Neumann algebras, the inclusion $M_1 \subset M$ satisfies a more general absorption property. Indeed, \cite[Theorem A]{HJNS19} shows that if $P \subset M$ is a von Neumann subalgebra such that $P \cap M_1$ is diffuse and has $1$-bounded entropy zero, then $P \subset M_1$. In the case $M = \rL(\F_n)$ is a free group factor, the aforementioned absorption property holds for {\em any} diffuse maximal amenable subalgebra $Q \subset M$, thanks to the recent resolution of the Peterson--Thom conjecture via random matrix theory \cite{BC22, BC23} and $1$-bounded entropy \cite{Ha15} (see also \cite{HJKE23}).

\subsection*{Application to continuous model theory of ${\rm II_1}$ factors}

We next present an application of Theorem \ref{cor-rel-comm} to the continuous model theory of ${\rm II_1}$ factors. A main theme in this theory,  initiated by Farah--Sherman--Hart in \cite{FHS11}, is to determine whether two given ${\rm II_1}$ factors $M, N$ are elementarily equivalent. By the continuous version of the Keisler--Shelah theorem this amounts to $M$ and  $N$ admitting isomorphic ultrapowers, $M^{\mathcal U}\cong N^{\mathcal V}$, for some ultrafilters $\mathcal U$ and $\mathcal V$ on arbitrary sets \cite{FHS11,HI02}.
It was shown in \cite{FHS11} that property Gamma and being McDuff are elementary properties, leading to three distinct elementary classes of ${\rm II_1}$ factors.
A fourth such elementary class was then provided in \cite{GH16}. The problem of determining the number of elementary classes of ${\rm II_1}$ factors was solved in \cite{BCI15}, where the continuum of non-isomorphic ${\rm II_1}$ factors constructed in \cite{Mc69} were shown to be pairwise non-elementarily equivalent. However, all the available techniques for distinguishing ${\rm II_1}$ factors up to elementary equivalence were based on central sequences. It thus remained open to construct any non-elementarily equivalent ${\rm II_1}$ factors which do not have any non-trivial central sequences, i.e., fail property Gamma.

This problem was solved by Chifan--Ioana--Kunnawalkam Elayavalli~in \cite{CIKE22} using a combination of techniques from Popa's deformation/rigidity theory and Voiculescu's free entropy theory. 
First, deformation/rigidity methods from \cite{IPP05} were used to construct a non-Gamma ${\rm II_1}$ factor $M$ via an iterative amalgamated free product construction. It was then shown that $M$ is not elementarily equivalent to any (necessarily non-Gamma) ${\rm II_1}$ factor $N$ having positive $1$-bounded entropy, $h(N)>0$, in the sense of Jung \cite{Ju05} and Hayes \cite{Ha15}.
Examples of ${\rm II_1}$ factors $N$ with $h(N)>0$ include the interpolated free group factors $\rL(\mathbb F_t)$, $1< t\leq\infty$, and, more generally, any tracial free product $N=N_1 \ast N_2$ of  diffuse Connes-embeddable von Neumann algebras. % i.e., embed into an ultrapower of the hyperfinite ${\rm II_1}$ factor. 
For additional examples of such ${\rm II}_1$ factors, see \cite[Fact 2.7]{CIKE22}.
However, the methods from \cite{CIKE22} could not distinguish $M$ up to elementary equivalence from  $N=N_1*N_2$, whenever $N_1$ or $N_2$ is a non-Connes-embeddable tracial von Neumann algebra (the existence of which has been announced in the preprint \cite{JNVWY20}). 
 In particular, since it is unclear if $M$ is Connes-embeddable, it remained open whether $M$ is elementarily equivalent to $M*\rL(\mathbb Z)$. 

Theorem \ref{cor-rel-comm} allows us to settle this problem for a variant of the ${\rm II_1}$ factor constructed in \cite{CIKE22}:
 
\begin{letterthm}\label{non-ee}
There exists a separable ${\rm II_1}$ factor $M$ which does not have property Gamma and that is not elementarily equivalent to $N=N_1*N_2$, for any diffuse tracial von Neumann algebras $(N_1,\tau_1)$ and $(N_2,\tau_2)$.
\end{letterthm}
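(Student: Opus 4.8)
The plan is to combine the construction of Chifan--Ioana--Kunnawalkam Elayavalli from \cite{CIKE22} with a lifting argument and Theorem \ref{cor-rel-comm}. Recall that the non-Gamma ${\rm II_1}$ factor $M$ in \cite{CIKE22} is built by an iterated amalgamated free product of the form $M = \bigvee_{n} P_n$, where at each stage one adjoins, over a common subalgebra, a pair of unitaries $u_n, v_n$ forced to commute; the key rigidity feature (coming from deformation/rigidity methods of \cite{IPP05}) is that $M$ contains a chain of Haar unitaries $a = a_0, a_1, \dots, a_N$ with $[a_{j-1}, a_j] = 0$ that cannot be ``shortened'', and this is what obstructs property Gamma. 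The idea is to engineer $M$ so that, in addition, any potential elementary embedding of $M$ into some $N^{\mathcal V}$ with $N = N_1 \ast N_2$ a free product of diffuse tracial von Neumann algebras would produce, inside $N^{\mathcal V}$, a configuration of commuting unitaries that Theorem \ref{cor-rel-comm} forbids.

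Concretely, I would proceed as follows. First, recall/prove the new lifting theorem alluded to in the abstract: given a free product $N = N_1 \ast N_2$ of diffuse tracial von Neumann algebras and Haar unitaries $u_1 \in \mathscr U(N_1)$, $u_2 \in \mathscr U(N_2)$, and given commuting pairs of unitaries in the ultrapower $N^{\mathcal V}$ that ``link'' $u_1$ to $u_2$ through a short chain, one can lift/rearrange these to show the relative commutants $\{u_1\}' \cap N^{\mathcal V}$ and $\{u_2\}' \cap N^{\mathcal V}$ are free (this is exactly Theorem \ref{cor-rel-comm}, applied in $N^{\mathcal V}$ in place of $M^{\mathcal U}$, noting that freeness of the $\mathbf X_i$ is an axiomatizable-type property stable under the relevant ultrapower manipulations). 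Second, observe that the elementary-equivalence invariant we extract is: \emph{in $M^{\mathcal U}$ there is a finite chain of Haar unitaries $w_0, w_1, \dots, w_k$ with consecutive ones commuting, $w_0$ and $w_k$ generating non-free relative commutants, and no chain of length $< k$ with the same endpoints}. The CIKE construction gives such a rigid chain in $M$; by the Keisler--Shelah theorem, if $M \equiv N$ then $M^{\mathcal U} \cong N^{\mathcal V}$, so this same chain lives in $N^{\mathcal V}$ with $w_0, w_k$ conjugate (inside $N^{\mathcal V}$) to $u_1, u_2$ respectively. Third, derive the contradiction: Theorem \ref{cor-rel-comm} forces $\{u_1\}' \cap N^{\mathcal V}$ and $\{u_2\}' \cap N^{\mathcal V}$ to be freely independent, hence the chain connecting them cannot have the minimal length prescribed by the rigidity of $M$ — freely independent subalgebras admit no such nontrivial ``bridge'' of commuting unitaries — contradicting $M \equiv N$.

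The execution requires care at two points. One must arrange that the chain length $k$ in the CIKE-type construction is chosen so that a free product $N = N_1 \ast N_2$ would produce a \emph{strictly shorter} linking chain (length $2$ or $3$, via the two relative commutants and their free position), thereby violating the non-shortenability built into $M$; this is a matter of adjusting the number of iterations in the amalgamated free product construction, exactly as in \cite{CIKE22}. One must also handle the reduction from an arbitrary free product decomposition $N = N_1 \ast N_2$ of diffuse von Neumann algebras to the presence of Haar unitaries $u_i \in N_i$: since $N_i$ is diffuse it contains a Haar unitary, and one checks that the generating/linking unitaries of $M$ correspond, under the isomorphism $M^{\mathcal U} \cong N^{\mathcal V}$, to elements whose relative commutants contain $\{u_1\}' \cap N^{\mathcal V}$ and $\{u_2\}' \cap N^{\mathcal V}$ respectively (using that a non-Gamma factor's ultrapower has $N' \cap N^{\mathcal V} = \C 1$, so relative commutants are computed cleanly).

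The main obstacle is the lifting theorem: one needs to transfer the algebraic commutation relations witnessed in the isomorphism $M^{\mathcal U} \cong N^{\mathcal V}$ into genuine unitaries in $N^{\mathcal V}$ sitting inside the specific relative commutants $\{u_i\}' \cap N^{\mathcal V}$, with control on their traces (Haar-ness), so that Theorem \ref{cor-rel-comm} applies verbatim. This is where the ``new lifting theorem'' of the abstract does the work, and verifying that the rigid chain of $M$ cannot survive in the free-product world — i.e., that freeness of the two relative commutants genuinely obstructs the minimal linking chain — is the crux of the argument. The rest is bookkeeping: assembling the CIKE construction with the right number of steps, invoking Keisler--Shelah, and chaining the implications.
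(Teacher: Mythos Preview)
Your proposal has the right ingredients but the logical structure is reversed. The elementary-equivalence invariant is not ``$M$ contains a rigid chain of commuting Haar unitaries that cannot be shortened''; it is the opposite: $M$ is built so that \emph{every} pair of unitaries $u_1,u_2\in\mathscr U(M^{\mathcal U})$ with $\{u_1\}\dpr$ and $\{u_2\}\dpr$ $2$-independent can be linked by Haar unitaries $v_1,v_2\in\mathscr U(M^{\mathcal U})$ satisfying $[u_1,v_1]=[v_1,v_2]=[v_2,u_2]=0$ (this is Theorem~\ref{inductive}). The lifting theorem is used \emph{here}, inside the iterative construction of $M$: one needs to lift a $2$-independent pair in the ultraproduct to orthogonal pairs at each finite stage, so that the CIKE machinery from \cite{CIKE22} can adjoin the required $v_1,v_2$ step by step. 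It is not used, as you suggest, to transfer commutation relations across an isomorphism $M^{\mathcal U}\cong N^{\mathcal V}$.

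The contradiction then runs the other way. If $M\equiv N$ with $N=N_1\ast N_2$ diffuse, take Haar unitaries $u_1\in\mathscr U(N_1)$, $u_2\in\mathscr U(N_2)$; these are freely (hence $2$-)independent. Under an isomorphism $M^{\mathcal U}\cong N^{\mathcal V}$ with $\mathcal U$ countably cofinal, the universal linking property of $M$ produces Haar $v_1,v_2\in N^{\mathcal V}$ with $[u_1,v_1]=[v_1,v_2]=[v_2,u_2]=0$. But Theorem~\ref{cor-rel-comm} says $\{u_1\}'\cap N^{\mathcal V}$ and $\{u_2\}'\cap N^{\mathcal V}$ are freely independent, so the trace-zero unitaries $v_1,v_2$ cannot commute --- a direct contradiction. (The case of non-countably-cofinal $\mathcal U$ is handled separately via \cite[Lemma 2.3]{BCI15}.) There is no ``minimal chain length'' comparison and no need to conjugate endpoints of a fixed chain in $M$ into $N_1,N_2$; the argument is simply existence in $M^{\mathcal U}$ versus non-existence in $N^{\mathcal V}$ of a single length-two bridge.
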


In particular, Theorem \ref{non-ee} provides the first example of a non-Gamma ${\rm II_1}$ factor $M$ which is not elementarily equivalent to $M*\rL(\mathbb Z)$.
The conclusion of Theorem \ref{non-ee} is verified by any ${\rm II_1}$ factor $M$ satisfying the following:

\begin{letterthm}\label{inductive}
There exists a separable ${\rm II_1}$ factor $M$ which does not have property Gamma and satisfies the following. For every countably cofinal ultrafilter $\mathcal U$ on a set $J$ and $u_1,u_2\in\mathscr U(M^{\mathcal U})$ such that $\{u_1\}\dpr$ and $\{u_2\}\dpr$ are $2$-independent in $M^{\mathcal U}$ with respect to $\tau^{\mathcal U}$, there exist Haar unitaries $v_1,v_2\in\mathscr U(M^{\mathcal U})$ such that $[u_1,v_1]=[u_2,v_2]=[v_1,v_2]=0$.
\end{letterthm}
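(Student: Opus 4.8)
The plan is to construct $M$ as a tracial inductive limit of amalgamated free product von Neumann algebras, following the template of Chifan--Ioana--Kunnawalkam Elayavalli \cite{CIKE22} and feeding in the new lifting theorem so as to accommodate pairs that are merely $2$-independent rather than freely independent. First I would reduce the conclusion to a single \emph{finitary} absorption property of $M$: for every $\varepsilon>0$ and $N\in\N$ there is $\delta>0$ such that whenever $u_1,u_2\in\mathscr U(M)$ satisfy $|\tau(x_1y_1)|<\delta$ and $|\tau(x_1y_1x_2y_2)|<\delta$ for all $x_i=u_1^{a_i}-\tau(u_1^{a_i})$, $y_i=u_2^{b_i}-\tau(u_2^{b_i})$ with $1\leq|a_i|,|b_i|\leq N$ --- this says $\{u_1\}\dpr$ and $\{u_2\}\dpr$ are ``$\delta$-approximately $2$-independent up to order $N$'' --- then there exist $v_1,v_2\in\mathscr U(M)$ with $\|[u_1,v_1]\|_2,\|[u_2,v_2]\|_2,\|[v_1,v_2]\|_2<\varepsilon$ and $|\tau(v_i^j)|<\varepsilon$ for $1\leq|j|\leq N$ and $i=1,2$. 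Deducing the theorem is then routine: for a nonprincipal ultrafilter $\mathcal V$ on $\N$, represent $u_1,u_2\in\mathscr U(M^{\mathcal V})$ by unitary sequences $(u_{i,k})_k$; $2$-independence forces the relevant word-traces to tend to $0$ along $\mathcal V$, so applying the finitary property along a diagonal with $\varepsilon\downarrow 0$, $N\uparrow\infty$ yields $v_i=(v_{i,k})^{\mathcal V}\in\mathscr U(M^{\mathcal V})$ with vanishing non-zero moments --- hence Haar unitaries --- satisfying the three commutation relations exactly in the limit. To pass from $\mathcal V$ on $\N$ to an arbitrary countably cofinal ultrafilter $\mathcal U$ on a set $J$ one invokes the lifting theorem: countable cofinality of $\mathcal U$ lets one transport a separable pair of $2$-independent unitaries of $M^{\mathcal U}$ into $M^{\mathcal V}$ along a trace-preserving embedding, so that a solution found in $M^{\mathcal V}$ remains a solution in $M^{\mathcal U}$.

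For the construction, I would build an increasing chain $P_0\subset P_1\subset\cdots$ of separable ${\rm II_1}$ factors with $M=\overline{\bigcup_nP_n}$, starting from a non-Gamma factor $P_0$ (e.g.\ $\rL(\F_2)$), and fix a bookkeeping function ensuring that every task $(\bar u_1,\bar u_2,\varepsilon,N)$ --- with $\bar u_1,\bar u_2$ in a prescribed countable $\|\cdot\|_2$-dense subset of $\mathscr U(P_m)$ for some $m$, $\varepsilon\in\Q_{>0}$ and $N\in\N$ --- is treated at infinitely many stages $n\geq m$. Fix in advance threshold functions $\delta_0(\varepsilon,N)$ dictated by the analysis of the move below. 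At a stage treating $(\bar u_1,\bar u_2,\varepsilon,N)$: if $\bar u_1,\bar u_2$ are not $\delta_0(\varepsilon,N)$-approximately $2$-independent up to order $N$, set $P_{n+1}=P_n$; otherwise obtain $P_{n+1}$ from $P_n$ by a two-fold amalgamated free product --- first $P_n\ast_{\{\bar u_1\}\dpr}\big(\{\bar u_1\}\dpr\ovt\rL(\Z)\big)$, adjoining a Haar unitary $v_1$ commuting with $\bar u_1$, then, amalgamating the result over $\{v_1,\bar u_2\}\dpr$, a copy of $\{v_1,\bar u_2\}\dpr\ovt\rL(\Z)$, adjoining a Haar unitary $v_2$ commuting with $v_1$ and with $\bar u_2$. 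Approximate $2$-independence is used here precisely to guarantee that $\bar u_2$ is approximately free from $\{\bar u_1\}\dpr$, hence from $\{v_1\}\dpr$, so that $\{v_1,\bar u_2\}\dpr$ is close to the free product $\rL(\Z)\ast\{\bar u_2\}\dpr$ and its relative commutant stays under control. Each $P_{n+1}$ is an amalgamated free product of separable ${\rm II_1}$ factors over a common subalgebra with non-degenerate expectations, hence again a separable ${\rm II_1}$ factor, and so is $M$; moreover the finitary property above holds, since the witnesses $v_1,v_2\in P_{n+1}\subseteq M$ produced at a stage treating $(\bar u_1,\bar u_2,\varepsilon/2,N)$ serve any $u_1,u_2\in\mathscr U(M)$ that are $\|\cdot\|_2$-close to such a dense pair and satisfy the approximate $2$-independence hypothesis.

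The crux is to show that $M$ does not have property Gamma, i.e.\ that none of the amalgamated free product moves creates central sequences in the limit. Here I would deploy the deformation/rigidity technology of Ioana--Peterson--Popa \cite{IPP05}, in the form used in \cite{CIKE22}: one arranges that the subalgebras over which the moves amalgamate remain ``thin'', with controlled relative commutant in the ultrapower, so that any $x\in M'\cap M^{\mathcal U}\subseteq\bigcap_n\big(P_n'\cap M^{\mathcal U}\big)$ must, by amalgamated-free-product malleability and spectral gap arguments, localize into some $P_n$ --- whence $x\in P_n'\cap P_n^{\mathcal U}=\C1$ because $P_0$, and therefore each $P_n$, is non-Gamma. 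The main obstacle is reconciling this rigidity with the demands of the construction: the moves must be frequent enough, and valid for merely $2$-independent (not just freely independent) pairs, to yield the finitary property, yet no move may enlarge the relevant relative commutants. This is exactly where the quantitative control furnished by approximate $2$-independence is spent --- keeping the amalgam subalgebras, in particular $\{v_1,\bar u_2\}\dpr$, close to honest free products --- and where the lifting theorem is indispensable to make the reduction to the finitary property legitimate for arbitrary countably cofinal $\mathcal U$.
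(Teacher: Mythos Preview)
Your high-level template (an inductive limit of amalgamated free products following \cite{CIKE22}) is correct, but two concrete choices create genuine gaps.

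First, you place the lifting theorem in the wrong spot and misdescribe what it does. In the paper, Theorem~\ref{lifting} is applied \emph{before} the inductive machine is invoked: given $u_1,u_2\in\mathscr U(M^{\mathcal U})$ with $\{u_1\}\dpr,\{u_2\}\dpr$ $2$-independent, it produces \emph{exactly orthogonal} abelian subalgebras $C_k,D_k\subset M_{n_k}$ with $u_1\in\prod_{\mathcal U}C_k$ and $u_2\in\prod_{\mathcal U}D_k$, so one may write $u_i=(u_{i,k})^{\mathcal U}$ with $(u_{1,k},u_{2,k})\in\mathscr W(M_{n_k})$ for every $k$. The construction of $M$ therefore only has to treat \emph{orthogonal} pairs, and for those Corollary~\ref{amalgam} (i.e.\ \cite[Corollary~4.3]{CIKE22}) applies verbatim. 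Your plan instead bakes ``approximate $2$-independence'' into the moves themselves, via a two-step amalgamation over $\{\bar u_1\}\dpr$ and then over $\{v_1,\bar u_2\}\dpr$. This second base is not, in any usable sense, close to $\rL(\Z)\ast\{\bar u_2\}\dpr$: approximate $2$-independence of $\bar u_1$ and $\bar u_2$ gives you no control whatsoever on the joint distribution of $v_1$ (a fresh Haar unitary commuting with $\bar u_1$) and $\bar u_2$, and the relative-commutant control in the ``moreover'' clause of Corollary~\ref{amalgam}---which is what drives the non-Gamma argument---is unavailable for such an ill-behaved base. Separately, your stated use of the lifting theorem (to ``transport'' a pair from $M^{\mathcal U}$ to $M^{\mathcal V}$ along an embedding) is not what the theorem says; there is no such embedding in general. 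Countable cofinality enters instead through \cite[Lemma~2.2]{BCI15}, which represents elements of $M^{\mathcal U}$ as sequences landing in the $M_n$'s.

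Second, your seed $P_0=\rL(\F_2)$ breaks the non-Gamma argument. The paper takes $M_1$ with property~(T) (e.g.\ $\rL(\PSL_n(\Z))$, $n\geq 3$) and then quotes \cite[Proposition~5.4]{CIKE22}: property~(T) forces any central sequence of the inductive limit $M$ to be uniformly close to a fixed stage $M_n$, after which the structure of $\Phi$ finishes. With $\rL(\F_2)$ you have no such mechanism; your ``localize into some $P_n$ by malleability and spectral gap'' is not an argument---spectral gap of $P_n$ inside an unbounded tower of amalgamations is precisely what property~(T) supplies and $\rL(\F_2)$ does not. Nothing in your outline prevents central sequences from drifting across infinitely many stages.
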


An ultrafilter $\mathcal U$ on a set $J$ is called {\em countably cofinal} if there exists a sequence $(A_n)_{n\in\mathbb N}$ in $\mathcal U$ with $\bigcap_{n\in\mathbb N}A_n=\emptyset$. Any free ultrafilter on $\mathbb N$ is countably cofinal.

The proof of Theorem \ref{inductive} uses the iterative amalgamated free product construction introduced in \cite{CIKE22}. In \cite[Theorem B]{CIKE22}, this construction was used to build a non-Gamma separable ${\rm II_1}$ factor $M$ with the following property: for any  unitaries $u_1,u_2\in \mathscr U(M^{\mathcal U})$ such that  $\{u_1\}\dpr$ and $\{u_2\}\dpr$ are orthogonal and $u_1^2=u_2^3=1$, there exist Haar unitaries $v_1,v_2\in\mathscr U(M^{\mathcal U})$ such that $[u_1,v_1]=[u_2,v_2]=[v_1,v_2]=0$. The proof of \cite[Theorem B]{CIKE22} relies crucially on a lifting lemma showing that any unitaries $u_1,u_2\in \mathscr U(M^{\mathcal U})$ such that  $\{u_1\}\dpr$ and $\{u_2\}\dpr$ are orthogonal and $u_1^2=u_2^3=1$ lift to unitaries in $M$ with the same properties. A key limitation in \cite{CIKE22} was the assumption that $u_1$ and $u_2$ have orders $2$ and $3$.  We remove this limitation here by proving a general lifting result  of independent interest (see Theorem \ref{lifting}) which shows that any unitaries $u_1,u_2\in \mathscr U(M^{\mathcal U})$ such that $\{u_1\}\dpr$ and $\{u_2\}\dpr$ are $2$-independent admit lifts $u_1=(u_{1,n})^{\mathcal U}$ and $u_2=(u_{2,n})^{\mathcal U}$ with $\{u_{1,n}\}\dpr$ and $\{u_{2,n}\}\dpr$ orthogonal for every $n \in \N$. 
With this result in hand, adjusting the iterative construction from \cite{CIKE22} implies Theorem \ref{inductive}. 

To explain how Theorem \ref{non-ee} follows by combining Theorem \ref{inductive} and Theorem \ref{cor-rel-comm}, let $M$ be a ${\rm II_1}$ factor as in Theorem \ref{inductive}, $N=N_1 \ast N_2$ a free product of diffuse tracial von Neumann algebras and $u_1\in \mathscr U(N_1)$, $u_2\in \mathscr U(N_2)$ Haar unitaries.  
Since $\{u_1\}\dpr$ and $\{u_2\}\dpr$ are freely and thus $2$-independent, it follows that
$M^{\mathcal U}\not\cong N^{\mathcal V}$, for any countably cofinal ultrafilter $\mathcal U$ and any ultrafilter $\mathcal V$.
Indeed, Theorem \ref{cor-rel-comm} implies that any Haar unitaries (more generally, any trace zero unitaries) $v_1,v_2\in\mathscr U(N^{\mathcal V})$ such that $[u_1,v_1]=[u_2,v_2]=0$ are freely independent and therefore do not commute. 
If $\mathcal U$ is an ultrafilter which is not countably cofinal, then we also have that $M^{\mathcal U}\not\cong N^{\mathcal V}$, for any ultrafilter $\mathcal V$. Otherwise,  using \cite[Lemma 2.3]{BCI15} we would get that $M^{\mathcal U}\cong M$, thus $N^{\mathcal V}\cong M$ is separable, hence $N^{\mathcal V}\cong N$ and therefore $M\cong N$. But then  $M^{\mathcal W}\cong N^{\mathcal W}$, for any free ultrafilter $\mathcal W$ on $\mathbb N$. Since $\mathcal W$ is countably cofinal, this is a contradiction. Altogether, we conclude that $M^{\mathcal U}\not\cong N^{\mathcal V}$, for any ultrafilters $\mathcal U,\mathcal V$, and thus $M, N$ are not elementarily equivalent.

\subsection*{Application to the orthogonalization problem} We end the introduction with an application to the following orthogonalization problem: given a ${\rm II}_1$ factor $M$  and two subsets $X,Y\subset M\ominus\mathbb C1$, when can we find $u\in\mathscr U(M)$ such that $uXu^*$ and $Y$ are orthogonal?
 In the case $X=A\ominus\mathbb C1$ and $Y=B\ominus \mathbb C1$, for von Neumann subalgebras $A,B\subset M$, this and related independence problems have been studied extensively by Popa (see e.g.\! \cite{Po13a,Po13b,Po17}). When $X,Y\subset M\ominus\mathbb C1$ are finite, a standard averaging argument shows that
 we can find $u\in\mathscr U(M)$ such that $uXu^*$ and $Y$ are ``almost orthogonal":
  for every $\varepsilon>0$, there exists $u\in\mathscr U(M)$ such that $|\langle uxu^*,y\rangle|<\varepsilon$, for every $x\in X,y\in Y$. This implies the existence of $v\in\mathscr U(M^{\mathcal U})$, where $\mathcal U$ is a free ultrafilter on $\mathbb N$, such that $vXv^*$ and $Y$ are orthogonal. In this context, much more is true: by a result of Popa (see \cite[Corollary 0.2]{Po13a}),  if $X,Y\subset M\ominus \mathbb C1$ are countable, then there exists $u\in\mathscr U(M^{\mathcal U})$ such that $uXu^*$ and $Y$ are freely independent.

By combining this result with the proof of our lifting theorem (Theorem \ref{lifting})  we settle affirmatively the above orthogonalization problem whenever $X,Y\subset M\ominus\mathbb C1$ are finite.

\begin{letterthm}\label{orthogonal}
Let $M$ be a ${\rm II}_1$ factor and $X,Y\subset M\ominus\mathbb C1$ be finite sets.

Then there exists $u\in\mathscr U(M)$ such that $uXu^*$ and $Y$ are orthogonal.
\end{letterthm}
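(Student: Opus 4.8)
The plan is to transfer the ultraproduct orthogonalization statement down to $M$ itself by a lifting argument modeled on the proof of Theorem \ref{lifting}. By Popa's result \cite[Corollary 0.2]{Po13a}, applied to the (finite, hence countable) sets $X, Y \subset M \ominus \C 1$, there exists $w \in \mathscr U(M^{\mathcal U})$, for $\mathcal U$ a free ultrafilter on $\N$, such that $wXw^*$ and $Y$ are freely independent in $M^{\mathcal U}$ with respect to $\tau^{\mathcal U}$; in particular they are orthogonal, i.e.\ $\tau^{\mathcal U}((wxw^*)\, y) = 0$ for all $x \in X$, $y \in Y$ (note $X, Y \subset M$ so these elements already have trace zero). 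Writing $w = (w_n)^{\mathcal U}$ with each $w_n \in \mathscr U(M)$, orthogonality in the ultraproduct says precisely that for every $x \in X$ and $y \in Y$ we have $\lim_{n \to \mathcal U} \langle w_n x w_n^*, y\rangle = 0$. Since $X$ and $Y$ are finite, the set of $n$ for which $|\langle w_n x w_n^*, y\rangle| < \varepsilon$ simultaneously for all $(x,y) \in X \times Y$ belongs to $\mathcal U$ and is in particular nonempty; hence for every $\varepsilon > 0$ there exists a \emph{single} unitary $u = u_\varepsilon \in \mathscr U(M)$ with $|\langle u x u^*, y\rangle| < \varepsilon$ for all $x \in X$, $y \in Y$. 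This already recovers the ``almost orthogonal'' statement, but to get genuine orthogonality in $M$ one must do better.

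The key step is to upgrade ``almost orthogonal for every $\varepsilon$'' to ``exactly orthogonal'', and here is where the idea behind Theorem \ref{lifting} enters: rather than merely extracting one term of the sequence, one runs the orthogonalization iteratively, correcting the small residual errors. Concretely, I would enumerate $X = \{x_1, \dots, x_p\}$, $Y = \{y_1, \dots, y_q\}$ and build a sequence $u^{(k)} \in \mathscr U(M)$ together with a decreasing sequence $\varepsilon_k \to 0$ such that $u^{(k)}$ makes all inner products $\langle u^{(k)} x_i (u^{(k)})^*, y_j\rangle$ of size at most $\varepsilon_k$, and such that $\|u^{(k+1)} - u^{(k)}\|_2$ is controlled by a summable quantity comparable to $\varepsilon_k$; the unitary at each stage is obtained by applying the averaging/ultraproduct argument of the previous paragraph \emph{inside} a suitable cutdown or to the perturbed family $u^{(k)} X (u^{(k)})^*$. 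If the corrections can be arranged to be $\ell^2$-summable in $\|\cdot\|_2$, the sequence $u^{(k)}$ converges in $\|\cdot\|_2$ to a unitary $u \in \mathscr U(M)$ (using that $\mathscr U(M)$ is closed in $\|\cdot\|_2$-convergence of unitaries when the limit is a unitary — which one guarantees by the summability of the $\|\cdot\|_2$-increments forcing the $\|\cdot\|_\infty \le 1$ limit to still be unitary), and by continuity $\langle u x_i u^*, y_j\rangle = \lim_k \langle u^{(k)} x_i (u^{(k)})^*, y_j\rangle = 0$ for all $i, j$, which is exactly the desired conclusion.

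The main obstacle is arranging the iteration so that the successive unitaries $u^{(k)}$ actually form a Cauchy sequence in $\|\cdot\|_2$: a priori the unitary produced by the averaging argument at stage $k$ could be ``far'' from $u^{(k-1)}$ in $\|\cdot\|_2$ even though it only needs to fix a small residual error, so one needs to show that one can achieve the $\varepsilon_{k}$-orthogonalization of the perturbed family by a unitary that is itself $\|\cdot\|_2$-close to the identity — this is the analogue of the ``local'' lifting estimate in the proof of Theorem \ref{lifting}, and it is precisely the quantitative content that makes the scheme converge. I expect this to follow from the same mechanism used there: the ultraproduct statement, when unpacked with $\varepsilon$'s, gives a unitary in $M$ realizing the correction within any prescribed $\|\cdot\|_2$-tolerance of $1$, because the ``error to be corrected'' is itself small. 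Once this quantitative local step is in place, the telescoping/summability bookkeeping is routine, and passing to the limit gives $u \in \mathscr U(M)$ with $uXu^*$ orthogonal to $Y$.
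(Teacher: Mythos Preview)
Your high-level strategy matches the paper's: use Popa's result to orthogonalize in $M^{\mathcal U}$, then descend to $M$ via a perturbation argument. The gap is in the descent. You assert that ``the ultraproduct statement, when unpacked with $\varepsilon$'s, gives a unitary in $M$ realizing the correction within any prescribed $\|\cdot\|_2$-tolerance of $1$, because the error to be corrected is itself small.'' This is not true: the averaging/ultraproduct argument produces \emph{some} unitary that almost orthogonalizes, with no control whatsoever on its distance to $1$; Popa's unitary realizing free independence is typically far from $1$. So your iteration has no reason to be Cauchy.

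What is actually needed --- and what the proof of Theorem \ref{lifting} invokes --- is the perturbation Lemma \ref{perturbation}: a direct Newton-type construction showing that if the orthogonality defect $\varepsilon_0 = \varepsilon(x,y)$ is small relative to $\delta_0^2 - (mn-1)\gamma_0$, where $\delta_0 = \delta(x,y)$ is the minimum commutator norm $\min_{i,j}\|[x_i,y_j]\|_2$ and $\gamma_0 = \gamma(x,y)$ bounds the off-diagonal commutator inner products, then some $v\in\mathscr U(M)$ with $\|v-1\|_\infty \lesssim \varepsilon_0/(\delta_0^2-(mn-1)\gamma_0)$ achieves $\varepsilon(vxv^*,y)=0$ exactly. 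The infinitesimal correction is $u=\exp({\rm i}h)$ with $h$ in the real span of the commutators $[x_i,y_j]$ (Lemma \ref{moveone}); iterating gives convergence in $\|\cdot\|_\infty$. None of this is a consequence of the ultraproduct mechanism.

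Moreover, to enter the regime where that lemma applies you need more than orthogonality in $M^{\mathcal U}$: you need $\delta(vxv^*,y)>0$ and $\gamma(vxv^*,y)=0$, and these come from $2$-independence (free independence gives $\|[vx_iv^*,y_j]\|_2=\sqrt{2}\,\|x_i\|_2\|y_j\|_2>0$ and makes the cross terms vanish once $X$ and $Y$ are each taken to consist of pairwise orthogonal self-adjoint elements). The paper's proof then extracts a single index $k$ at which $v_kxv_k^*$ and $y$ satisfy the hypothesis of Lemma \ref{perturbation}, applies that lemma once to obtain $w\in\mathscr U(M)$, and sets $u=wv_k$. Your external iteration becomes unnecessary once the correct local lemma is in hand --- but that lemma is the entire content, and you have not supplied it.
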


\subsection*{Acknowledgements} This work was initiated when CH was visiting the University of California at San Diego (UCSD) in March 2023. He thanks the Department of Mathematics at UCSD for its kind hospitality. The authors thank Ben Hayes, Srivatsav Kunnawalkam Elayavalli and Sorin Popa for their useful comments.

%{\hypersetup{linkcolor=black} \tableofcontents}

%%%%%%%%%%%%%%%%%%%%%%%%%

\section{Preliminaries}

\subsection{Noncommutative $\rL^p$-spaces}

Let $(M, \tau)$ be a tracial von Neumann algebra. For every $p \in [1, +\infty)$, we write $\rL^p(M) = \rL^p(M, \tau)$ for the completion of $M$ with respect to the norm $\|\cdot\|_p$ defined by $\|x\|_p = \tau(|x|^p)^{1/p}$ for every $x \in M$. 
More generally, given a subspace $\mathscr W\subset M$, we denote by $\rL^p(\mathscr W)\subset\rL^p(M)$ the closure of $\mathscr W$ with respect to $\|\cdot\|_p$.
Then $\rL^p(M)$ is the noncommutative $\rL^p$-space associated with the tracial von Neumann algebra $M$. We simply write $\rL^\infty(M) = M$. 

We will use the following generalized noncommutative H\"older inequality (see e.g.\! \cite[Theorem IX.2.13]{Ta03}): for all $k \geq 2$, all $p_1, \dots, p_k, r \in [1, +\infty)$ such that $\frac1r = \sum_{j = 1}^k \frac{1}{p_j}$ and all $(x_j)_j \in \prod_{j = 1}^k \rL^{p_j}(M)$, we have $x = x_1 \cdots x_k \in \rL^r(M)$ and $\|x_1 \cdots x_k\|_r \leq \|x_1\|_{p_1} \cdots \|x_k\|_{p_k}$. 

For all $1 \leq p \leq q < +\infty$ and all $x \in M$, we have $\|x\|_1 \leq \|x\|_p \leq \|x\|_q \leq \|x\|_\infty$ and so we may regard $M \subset \rL^q(M) \subset \rL^p(M) \subset \rL^1(M)$. 

\subsection{Ultraproduct von Neumann algebras}

Let $\mathcal U$ be a nonprincipal ultrafilter on $\N$. Whenever $(M, \tau)$ is a tracial von Neumann algebra, we denote by $(M^{\mathcal U}, \tau^{\mathcal U})$ the tracial ultraproduct von Neumann algebra. We regard $\rL^2(M^{\mathcal U}) \subset \rL^2(M)^{\mathcal U}$ as a closed subspace and we denote by $e : \rL^2(M)^{\mathcal U} \to \rL^2(M^{\mathcal U})$ the corresponding orthogonal projection. Recall the following elementary yet useful facts.

\begin{lem}\label{lem-ultraproduct}
Keep the same notation as above. The following assertions hold:
\begin{itemize}
\item [$(\rm i)$] Let $(\xi_n)_n$ be a $\|\cdot\|_2$-bounded sequence in $\rL^2(M)$ and set $\xi = (\xi_n)^{\mathcal U} \in \rL^2(M)^{\mathcal U}$. Then $\xi \in \rL^2(M^{\mathcal U})$ if and only if for every $\varepsilon > 0$, there exists a $\|\cdot\|_\infty$-bounded sequence $ (x_n)_n$  in $M$ such that $\lim_{n \to \mathcal U} \|\xi_n - x_n \|_2 \leq \varepsilon$.

\item [$(\rm ii)$] Let $r > 2$. Then for every $\|\cdot\|_r$-bounded sequence $(\xi_n)_n$ in $\rL^r(M)$, we have $\xi = (\xi_n)^{\mathcal U} \in \rL^2(M^{\mathcal U})$.

\item [$(\rm iii)$] Let $(\xi_n)_n$ be a $\|\cdot\|_2$-bounded sequence in $\rL^2(M)$. Let $(x_n)_n$ and $(y_n)_n$ be $\|\cdot\|_\infty$-bounded sequences in $M$. Set $\xi = (\xi_n)^{\mathcal U} \in \rL^2(M)^{\mathcal U}$, $x = (x_n)^{\mathcal U} \in M^{\mathcal U}$ and $y = (y_n)^{\mathcal U} \in M^{\mathcal U}$. If $\xi \in \rL^2(M^{\mathcal U})$, then $(x_n \xi_n y_n)^{\mathcal U} = x \xi y \in \rL^2(M^{\mathcal U})$.
\end{itemize}
\end{lem}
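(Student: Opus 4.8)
The plan is to deduce (i) directly from the construction of the embedding $\rL^2(M^{\mathcal U})\subset\rL^2(M)^{\mathcal U}$, and then to obtain (ii) and (iii) as short reductions to (i). For (i), I would unwind definitions. The Hilbert space ultraproduct $\rL^2(M)^{\mathcal U}$ carries the inner product $\langle (\xi_n)^{\mathcal U},(\eta_n)^{\mathcal U}\rangle=\lim_{n\to\mathcal U}\langle\xi_n,\eta_n\rangle$, and for any $\|\cdot\|_\infty$-bounded sequence $(x_n)_n$ in $M$ one has $\|(x_n)^{\mathcal U}\|_2^2=\lim_{n\to\mathcal U}\tau(x_n^*x_n)=\tau^{\mathcal U}\big((x_n^*x_n)^{\mathcal U}\big)$; hence $(x_n)^{\mathcal U}\mapsto(\widehat{x_n})^{\mathcal U}$ is a $\|\cdot\|_2$-isometry of $M^{\mathcal U}$ into $\rL^2(M)^{\mathcal U}$, and $\rL^2(M^{\mathcal U})$ is by definition the $\|\cdot\|_2$-closure of its range. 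Therefore $\xi=(\xi_n)^{\mathcal U}$ belongs to $\rL^2(M^{\mathcal U})$ if and only if it is a $\|\cdot\|_2$-limit of elements $(x_n)^{\mathcal U}\in M^{\mathcal U}$, i.e. if and only if for every $\varepsilon>0$ there is a $\|\cdot\|_\infty$-bounded sequence $(x_n)_n$ in $M$ with $\lim_{n\to\mathcal U}\|\xi_n-x_n\|_2\leq\varepsilon$, which is exactly the stated criterion.

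For (ii), I would first note that since $\tau$ is a state, $\|\cdot\|_2\leq\|\cdot\|_r$, so $C:=\sup_n\|\xi_n\|_r<+\infty$ and $(\xi_n)_n$ is $\|\cdot\|_2$-bounded; thus $\xi=(\xi_n)^{\mathcal U}$ makes sense in $\rL^2(M)^{\mathcal U}$. Then I would approximate each $\xi_n$ by a truncation. Fix $\varepsilon>0$, take the polar decomposition $\xi_n=v_n|\xi_n|$ with $v_n\in M$ a partial isometry and $|\xi_n|\in\rL^r(M)_+$, let $e_n=\mathbf 1_{[0,K]}(|\xi_n|)$ for a large constant $K$ to be chosen, and set $x_n:=v_n|\xi_n|e_n\in M$, so that $\|x_n\|_\infty\leq K$ and $\xi_n-x_n=v_n|\xi_n|(1-e_n)$. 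The key estimate is
\[
\|\xi_n-x_n\|_2^2=\tau\big(|\xi_n|^2\,\mathbf 1_{(K,+\infty)}(|\xi_n|)\big)\leq K^{2-r}\,\tau\big(|\xi_n|^r\big)\leq K^{2-r}C^r ,
\]
valid because $t^{2-r}\leq K^{2-r}$ for $t>K$ when $r>2$. Choosing $K$ with $K^{2-r}C^r\leq\varepsilon^2$ forces $\|\xi_n-x_n\|_2\leq\varepsilon$ for every $n$, so $\lim_{n\to\mathcal U}\|\xi_n-x_n\|_2\leq\varepsilon$ with $(x_n)_n$ $\|\cdot\|_\infty$-bounded, and (i) gives $\xi\in\rL^2(M^{\mathcal U})$.

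For (iii), I would first observe that $(x_n\xi_n y_n)_n$ is $\|\cdot\|_2$-bounded by $\big(\sup_n\|x_n\|_\infty\big)\big(\sup_n\|\xi_n\|_2\big)\big(\sup_n\|y_n\|_\infty\big)$, so $x\xi y:=(x_n\xi_n y_n)^{\mathcal U}$ is a well-defined element of $\rL^2(M)^{\mathcal U}$, independent of the choice of representatives by the triangle inequality together with the bounds $\|a\eta b\|_2\leq\|a\|_\infty\|\eta\|_2\|b\|_\infty$; this is what justifies the notation $x\xi y$. To place it in $\rL^2(M^{\mathcal U})$, fix $\varepsilon>0$; by (i) applied to $\xi$ there is a $\|\cdot\|_\infty$-bounded sequence $(z_n)_n$ in $M$ with $\lim_{n\to\mathcal U}\|\xi_n-z_n\|_2\leq\varepsilon$, and then $(x_nz_ny_n)_n$ is $\|\cdot\|_\infty$-bounded while
\[
\lim_{n\to\mathcal U}\|x_n\xi_n y_n-x_nz_ny_n\|_2\leq\big(\sup_n\|x_n\|_\infty\big)\big(\sup_n\|y_n\|_\infty\big)\,\varepsilon .
\]
Since $\varepsilon>0$ is arbitrary, (i) applied to the sequence $(x_n\xi_n y_n)_n$ yields $x\xi y\in\rL^2(M^{\mathcal U})$.

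I do not expect a genuine obstacle here: (i) is a matter of definitions, and (ii)–(iii) reduce to it in a few lines. The only place needing mild care is the truncation bound in (ii), where it is essential that $r>2$ strictly, so that the exponent $2-r$ is negative and the tail $\tau\big(|\xi_n|^2\,\mathbf 1_{(K,+\infty)}(|\xi_n|)\big)$ can be dominated by $K^{2-r}\|\xi_n\|_r^r$ uniformly in $n$; for $r=2$ the statement is false.
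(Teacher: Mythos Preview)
Your proposal is correct and follows essentially the same approach as the paper: part (i) is declared straightforward in the paper while you spell out the definition, and your proofs of (ii) and (iii) via polar decomposition with spectral truncation and via approximation by a $\|\cdot\|_\infty$-bounded sequence $(z_n)_n$ are exactly what the paper does.
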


\begin{proof}
$(\rm i)$ It is straightforward.

$(\rm ii)$ Let $(\xi_n)_n$ be a $\|\cdot\|_r$-bounded sequence in $\rL^r(M)$. There exists $\kappa > 0$ such that $\sup_{n \in \N} \tau(|\xi_n|^r) < \kappa$. Set $\xi = (\xi_n)^{\mathcal U} \in \rL^2(M)^{\mathcal U}$. For every $n \in \N$, write $\xi_n = v_n |\xi_n|$ for the polar decomposition of $\xi_n \in \rL^r(M)$. For every $n \in \N$ and every $t > 0$, define the spectral projection $p_{n, t} = \mathbf 1_{[0, t]}(|\xi_n|) \in M$. For every $n \in \N$ and every $t > 0$, we have 
$$\|\xi_n \, p_{n, t}^\perp\|_2^2 \leq \| |\xi_n| \, p_{n, t}^\perp \|_2^2 = \tau(|\xi_n|^2 \mathbf 1_{(t, +\infty)}(|\xi_n|)) \leq \frac{1}{t^{r - 2}} \tau(|\xi_n|^r \mathbf 1_{(t, +\infty)}(|\xi_n|)) \leq \frac{\kappa}{t^{r - 2}}.$$
Let $\varepsilon > 0$ and choose $t > 0$ large enough so that $\frac{\kappa}{t^{r - 2}} \leq \varepsilon^2$. For every $n \in \N$, set $x_n = \xi_n \, p_{n, t}\in M$ and observe that we have $\|\xi_n - x_n\|_2 = \|\xi_n \, p_{n, t}^\perp\|_2 \leq \varepsilon$. Since $\sup \left\{ \|x_n\|_\infty \mid n \in \N \right \} \leq t$, Item $(\rm i)$ implies that $\xi = (\xi_n)^{\mathcal U} \in \rL^2(M^{\mathcal U})$.

$(\rm iii)$ Assume that $\xi = (\xi_n)^{\mathcal U} \in \rL^2(M^{\mathcal U})$. Choose $\kappa > 0$ large enough so that $\sup \left\{ \|x_n\|_\infty, \|y_n\|_\infty \mid n \in \N \right\} \leq \kappa$. Let $\varepsilon > 0$. By Item $(\rm i)$, there exists a $\|\cdot\|_\infty$-bounded sequence $(z_n)_n$ in $M$ such that $\lim_{n \to \mathcal U} \|\xi_n - z_n\|_2 \leq \varepsilon$. Set $z = (z_n)^{\mathcal U} \in M^{\mathcal U}$. Then $\|\xi - z\|_2 = \lim_{n \to \mathcal U} \|\xi_n - z_n\|_2 \leq \varepsilon$. Since $x z y = (x_n z_n y_n)^{\mathcal U} \in M^{\mathcal U}$, we have
\begin{align*}
\|(x_n \xi_n y_n)^{\mathcal U} - x \xi y\|_2 &\leq \|(x_n \xi_n y_n)^{\mathcal U} -  (x_n z_n y_n)^{\mathcal U} \|_2 + \| x z y - x \xi y\|_2 \\
&= \lim_{n \to \mathcal U} \|x_n (\xi_n - z_n) y_n\|_2 + \| x (z  -  \xi) y\|_2 \\
&\leq 2 \kappa^2 \|\xi - z\|_2 \leq 2 \kappa^2 \varepsilon.
\end{align*}
Since this holds for every $\varepsilon > 0$, it follows that $(x_n \xi_n y_n)^{\mathcal U} = x \xi y \in \rL^2(M^{\mathcal U})$.
\end{proof}

We also record the following basic fact concerning tracial ultraproducts:
\begin{lem}\label{p-norm}
Keep the same notation as above. Let $(x_n)_n$ be a $\|\cdot\|_\infty$-bounded sequence in $M$. Set $x=(x_n)^{\mathcal U}\in M^{\mathcal U}$. Then for every $p\in [1,+\infty)$ we have  $\|x\|_p=\lim_{n\rightarrow\mathcal U}\|x_n\|_p$. 
\end{lem}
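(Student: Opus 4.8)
The plan is to reduce the statement to the two endpoint-type facts that hold by definition of the tracial ultraproduct: that $\tau^{\mathcal U}(x) = \lim_{n \to \mathcal U} \tau(x_n)$ for $x = (x_n)^{\mathcal U}$, and that $\|x\|_\infty = \tau^{\mathcal U}\text{-essential sup}$, which we do not actually need. The cleanest route is to treat $p$ an even integer first, then bootstrap to general $p$ by a density/approximation argument using the functional calculus, and finally invoke the Hölder interpolation inequality recalled in the preliminaries. For an even integer $p = 2m$, we have $\|x\|_p^p = \tau^{\mathcal U}((x^* x)^m)$, and since $(x^*x)^m = ((x_n^* x_n)^m)^{\mathcal U}$ (products and adjoints pass through the ultraproduct for $\|\cdot\|_\infty$-bounded sequences), the definition of $\tau^{\mathcal U}$ gives $\|x\|_p^p = \lim_{n \to \mathcal U} \tau((x_n^* x_n)^m) = \lim_{n \to \mathcal U} \|x_n\|_p^p$, and taking $p$-th roots finishes this case.

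For general $p \in [1, +\infty)$, I would argue as follows. Put $K = \sup_n \|x_n\|_\infty$, so $\|x\|_\infty \le K$ as well, and all the $p$-norms in sight are bounded by $K$. Fix $\varepsilon > 0$ and choose an even integer $q$ large; the idea is that on the bounded interval $[0, K]$ the function $t \mapsto t^{p/2}$ can be uniformly approximated by polynomials, hence the functional calculus element $|x|^p = (x^* x)^{p/2}$ is $\|\cdot\|_1$-close (uniformly in the bound $K$) to a fixed $*$-polynomial $P(x^* x)$ in $x^* x$, and likewise $|x_n|^p$ is $\|\cdot\|_1$-close to $P(x_n^* x_n)$ with the same error. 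Since $P(x_n^* x_n)^{\mathcal U} = P(x^* x)$, the even-degree (i.e.\ genuinely polynomial, hence already-handled) case gives $\tau^{\mathcal U}(P(x^* x)) = \lim_{n \to \mathcal U} \tau(P(x_n^* x_n))$. Combining this with the two $\|\cdot\|_1$-approximations and the fact that $\tau$, $\tau^{\mathcal U}$ are contractive for $\|\cdot\|_1$, we get $|\tau^{\mathcal U}(|x|^p) - \lim_{n \to \mathcal U} \tau(|x_n|^p)| \le 2\varepsilon$; letting $\varepsilon \to 0$ yields $\|x\|_p^p = \lim_{n \to \mathcal U}\|x_n\|_p^p$, and again take $p$-th roots.

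The only mildly delicate point — and the one I would be most careful about — is making the polynomial approximation uniform in the operator bound, i.e.\ choosing a single polynomial $P$ (depending on $p$, $K$, $\varepsilon$ but not on $n$) that simultaneously approximates $t^{p/2}$ well on $[0,K]$; this is just the Weierstrass theorem on the compact interval $[0,K]$, together with the elementary inequality $\||f(a)| - |g(a)|\|_1 \le \|f - g\|_{\infty,[0,K]}$ for self-adjoint $a$ with $\|a\| \le K$ and continuous $f, g$, so there is no real obstacle. An alternative, perhaps slicker, route avoiding functional calculus entirely: the $*$-homomorphism $M \to M^{\mathcal U}$, $a \mapsto (a)^{\mathcal U}$ (constant sequences) together with the map $(x_n)_n \mapsto (x_n)^{\mathcal U}$ shows that $(|x_n|^p)^{\mathcal U}$ and $|x|^p$ agree as elements of $\rL^1(M^{\mathcal U})$ because continuous functional calculus on a $\|\cdot\|_\infty$-bounded sequence commutes with taking ultraproducts — a standard fact — after which one simply applies $\tau^{\mathcal U}$; I would likely just cite this and keep the lemma's proof to one or two lines.
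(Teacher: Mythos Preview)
Your proposal is correct and follows essentially the same approach as the paper: first handle even integers $p=2m$ via $(x^*x)^m=((x_n^*x_n)^m)^{\mathcal U}$ and the definition of $\tau^{\mathcal U}$, then pass to general $p$ by uniformly approximating $t\mapsto t^{p/2}$ by polynomials on the relevant compact interval (the paper normalizes to $[0,1]$ and phrases this as Stone--Weierstrass for $\{P(t^2)\}$ in $\mathrm C([0,1])$). The stray mention of a large even integer $q$ and of H\"older interpolation are unused and can be dropped.
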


\begin{proof}
We may assume that $\sup \left\{\|x_n\|_\infty\mid n\in\mathbb N\right\}\leq 1$ and thus $\|x\|_\infty\leq 1$. Note that $|x|^2=x^*x=(x_n^*x_n)^{\mathcal U}=(|x_n|^2)^{\mathcal U}$. Then for every $k\in\mathbb N$, we have $|x|^{2k}=(|x_n|^{2k})^{\mathcal U}$ and thus $\tau^{\mathcal U}(|x|^{2k})=\lim_{n\rightarrow\mathcal U}\tau(|x_n|^{2k})$. Thus, if $P(t)\in\mathbb C[t]$ is a polynomial with complex coefficients and $Q(t)=P(t^2)$, then $\tau^{\mathcal U}(Q(|x|))=\lim_{n\rightarrow\mathcal U}\tau(Q(|x_n|))$. Since by the Stone--Weierstrass theorem $\{P(t^2)\mid P(t)\in\mathbb C[t]\}$ is dense in $\rC([0,1])$ in the uniform norm, we get that $\tau^{\mathcal U}(f(|x|))=\lim_{n\rightarrow\mathcal U}\tau(f(|x_n|))$, for every $f\in \rC([0,1])$. In particular, $\tau^{\mathcal U}(|x|^p)=\lim_{n\rightarrow\mathcal U}\tau(|x_n|^p)$, for every $p\in [1,+\infty)$, which implies the conclusion. 
\end{proof}

\subsection{Amalgamated free products}\label{subsection-amalgamated}

Let $I$ be an at most countable index set such that $2 \leq | I | \leq +\infty$. Let $(M_i, \tau_i)_{i \in I}$ be a family of tracial von Neumann algebras with a common von Neumann subalgebra $(B, \tau_0)$ such that for every $i \in I$, we have $\tau_i|_B = \tau_0$. Denote by $(M, \tau) = \ast_{B, i \in I} (M_i, \tau_i)$ the tracial amalgamated free product von Neumann algebra. Simply denote by $\rE_B : M \to B$ the unique trace-preserving faithful normal conditional expectation.

Denote by $\mathscr W \subset M$ the linear span of $B$ and of all the reduced words in $M$ of the form $w = w_1 \cdots w_n$, with $n \geq 1$, $w_j \in M_{\varepsilon_j} \ominus B$ for every $j \in \{1, \dots, n\}$, and $\varepsilon_1, \dots, \varepsilon_n \in I$ such that $\varepsilon_1 \neq \cdots \neq \varepsilon_n$. For every subset $J \subset I$, denote by $\mathscr L_J \subset \mathscr W$ (resp.\! $\mathscr R_J \subset \mathscr W$) the linear span of all the reduced words whose first (resp.\! last) letter lies in $M_j \ominus B$ for some $j \in J$. For every $i \in I$, denote by $\mathscr W_i \subset \mathscr W$  the linear span of all the reduced words whose first and last letter lie in $M_i \ominus B$. We will use the following consequences of Mei--Ricard's results (see \cite[Theorem 3.5]{MR16}).

\begin{thm}[Mei--Ricard \cite{MR16}]\label{thm-Mei-Ricard}
Let $p \in (1, +\infty)$, $J \subset I$, and $i \in I$. The following assertions hold:
\begin{itemize}
\item [$(\rm i)$] The  projection map $P_{\mathscr L_J} : \mathscr W \to \mathscr L_J$  extends to a completely bounded operator $P_{\mathscr L_J} : \rL^p(M) \to \rL^p(\mathscr L_J)$. 

\item [$(\rm ii)$] The projection map $P_{\mathscr R_J} : \mathscr W \to \mathscr R_J$ extends to a completely bounded operator $P_{\mathscr R_J} : \rL^p(M) \to \rL^p(\mathscr R_J)$.

\item [$(\rm iii)$] The projection map $P_{\mathscr W_i} : \mathscr W \to \mathscr W_i$ extends to a completely bounded operator $P_{\mathscr W_i} : \rL^p(M) \to \rL^p(\mathscr W_i)$.
\end{itemize}
\end{thm}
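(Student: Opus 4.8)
The plan is to derive all three items from the single deep estimate of \cite[Theorem 3.5]{MR16}. In the notation above, that theorem provides, for each $p \in (1, +\infty)$ and each subset $J \subset I$, a finite constant $c_p$ such that the idempotent multiplier of $M$ which sends a reduced word $w = w_1 \cdots w_n$ to $\mathbf 1_{\{\varepsilon_1 \in J\}}\, w$ and sends $B$ to $0$ extends from $\mathscr W$ to a completely bounded operator on $\rL^p(M)$ of cb-norm at most $c_p$. This operator is precisely the projection map $P_{\mathscr L_J} : \mathscr W \to \mathscr L_J$ followed by the inclusion $\rL^p(\mathscr L_J) \hookrightarrow \rL^p(M)$, so $(\rm i)$ holds verbatim.

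For $(\rm ii)$, I would transport $(\rm i)$ across the adjoint operation $x \mapsto x^*$. Because $\tau$ is tracial we have $\|x^*\|_p = \|x\|_p$ for all $x \in M$ and all $p$, and the same holds after matrix amplification, so $x \mapsto x^*$ is a conjugate-linear complete isometry of $\rL^p(M)$. On reduced words $(w_1 \cdots w_n)^* = w_n^* \cdots w_1^*$ with each $w_j^* \in M_{\varepsilon_j} \ominus B$, so this operation maps $\mathscr L_J$ onto $\mathscr R_J$. Consequently $P_{\mathscr R_J}(x) = \bigl(P_{\mathscr L_J}(x^*)\bigr)^*$ defines a completely bounded operator $\rL^p(M) \to \rL^p(\mathscr R_J)$ with the same cb-norm as $P_{\mathscr L_J}$, which is $(\rm ii)$. (If \cite[Theorem 3.5]{MR16} is stated symmetrically in the first and last letters, this step is unnecessary.)

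For $(\rm iii)$, I would observe that in the reduced-word decomposition of $\rL^2(M)$ the operators $P_{\mathscr L_{\{i\}}}$ and $P_{\mathscr R_{\{i\}}}$ are both diagonal with entries in $\{0,1\}$, hence commute on the dense subspace $\mathscr W$, which each of them preserves; since both extend to bounded operators on $\rL^p(M)$ by $(\rm i)$ and $(\rm ii)$, they commute on all of $\rL^p(M)$. Their composition is then an idempotent, completely bounded on $\rL^p(M)$ with cb-norm at most $\|P_{\mathscr L_{\{i\}}}\|_{\rm cb}\,\|P_{\mathscr R_{\{i\}}}\|_{\rm cb}$, and inspection of its action on reduced words shows that it fixes exactly the reduced words whose first and last letters both lie in $M_i \ominus B$ while annihilating $B$ and every other reduced word; hence it equals $P_{\mathscr W_i}$ and has range $\rL^p(\mathscr W_i)$. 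This yields $(\rm iii)$.

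I expect no genuine obstacle here: the substantive content is entirely \cite[Theorem 3.5]{MR16}, the deep $\rL^p$-estimate for free-product multipliers, and the remaining work is bookkeeping — matching our combinatorial description of $P_{\mathscr L_J}$, $P_{\mathscr R_J}$, $P_{\mathscr W_i}$ with the multipliers treated there, and checking that conjugating by the $*$-operation and composing completely bounded maps keeps the cb-norm finite. The one mildly delicate point is that $P_{\mathscr L_{\{i\}}}$ and $P_{\mathscr R_{\{i\}}}$ commute as operators on $\rL^p(M)$ and not merely on $\mathscr W$; this follows from their boundedness together with the fact that $P_{\mathscr R_{\{i\}}}$ maps the dense subspace $\mathscr W$ into itself.
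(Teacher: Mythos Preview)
Your proposal is correct and follows essentially the same route as the paper: invoke \cite[Theorem 3.5]{MR16} for $(\rm i)$, obtain $(\rm ii)$ by symmetry, and get $(\rm iii)$ by composing the two projections. The only cosmetic difference is that the paper expresses $P_{\mathscr L_J}$ as $\frac{\Id - H_\varepsilon}{2}$ (with $H_\varepsilon$ the free Hilbert transform of \cite{MR16}, choosing $\varepsilon_j = -1$ for $j \in J$ and $\varepsilon_j = 1$ otherwise) rather than reading the idempotent off directly, and it handles $(\rm ii)$ by saying ``analogous to $(\rm i)$'' instead of your explicit adjoint trick.
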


\begin{proof}
We use the notation $H_\varepsilon$ of \cite[Section 3]{MR16}.

$(\rm i)$ For every $j \in J$, set $\varepsilon_j = - 1$ and for every $j \in I\setminus J$, set $\varepsilon_j = 1$. Then with $\varepsilon = (\varepsilon_i)_{i \in I}$, we have $P_{\mathscr L_J} = \frac{\Id - H_\varepsilon}{2}$. Therefore, \cite[Theorem 3.5]{MR16} implies that $P_{\mathscr L_J} : \mathscr W \to \mathscr L_J$ extends to a completely bounded operator $P_{\mathscr L_J} : \rL^p(M) \to \rL^p(\mathscr L_J)$. 

$(\rm ii)$ The proof  is completely analogous to Item $(\rm i)$.

$(\rm iii)$ We have $P_{\mathscr W_i} = P_{\mathscr L_{i}} \circ P_{\mathscr R_{i}} = P_{\mathscr R_{i}} \circ P_{\mathscr L_{i}}$. Therefore, Items $(\rm i)$ and $(\rm ii)$ imply that $P_{\mathscr W_i} : \mathscr W \to \mathscr W_i$ extends to a completely bounded operator $P_{\mathscr W_i} : \rL^p(M) \to \rL^p(\mathscr W_i)$.
\end{proof}

Let $P_p$ be one of the operators from Theorem \ref{thm-Mei-Ricard} (i.e., $P_{\mathscr L_J}, P_{\mathscr R_J}$, or $P_{\mathscr W_i}$). Then the operators $(P_p)_{p\in (1,+\infty)}$ are consistent with the inclusions $\rL^q(M)\subset\rL^p(M)$,  in the sense that $P_q=P_p |_{\rL^q(M)}$, for every $1<p\leq q<+\infty$. This is why we denote $P$ instead of $P_p$.

\subsection{Popa's intertwining theory}

We review Popa's criterion for intertwining von Neumann subalgebras \cite{Po01, Po03}. Let $(M, \tau)$ be a tracial von Neumann algebra and $A\subset 1_A M 1_A$, $B \subset 1_B M 1_B$ be von Neumann subalgebras. By \cite[Corollary 2.3]{Po03} and \cite[Theorem A.1]{Po01} (see also \cite[Proposition C.1]{Va06}), the following conditions are equivalent:

\begin{itemize}
\item [$(\rm i)$] There exist $n \geq 1$, a projection $q \in \mathbf M_n(B)$, a nonzero partial isometry $v \in \mathbf M_{1, n}(1_A M)q$ and a unital normal $\ast$-homomorphism $\pi : A \to q\mathbf M_n(B)q$  such that $a v = v \pi(a)$ for all $a \in A$.

\item [$(\rm ii)$] There exist projections $p \in A$ and $q \in B$, a nonzero partial isometry $v \in pMq$ and a unital normal $\ast$-homomorphism $\pi : pAp \to qBq$ such that $a v = v \pi(a)$ for all $a \in A$.

\item [$(\rm iii)$] There is no net of unitaries $(w_k)_k$ in $A$ such that
$$\forall x, y \in 1_A M 1_B, \quad \lim_k \|\rE_B(x^* w_k y)\|_2 = 0.$$
\end{itemize}

If one of the previous equivalent conditions is satisfied, we say that $A$ {\it embeds into} $B$ {\it inside} $M$ and write $A \preceq_M B$.

Following \cite{Jo82,PP84}, we say that an inclusion of tracial von Neumann algebras $P \subset M$ has {\em finite index} if $\rL^2(M, \tau)$ has finite dimension as a right $P$-module. If $A_0 \subset A$ is a von Neumann subalgebra with finite index and if $A \preceq_M B$, then $A_0 \preceq_M B$ (see \cite[Lemma 3.9]{Va07}).

We record the following new criterion for intertwining von Neumann subalgebras.

\begin{lem}\label{lem-intertwining}
Let  $(M,\tau)$ be a separable tracial von Neumann algebra and $A,B\subset M$ be von Neumann subalgebras such that $A\npreceq_MB$.
Then there exists $u\in\mathscr U(A^{\mathcal U})$ such that $\rE_{B^{\mathcal U}}(xu^my)=0$, for all $x,y\in M$ and all $m\in\mathbb Z\setminus\{0\}$.
\end{lem}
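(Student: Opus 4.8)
The goal is to produce, from the hypothesis $A \npreceq_M B$, a single unitary $u$ in the ultrapower $A^{\mathcal U}$ that is ``$B^{\mathcal U}$-orthogonal in all nonzero powers'', uniformly against two-sided multiplication by $M$. I would start from condition $(\mathrm{iii})$ in Popa's criterion (recalled just above): since $A \npreceq_M B$, for every finite set $F \subset M$ and every $\varepsilon > 0$ there exists a unitary $w \in \mathscr U(A)$ with $\|\rE_B(x^* w y)\|_2 < \varepsilon$ for all $x,y \in F$. The separability of $M$ lets me fix an increasing sequence of finite sets $F_1 \subset F_2 \subset \cdots$ whose union is $\|\cdot\|_2$-dense in the unit ball of $M$. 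The subtlety is that I need this not just for $w$ itself but for all nonzero powers $w^m$, $m \in \mathbb Z \setminus \{0\}$, and a single $w$ need not have small $\rE_B(x^* w^m y)$ for all $m$ simultaneously.

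**Getting control of all powers.** To handle powers, the natural move is to pass to the ultrapower first and iterate. Concretely, I would build, for each $n$, a unitary $w_n \in \mathscr U(A)$ such that $\|\rE_B(x^* w_n^m y)\|_2 < 1/n$ for all $x,y \in F_n$ and all $m \in \{1, \dots, n\}$ (negative powers follow by taking adjoints, since $\rE_B(x^* w_n^{-m} y) = \rE_B(y^* w_n^m x)^*$). Such a $w_n$ exists: one can realize it inductively — or more slickly, observe that if no such $w_n$ existed for some fixed $n$, then for every $w \in \mathscr U(A)$ some power $w^m$ ($1 \le m \le n$) and some $x,y \in F_n$ would witness $\|\rE_B(x^* w^m y)\|_2 \ge 1/n$; one then has to rule this out. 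The clean way is actually to work in $A^{\mathcal U}$ from the outset: by condition $(\mathrm{iii})$ there is a sequence $(w_k)_k$ in $\mathscr U(A)$ with $\lim_k \|\rE_B(x^* w_k y)\|_2 = 0$ for all $x,y$ in a countable dense set, hence $v := (w_k)^{\mathcal U} \in \mathscr U(A^{\mathcal U})$ satisfies $\rE_{B^{\mathcal U}}(xvy) = 0$ for all $x,y \in M$. But $v$ need not kill higher powers. The fix is a standard ``unitary with no nontrivial powers intertwining'' trick: replace $v$ by a Haar unitary in the abelian von Neumann algebra $\{v\}'' \subset A^{\mathcal U}$, or better, use that $A \npreceq_M B$ implies $A \npreceq_M B$ still holds after cutting, and run a diagonal argument over the powers.

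**The diagonal/reindexing step — the main obstacle.** I expect the technical heart to be the following: produce a \emph{single} sequence $(u_n)_n$ in $\mathscr U(A)$ such that for all $x,y \in M$ and all $m \in \mathbb Z \setminus \{0\}$, $\lim_{n \to \mathcal U} \|\rE_B(x u_n^m y)\|_2 = 0$. Given the family of approximating finite sets $F_n \uparrow$ dense in $(M)_1$ and the existence, for each $n$, of $u_n \in \mathscr U(A)$ with $\|\rE_B(x u_n^m y)\|_2 < 1/n$ for all $x,y \in F_n$ and $1 \le |m| \le n$ — which I will need to establish, and this is where I'd invoke condition $(\mathrm{iii})$ applied to $A \npreceq_M B$ together with the observation that if $A \npreceq_M B$ then the weak-$*$ limit argument gives unitaries whose powers also decorrelate, because failure would give $A \preceq_M B$ via $A$ being generated by elements all of whose high powers intertwine — the diagonal unitary $u = (u_n)^{\mathcal U} \in \mathscr U(A^{\mathcal U})$ does the job: for fixed $x,y \in M$ and $m \ne 0$, approximate $x,y$ in $\|\cdot\|_2$ by elements of $\bigcup_n F_n$, use that $\rE_B$ is $\|\cdot\|_2$-contractive and the $u_n$ are unitaries (so $\|u_n^m\|_\infty = 1$) to control the error, and conclude $\lim_{n \to \mathcal U} \|\rE_B(x u_n^m y)\|_2 = 0$, i.e. $\rE_{B^{\mathcal U}}(x u^m y) = 0$ by Lemma \ref{p-norm} and the compatibility of $\rE_{B^{\mathcal U}}$ with the expectations $\rE_B$ on representative sequences. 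The one genuinely delicate point is justifying the per-$n$ choice of $u_n$ handling \emph{all} powers $1 \le |m| \le n$ at once; I would argue this by contradiction, showing that its failure produces, for some fixed $n$, a net of unitaries in $A$ contradicting $A \npreceq_M B$ after passing to an appropriate power — or alternatively, by first replacing $A$ by the abelian subalgebra generated by a suitable Haar unitary of $A^{\mathcal U}$ lying in $A' \cap A^{\mathcal U}$... but cleanest is: since $A \npreceq_M B$, pick any diffuse abelian $D \subset A$ with $D \npreceq_M B$ if one exists, take a Haar unitary generating $D$; if $A$ has no such $D$ one still runs the net argument directly on powers. Finally, assemble $u = (u_n)^{\mathcal U}$; it is a unitary in $A^{\mathcal U}$ since each $u_n$ is, and the computation above gives $\rE_{B^{\mathcal U}}(x u^m y) = 0$ for all $x, y \in M$, $m \in \mathbb Z \setminus \{0\}$, as required.
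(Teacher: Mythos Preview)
Your skeleton is right --- reduce to finding, for each $n$, a unitary $u_n \in \mathscr U(A)$ with $\|\rE_B(x u_n^m y)\|_2 < 1/n$ for all $x,y \in F_n$ and $1 \le |m| \le n$, then set $u = (u_n)^{\mathcal U}$ --- but there is a genuine gap exactly at the step you flag as ``genuinely delicate''. Your proposed contradiction argument does not work: if the per-$n$ statement fails, then every $w \in \mathscr U(A)$ has \emph{some} bad power $m = m(w)$ with $1 \le |m| \le n$, but this $m$ depends on $w$, and there is no way to extract from this a single net contradicting $A \npreceq_M B$. Likewise, simply taking a Haar unitary $v$ generating a diffuse abelian $A_0 \subset A$ with $A_0 \npreceq_M B$ does not by itself control all powers of $v$ simultaneously; Popa's criterion applied to $\{v^m\}\dpr$ yields a net of unitaries in $\{v^m\}\dpr$, not the specific sequence $(v^k)_k$.

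The paper supplies the missing idea: a Ces\`aro averaging argument via von Neumann's ergodic theorem in the basic construction. With $\xi = \sum_{x \in F} x e_B x^* \in \langle M,B\rangle$ one has
\[
\varphi(v,N) := \frac{1}{N}\sum_{k=1}^N \sum_{x,y\in F}\|\rE_B(xv^ky^*)\|_2^2 = \Tr\Bigl(\bigl(\tfrac{1}{N}\sum_{k=1}^N v^k\xi v^{-k}\bigr)\xi\Bigr),
\]
and the mean ergodic theorem forces the averaged vector to converge to a $\{v\}\dpr$-commuting vector in $\rL^2(\langle M,B\rangle)$, which must vanish since $\{v\}\dpr \npreceq_M B$. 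Hence $\varphi(v,N)\to 0$. Applying this to each $v^m$ (noting $\{v^m\}\dpr \subset A_0$ has finite index, so $\{v^m\}\dpr \npreceq_M B$) and summing over $1 \le |m| \le K$ gives $\frac{1}{N}\sum_{k=1}^N \psi(v^k) \to 0$ for the full functional $\psi$ that controls all powers at once; then some $v^k$ serves as the desired $u_n$. This averaging trick is the substantive content of the lemma and is absent from your proposal.
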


\begin{proof}
To prove the lemma, it suffices to argue that for every finite subset $F\subset M$, $\varepsilon>0$ and $K\in\mathbb N$, we can find $u\in\mathscr U(A)$ such that $\|\rE_B(xu^my^*)\|_2<\varepsilon$, for all $m\in\mathbb Z\setminus\{0\}$ with $|m|\leq K$.  To this end, fix  a finite subset $F\subset M$, $\varepsilon>0$ and $K\in\mathbb N$. For $u\in\mathscr U(M)$, set $\psi(u)=\sum_{m\in \mathbb Z\setminus\{0\},|m|\leq K}\sum_{x,y\in F}\|\rE_B(xu^my^*)\|_2^2$.

Let $v\in\mathscr U(M)$  with $\{v\}\dpr \npreceq_MB$. For every $N\in\mathbb N$, set 
$$\varphi(v,N)=\frac{1}{N}\sum_{k=1}^N\sum_{x,y\in F}\|\rE_B(xv^ky^*)\|_2^2.$$ 
We claim that $\lim_{N \to \infty}\varphi(v,N)=0$. Indeed, set $\xi=\sum_{x\in F}xe_Bx^*\in\langle M,B\rangle$, where $(\langle M, B\rangle, \Tr)$ is Jones basic construction of $B\subset M$. Using that $\|\rE_B(z)\|_2^2=\text{Tr}(ze_Bz^*e_B)$ for every $z\in M$, we obtain that 
\begin{equation}\label{average}
\forall N \in \N, \quad \varphi(v,N)=\Tr \left( \left(\frac{1}{N}\sum_{k=1}^Nv^k\xi {v^{-k}} \right)\xi \right).
\end{equation}
By von Neumann's ergodic theorem, there exists $\eta\in\rL^2(\langle M,B\rangle, \Tr)$ such that $v\eta v^*=\eta$ and $\lim_{N \to \infty}\|\frac{1}{N}\sum_{k=1}^Nv^k\xi{v^{-k}}-\eta\|_{2, \Tr}=0$. Then $w\eta=\eta w$, for all $w\in\{v\}\dpr$. Since $\{v\}\dpr\npreceq_MB$, we obtain that $\eta=0$. In combination with \eqref{average}, this proves our claim that $\lim_{N\to\infty}\varphi(v,N)=0$.

We are now ready to finish the proof. Since $A\npreceq_MB$, we can find a diffuse abelian von Neumann subalgebra $A_0\subset A$ such that $A_0\npreceq_MB$ (see \cite[Corollary F.14]{BO08}).
Let $v\in\mathscr U(A_0)$ be a Haar unitary with $\{v\}\dpr=A_0$. If $m\in\mathbb Z\setminus\{0\}$, then $\{v^m\}\dpr\subset A_0$ has finite index, and thus $\{v^m\}\dpr\npreceq_MB$. The above claim gives that $\lim_{N \to \infty}\varphi(v^m,N)=0$, for all $m\in\mathbb Z\setminus\{0\}$. Thus, we can find $N\in\mathbb N$ such that $\sum_{m\in\mathbb Z\setminus\{0\},|m|\leq K}\varphi(v^m,N)<\varepsilon^2$. Since $\sum_{m\in\mathbb Z\setminus\{0\},|m|\leq K}\varphi(v^m,N)=\frac{1}{N}\sum_{k=1}^N\psi(v^k)$, we find $1\leq k\leq N$ such that $\psi(v^k)<\varepsilon^2$. Thus, $u=v^k$ satisfies the desired conclusion, which finishes the proof.
\end{proof}

%%%%%%%%%%%%%%%%%%%%%%%%%%%%

\section{Proofs of Theorems \ref{thm-main-result}, \ref{cor-rel-comm}, \ref{thm-main-result-bis}, \ref{31infinity}}\label{section-main-results}

\subsection{Popa's asymptotic orthogonality property}

Let $I$ be an at most countable index set such that $2 \leq | I | \leq +\infty$. Let $(M_i, \tau_i)_{i \in I}$ be a family of tracial von Neumann algebras with a common von Neumann subalgebra $(B, \tau_0)$ such that for every $i \in I$, we have $\tau_i|_B = \tau_0$. Denote by $(M, \tau) = \ast_{B, i \in I} (M_i, \tau_i)$ the tracial amalgamated free product von Neumann algebra. Simply denote by $\rE_B : M \to B$ (resp.\! $\rE_{B^{\mathcal U}} : M^{\mathcal U} \to B^{\mathcal U}$) the unique trace-preserving faithful normal conditional expectation.

The following lemma is a generalization of Popa's asymptotic orthogonality property (see \cite[Lemma 2.1]{Po83}) in the framework of tracial amalgamated free product von Neumann algebras. The key new feature of the proof is that we exploit Theorem \ref{thm-Mei-Ricard} to work inside the Hilbert space $\rL^2(M^{\mathcal U})$ instead of $\rL^2(M)^{\mathcal U}$ as in Popa's proof.

\begin{lem}\label{lem-commutant}
Let $i \in I$. Let $u \in \mathscr U(M_i^{\mathcal U})$ be a unitary such that $\rE_{B^{\mathcal U}}(u^k) = 0$ for every $k \in \Z \setminus \{0\}$. For every $x = (x_n)^{\mathcal U} \in \{u\}' \cap M^{\mathcal U}$ such that $\rE_{B^{\mathcal U}}(x) = 0$, we have $\lim_{n \to \mathcal U} \|x_n - P_{\mathscr W_i}(x_n)\|_2 = 0$.
\end{lem}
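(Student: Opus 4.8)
The plan is to exploit $\rL^p$-boundedness of the projection $P_{\mathscr W_i}$ (Theorem \ref{thm-Mei-Ricard}(iii)) to run Popa's asymptotic orthogonality argument directly inside $\rL^2(M^{\mathcal U})$. Fix representatives $x = (x_n)^{\mathcal U}$ with each $x_n \in M$ uniformly bounded, and a representative $u = (u_n)^{\mathcal U}$ with each $u_n \in M_i$ unitary (using that $M_i^{\mathcal U}$-elements lift with entries in $M_i$). Since $\rE_{B^{\mathcal U}}(x) = 0$ we may assume $\rE_B(x_n) = 0$, and since $\rE_{B^{\mathcal U}}(u^k) = 0$ for all $k \neq 0$ we have $\lim_{n\to\mathcal U}\|\rE_B(u_n^k)\|_2 = 0$ for each fixed $k$; by a diagonal argument we can arrange $\|\rE_B(u_n^k)\|_2$ small for all $|k|\le N_n$ with $N_n \to \infty$. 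Write $\xi_n = x_n - P_{\mathscr W_i}(x_n) \in \rL^2(M)$; the goal is $\lim_{n\to\mathcal U}\|\xi_n\|_2 = 0$. Note $\xi = (\xi_n)^{\mathcal U}$ is the projection of $x$ onto the orthocomplement of $\mathbf X_i$-type vectors, and by Lemma \ref{lem-ultraproduct}(iii) multiplication by $u$ on the left and $u^*$ on the right is well-defined on such ultraproduct vectors.

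The core computation is the following. Decompose $\rL^2(M)\ominus B = \rL^2(\mathscr W_i) \oplus \rL^2(\mathscr W_i^\perp)$, where $\mathscr W_i^\perp$ is spanned by reduced words whose first or last letter lies outside $M_i\ominus B$. For a word $w$ with first letter not in $M_i \ominus B$, left multiplication by $u_n^k$ (a word in $M_i$) does not cancel and lands in a "longer" or differently-typed word; the point is that $\{u_n^k w : k \in \Z\}$ are nearly orthogonal to each other for $w \in \mathscr W_i^\perp$, up to error controlled by $\|\rE_B(u_n^j)\|_2$ for small $j$. Concretely one estimates $\langle u_n^k w, u_n^l w'\rangle$ and shows it is small unless $k = l$. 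Consequently, for $\eta_n \in \rL^2(\mathscr W_i^\perp)$ the averages $\frac{1}{N}\sum_{k=1}^N u_n^k \eta_n u_n^{-k}$ have $\|\cdot\|_2$ tending to $0$ as $N \to \infty$ (after passing to the ultraproduct, using $N_n \to \infty$). Since $x$ commutes with $u$, the vector $\xi = (\xi_n)^{\mathcal U}$ — which is $x - (P_{\mathscr W_i}(x_n))^{\mathcal U}$ — should satisfy $u\xi u^* = \xi$ modulo the $\mathscr W_i$-part; here is where one must be careful, because $P_{\mathscr W_i}$ need not commute with the $u$-action at the level of individual $n$, and this is exactly where Theorem \ref{thm-Mei-Ricard} enters: $P_{\mathscr W_i}$ extends boundedly to $\rL^p(M)$ for $p > 2$, so one can play the Hölder game of Lemma \ref{lem-ultraproduct}(ii) to show $(P_{\mathscr W_i}(x_n))^{\mathcal U}$ lies in $\rL^2(M^{\mathcal U})$ and that $u \cdot (P_{\mathscr W_i} x_n)^{\mathcal U} \cdot u^* = (P_{\mathscr W_i}(u_n x_n u_n^*))^{\mathcal U}$, reducing the commutation relation for $x$ to an honest $u$-invariance statement for $\xi$ inside $\rL^2(M^{\mathcal U})$.

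Putting these together: $\xi$ is $\Ad(u)$-invariant, $\xi \in \rL^2(M^{\mathcal U})$, and $\xi$ is (a limit of vectors) in $\rL^2(\mathscr W_i^\perp)$. By the averaging estimate, $\Ad(u)$-invariance forces $\|\xi\|_2 = \lim_{N}\|\frac1N\sum_{k=1}^N u^k \xi u^{-k}\|_2 = 0$, which is exactly the claim. The main obstacle, and the reason the hypothesis "$\rE_{B^{\mathcal U}}(u^k) = 0$ for all $k \ne 0$" rather than just $k = \pm 1$ is needed, is controlling the near-orthogonality of $\{u_n^k \eta_n u_n^{-k}\}_k$ uniformly in $k$ up to $N_n$: one must track how much a power $u_n^k$ of an almost-Haar unitary in $M_i$ fails to be trace-orthogonal to $B$, and feed this into a van der Corput / ergodic-type estimate. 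The secondary technical point — genuinely using the $\rL^p$-theory — is justifying that $(P_{\mathscr W_i}(x_n))^{\mathcal U}$ defines an element of $\rL^2(M^{\mathcal U})$ and intertwines the $\Ad(u)$-action, since without $\rL^p$-boundedness of $P_{\mathscr W_i}$ one has no control on $\|P_{\mathscr W_i}(x_n)\|_\infty$ and the projection could fail to behave well under the ultraproduct; this is precisely the place where Popa's original argument (which worked in $\rL^2(M)^{\mathcal U}$) must be upgraded.
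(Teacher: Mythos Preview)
Your overall architecture is right---use Mei--Ricard's $\rL^p$-boundedness to land in $\rL^2(M^{\mathcal U})$ and then run Popa's asymptotic orthogonality---but there are two genuine gaps in the execution.

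First, the identity $u\cdot (P_{\mathscr W_i}(x_n))^{\mathcal U}\cdot u^* = (P_{\mathscr W_i}(u_n x_n u_n^*))^{\mathcal U}$ is false, and $\rL^p$-boundedness does not rescue it. Lemma~\ref{lem-ultraproduct}(iii) only gives $u\cdot (P_{\mathscr W_i}(x_n))^{\mathcal U}\cdot u^* = (u_n P_{\mathscr W_i}(x_n) u_n^*)^{\mathcal U}$, and $P_{\mathscr W_i}$ does not commute with $\Ad(u_n)$ even approximately: if $w\in\mathscr W_i$ has first letter $a\in M_i\ominus B$, then $u_n a$ typically has a nonzero $B$-component, so $u_n w u_n^*$ leaks out of $\mathscr W_i$. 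Consequently your $\xi$ is \emph{not} $\Ad(u)$-invariant, and the averaging step $\|\xi\|_2 = \lim_N\|\tfrac1N\sum_{k=1}^N u^k\xi u^{-k}\|_2$ collapses.

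Second, the claimed near-orthogonality of $\{u^k\eta u^{-k}\}_k$ fails for $\eta$ in the full complement $\mathscr W_i^\perp$. If $\eta\in\mathscr L_i\cap\mathscr R_{I\setminus\{i\}}$ (first letter in $M_i\ominus B$, last letter outside), then conjugation by $u_{n,k}\in M_i\ominus B$ produces a nonzero component back in $\mathscr L_{I\setminus\{i\}}\subset\mathscr W_i^\perp$, so the conjugates overlap.

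The paper's proof sidesteps both issues. It reduces to showing $\lim_{n\to\mathcal U}\|P_{\mathscr L_{I\setminus\{i\}}}(x_n)\|_2 = 0$ (and symmetrically for $\mathscr R_{I\setminus\{i\}}$), working only with the \emph{one-sided} subspace $\mathscr H_i = \rL^2(M^{\mathcal U})\cap (\mathscr L_{I\setminus\{i\}})^{\mathcal U}$. No invariance is claimed; instead one uses the Hilbert-space identity $u^k P_{\mathscr H_i}(x)u^{-k} = P_{u^k\mathscr H_i u^{-k}}(u^k x u^{-k}) = P_{u^k\mathscr H_i u^{-k}}(x)$ (the last equality from $[u,x]=0$), together with the claim that the subspaces $(u^k\mathscr H_i u^{-k})_{k\in\Z}$ are mutually orthogonal. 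For $\mathscr L_{I\setminus\{i\}}$ this orthogonality is clean: writing $u^k=(u_{n,k})^{\mathcal U}$ with each $u_{n,k}\in M_i\ominus B$ \emph{exactly} (not $u_n^k$), left multiplication of any word starting outside $M_i$ by $u_{n,k}$ yields a word starting in $M_i\ominus B$, so $u_{n,k}\xi_n u_{n,k}^*\in\rL^2(\mathscr L_i)\perp\rL^2(\mathscr L_{I\setminus\{i\}})$. A Bessel-type inequality then gives $N\|P_{\mathscr H_i}(x)\|_2^2\le\|x\|_2^2$ for all $N$, forcing $P_{\mathscr H_i}(x)=0$.
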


\begin{proof}
Let $x = (x_n)^{\mathcal U} \in \{u\}' \cap M^{\mathcal U}$ such that $\rE_{B^{\mathcal U}}(x) = 0$. Without loss of generality, we may assume that $\|x_n\|_\infty \leq 1$ for every $n \in \N$. To prove that $\lim_{n \to \mathcal U} \|x_n - P_{\mathscr W_i}(x_n)\|_2 = 0$, we show that $\lim_{n \to \mathcal U} \|P_{\mathscr L_{I\setminus \{i\}}}(x_n)\|_2 = \lim_{n \to \mathcal U} \|P_{\mathscr R_{I\setminus \{i\}}}(x_n)\|_2 = 0$. Since $\mathscr R_{I \setminus \{i\}} = J \mathscr L_{I \setminus \{i\}} J$, it suffices to prove that $\lim_{n \to \mathcal U} \|P_{\mathscr L_{I\setminus \{i\}}}(x_n)\|_2 = 0$. To simplify the notation, we set $P_i = P_{\mathscr L_{I\setminus \{i\}}}$.

By Lemma \ref{lem-ultraproduct}$(\rm ii)$ and Theorem \ref{thm-Mei-Ricard}$(\rm i)$, we have $(P_i(x_n))^{\mathcal U} \in \rL^2(M^{\mathcal U})$. Set $\mathscr H_i = \rL^2(M^{\mathcal U}) \cap (\mathscr L_{I \setminus \{i\}})^{\mathcal U} \subset \rL^2(M^{\mathcal U})$ and denote by $P_{\mathscr H_i} : \rL^2(M^{\mathcal U}) \to \mathscr H_i$ the corresponding orthogonal projection. Then we have $P_{\mathscr H_i}(x) = (P_i(x_n))^{\mathcal U} \in \mathscr H_i$. 
For every $N \geq 1$, we have
\begin{align}\label{eq-parallelogram}
N \cdot \| P_{\mathscr H_i}(x)\|^2_2 &= \sum_{k =1}^N  \|u^k P_{\mathscr H_i}(x) u^{-k}\|^2_2 \\  \nonumber
&= \sum_{k =1}^N  \|P_{u^{k}\mathscr H_i u^{-k}}(u^k x u^{-k})\|^2_2 \\ \nonumber
&= \sum_{k =1}^N  \|P_{ u^{k}\mathscr H_i u^{-k}}(x)\|^2_2.
\end{align}
We claim that the Hilbert subspaces $( u^{k}\mathscr H_i u^{-k} )_{k \in \Z}$ are mutually orthogonal in $\rL^2(M^{\mathcal U})$ i.e.\! for every $k \in \Z \setminus \{0\}$, $ u^{k}\mathscr H_i u^{-k}$ and $ \mathscr H_i $ are orthogonal in $\rL^2(M^{\mathcal U})$. Indeed, for every $k \in \Z \setminus \{0\}$, since $\rE_{B^{\mathcal U}}(u^k) = 0$, we may write $u^k = (u_{n, k})^{\mathcal U} \in M_i^{\mathcal U}$ where $(u_{n, k})_{n}$ is a $\|\cdot\|_\infty$-bounded sequence in $M_i \ominus B$. Let $\xi = (\xi_n)^{\mathcal U} \in \mathscr H_i$ and $\eta = (\eta_n)^{\mathcal U} \in \mathscr H_i$ where $(\xi_n)_n$ and $(\eta_n)_n$ are $\|\cdot\|_2$-bounded sequences in $\mathscr L_{I \setminus \{i\}}$. By construction, it is plain to see that for all $n \in \N$ and all $k \in \Z \setminus \{0\}$, the vectors $u_{n, k}  \xi_n u_{n, k}^*$ and $\eta_n$ are orthogonal in $\rL^2(M)$. By Lemma \ref{lem-ultraproduct}$(\rm iii)$, since $\xi = (\xi_n)^{\mathcal U} \in \rL^2(M^{\mathcal U})$, we have 
$$\langle u^k \xi u^{-k} , \eta \rangle = \langle (u_{n, k}  \xi_n u_{n, k}^*)^{\mathcal U} , (\eta_n)^{\mathcal U} \rangle = \lim_{n \to \mathcal U} \langle u_{n, k} \xi_n u_{n, k}^* , \eta_n \rangle = 0.$$
This finishes the proof of the claim.

From \eqref{eq-parallelogram}, since the projections $(P_{u^k \mathscr H_i u^{-k}})_{k \in \Z}$ are mutually orthogonal, we infer that 
\begin{equation}\label{eq-conclusion}
N \cdot \| P_{\mathscr H_i}(x)\|^2_2 = \sum_{k =1}^N  \|P_{ u^{k}\mathscr H_i u^{-k}}(x)\|^2_2 \leq  \|x\|_2^2.
\end{equation}
Since \eqref{eq-conclusion} holds for every $N \geq 1$, we have $\lim_{n \to \mathcal U} \|P_i(x_n)\|_2 = \| P_{\mathscr H_i}(x) \|_2 = 0$. This finishes the proof of the lemma.
\end{proof}

Using a $2 \times 2$ matrix trick, we obtain the following extension of Lemma \ref{lem-commutant}.

\begin{lem}\label{lem-commutant-bis}
Let $i \in I$. Let $u \in \mathscr U(M_i^{\mathcal U})$ be a unitary such that $\rE_{B^{\mathcal U}}(v u^k v^*) = 0$ for every $k \in \Z \setminus \{0\}$ and every $v \in \mathscr U(M_i)$. For every $x = (x_n)^{\mathcal U} \in \{u\}' \cap M^{\mathcal U}$ such that $\rE_{M_i^{\mathcal U}}(x) = 0$ and every $y, z \in M_i$, we have $\lim_{n \to \mathcal U} \|y x_n z - P_{\mathscr W_i}(y x_n z)\|_2 = 0$.
\end{lem}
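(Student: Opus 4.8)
\textbf{Proof proposal (for Lemma~\ref{lem-commutant-bis}).}
The plan is to deduce the statement from Lemma~\ref{lem-commutant} by a $2\times 2$ matrix amplification of the amalgamated free product. First, since $a\mapsto yaz$ and $P_{\mathscr W_i}$ are linear and the norm is subadditive, it suffices to treat the case where $y,z\in\mathscr U(M_i)$ are unitaries.

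Consider $\tilde M=\mathbf M_2(\C)\otimes M$ with trace $\tilde\tau=\tr\otimes\tau$ ($\tr$ normalized on $\mathbf M_2(\C)$), and set $\tilde M_j=\mathbf M_2(\C)\otimes M_j$, $\tilde B=\mathbf M_2(\C)\otimes B$. Amplifying reduced words entrywise shows that the family $(\tilde M_j\ominus\tilde B)_{j\in I}$ is freely independent with respect to $\rE_{\tilde B}=\id\otimes\rE_B$, so $\tilde M=\ast_{\tilde B,j\in I}(\tilde M_j,\tilde\tau_j)$; moreover $\tilde M^{\mathcal U}=\mathbf M_2(\C)\otimes M^{\mathcal U}$, $\tilde M_i^{\mathcal U}=\mathbf M_2(\C)\otimes M_i^{\mathcal U}$, and $\rE_{\tilde B^{\mathcal U}}=\id\otimes\rE_{B^{\mathcal U}}$. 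The same entrywise analysis identifies $\rL^2(\tilde{\mathscr W_i})$ with $\mathbf M_2(\C)\otimes\rL^2(\mathscr W_i)$ inside $\mathbf M_2(\C)\otimes\rL^2(M)$: a reduced word in $\tilde M$ whose first and last letters lie in $\tilde M_i\ominus\tilde B$ has all entries in $\mathscr W_i$, and conversely, for a reduced word $w=w_1\cdots w_n\in\mathscr W_i$ one writes $e_{pq}\otimes w=(e_{pq}\otimes w_1)(1\otimes w_2)\cdots(1\otimes w_n)$, a reduced word of $\tilde M$ lying in $\tilde{\mathscr W_i}$. Consequently $P_{\tilde{\mathscr W_i}}=\id\otimes P_{\mathscr W_i}$.

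Now let $x=(x_n)^{\mathcal U}\in\{u\}'\cap M^{\mathcal U}$ with $\rE_{M_i^{\mathcal U}}(x)=0$ (and $\|x_n\|_\infty\leq 1$). I would set $\tilde x=e_{12}\otimes(yxz)$, lifted by $\tilde x_n=e_{12}\otimes(yx_nz)\in\tilde M$, and $\tilde u=e_{11}\otimes yuy^*+e_{22}\otimes zuz^*\in\mathscr U(\tilde M_i^{\mathcal U})$. A direct computation gives $\tilde u\,\tilde x\,\tilde u^*=e_{12}\otimes y(uxu^*)z=\tilde x$, so $\tilde x\in\{\tilde u\}'\cap\tilde M^{\mathcal U}$. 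For $k\in\Z\setminus\{0\}$ we have $\tilde u^k=e_{11}\otimes yu^ky^*+e_{22}\otimes zu^kz^*$, hence $\rE_{\tilde B^{\mathcal U}}(\tilde u^k)=0$ by the hypothesis applied to $v=y$ and $v=z$; and $\rE_{\tilde B^{\mathcal U}}(\tilde x)=e_{12}\otimes\rE_{B^{\mathcal U}}\big(y\,\rE_{M_i^{\mathcal U}}(x)\,z\big)=0$ by the tower property and $M_i$-bimodularity of $\rE_{M_i^{\mathcal U}}$. Thus Lemma~\ref{lem-commutant}, applied in $\tilde M=\ast_{\tilde B,j\in I}(\tilde M_j,\tilde\tau_j)$ to the index $i$, the unitary $\tilde u$, and the element $\tilde x$, gives $\lim_{n\to\mathcal U}\|\tilde x_n-P_{\tilde{\mathscr W_i}}(\tilde x_n)\|_{2,\tilde\tau}=0$. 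Since $P_{\tilde{\mathscr W_i}}=\id\otimes P_{\mathscr W_i}$ we have $\tilde x_n-P_{\tilde{\mathscr W_i}}(\tilde x_n)=e_{12}\otimes\big(yx_nz-P_{\mathscr W_i}(yx_nz)\big)$, and $\|e_{12}\otimes c\|_{2,\tilde\tau}^2=\tfrac12\|c\|_2^2$; therefore $\lim_{n\to\mathcal U}\|yx_nz-P_{\mathscr W_i}(yx_nz)\|_2=0$, as desired.

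I expect the only genuinely delicate point to be the bookkeeping identity $P_{\tilde{\mathscr W_i}}=\id\otimes P_{\mathscr W_i}$, i.e.\ verifying that $\tilde{\mathscr W_i}$ is exactly the space of matrices over $\mathscr W_i$; everything else is routine. The role of $\mathbf M_2(\C)$ is precisely to absorb the two-sided multiplications by $y$ and $z$ into a single commutation relation with a \emph{diagonal} unitary $\tilde u$ whose powers still have vanishing $\tilde B^{\mathcal U}$-conditional expectation --- this is exactly why the hypotheses must be strengthened to $\rE_{B^{\mathcal U}}(vu^kv^*)=0$ for all $v\in\mathscr U(M_i)$ and to $\rE_{M_i^{\mathcal U}}(x)=0$.
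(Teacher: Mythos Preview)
Your approach is exactly the paper's: reduce to unitaries $y,z\in\mathscr U(M_i)$, pass to the $2\times 2$ amplification $\mathbf M_2(M)=\ast_{\mathbf M_2(B),\,j\in I}\mathbf M_2(M_j)$, and apply Lemma~\ref{lem-commutant} to a diagonal unitary and an off-diagonal element. There is, however, a slip in your choice of $\tilde u$. With $\tilde u=e_{11}\otimes yuy^*+e_{22}\otimes zuz^*$ one computes
\[
\tilde u\,\tilde x\,\tilde u^*
= e_{12}\otimes\big(yuy^*\cdot yxz\cdot zu^*z^*\big)
= e_{12}\otimes yux\,z^2u^*z^*,
\]
which is not $e_{12}\otimes yxz$ in general (nothing forces $u$ to commute with $z$, nor $z^2=1$). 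The correct $(2,2)$-entry is $z^*uz$, i.e.\ $\tilde u=e_{11}\otimes yuy^*+e_{22}\otimes z^*uz$; this is precisely the paper's choice $U=\mathrm{diag}(vuv^*,\,w^*uw)$ with $v=y$, $w=z$. Then $\tilde u\,\tilde x=e_{12}\otimes yuxz$ and $\tilde x\,\tilde u=e_{12}\otimes yxz\cdot z^*uz=e_{12}\otimes yxuz$, which agree since $ux=xu$. The hypothesis still applies to the $(2,2)$-entry because $z^*u^kz=(z^*)u^k(z^*)^*$ with $z^*\in\mathscr U(M_i)$. With this correction the remainder of your argument (including the identification $P_{\tilde{\mathscr W_i}}=\id\otimes P_{\mathscr W_i}$) goes through and matches the paper's proof.
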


\begin{proof}
Set $\mathscr B = \mathbf M_2(B)$, $\mathscr M_j = \mathbf M_2(M_j)$ for every $j \in I$ and $\mathscr M = \mathbf M_2(M)$ so that we have $\mathscr M = \ast_{\mathscr B, j \in I} \mathscr M_j$. Let $x = (x_n)^{\mathcal U} \in \{u\}' \cap M^{\mathcal U}$ be such that $\rE_{M_i^{\mathcal U}}(x) = 0$. Since any element of $M_i$ is a linear combination of at most four unitaries of $M_i$, it suffices to prove that for every $v, w \in \mathscr U(M_i)$, we have $\lim_{n \to \mathcal U} \|v x_n w - P_{\mathscr W_i}(v x_n w)\|_2 = 0$. 

Let $v, w \in \mathscr U(M_i)$. Set 
$$U = \begin{pmatrix}
v uv^* & 0 \\
0 & w^* u w
\end{pmatrix} \in \mathscr U(\mathscr M_i^{\mathcal U})  \quad \text{and} \quad X = \begin{pmatrix}
0 & v x w \\
0 & 0
\end{pmatrix} \in \mathscr M^{\mathcal U} \ominus \mathscr M_i^{\mathcal U}.$$
By construction, we have $U X = X U$ and $\rE_{\mathscr B^{\mathcal U}}(U^k) = 0$ for every $k \in \Z \setminus \{0\}$. We may now apply Lemma \ref{lem-commutant} to $X = (X_n)^{\mathcal U} \in \mathscr M^{\mathcal U} \ominus \mathscr B^{\mathcal U}$ and conclude that 
$$\lim_{n \to \mathcal U} \|v x_n w - P_{\mathscr W_i}(v x_n w)\|_2 = \lim_{n \to \mathcal U} \| (X_n)_{12} - P_{\mathscr W_i}((X_n)_{12})\|_2 = 0.$$
This finishes the proof of the lemma.
\end{proof}

\subsection{Proofs of Theorem \ref{thm-main-result}, Theorem \ref{cor-rel-comm}, Theorem \ref{thm-main-result-bis}}

\begin{proof}[Proof of Theorem \ref{thm-main-result}]
Keep the same notation as in the statement of Theorem \ref{thm-main-result}. Let $k \geq 1$ and $\varepsilon_1, \dots, \varepsilon_k \in I$ be such that $\varepsilon_1 \neq \cdots \neq \varepsilon_k$. For every $1 \leq j \leq k$, let $x_j = (x_{j, n})^{\mathcal U} \in \mathbf X_{\varepsilon_j}$. We may assume that $\sup \left\{\|x_{j, n}\|_\infty \mid 1 \leq j \leq k, n \in \N \right\} \leq 1$. We show that $\rE_{B^{\mathcal U}}(x_1 \cdots x_k) = 0$. 

For every $i \in I$ and every $p \in (1, +\infty)$, we simply denote by $P_i : \rL^p(M) \to \rL^p(\mathscr W_i)$ the completely bounded operator (see Theorem \ref{thm-Mei-Ricard}$(\rm iii)$). For every $p \in (1, +\infty)$, choose $\kappa_p > 0$ large enough so that $$\sup \left\{\|P_i(x)\|_p \mid i \in \{\varepsilon_1, \dots, \varepsilon_k\}, x \in \rL^p(M), \|x\|_p \leq 1 \right\} \leq \kappa_p.$$
Fix $1 < r < 2$ (e.g.,\! $r = \frac32$). Let $p \in (1, +\infty)$ be such that $\frac1r = \frac12 + \frac{k - 1}{p}$. For every $1 \leq j \leq k$ and every $n \in \N$, write $x_{j, n} = P_{\varepsilon_j}(x_{j, n}) + (x_{j, n} - P_{\varepsilon_j}(x_{j, n}))$ and observe that 
\begin{itemize}
\item $P_{\varepsilon_j}(x_{j, n}) \in \rL^2(\mathscr W_{\varepsilon_j})$ and $\lim_{n \to \mathcal U} \|x_{j, n} - P_{\varepsilon_j}(x_{j, n})\|_2 = 0$;
\item $P_{\varepsilon_j}(x_{j, n}) \in \rL^p(\mathscr W_{\varepsilon_j})$ and $\max \left\{ \|P_{\varepsilon_j}(x_{j, n}) \|_{p}, \|x_{j, n} - P_{\varepsilon_j}(x_{j, n})\|_p \right\} \leq 1 + \kappa_p$. 
\end{itemize}
For every $n \in \N$, we may write $x_{1, n} \cdots x_{k, n} - P_{\varepsilon_1}(x_{1, n}) \cdots P_{\varepsilon_k}(x_{k, n})$ as a sum of $2^k - 1$ terms that are products of length $k$ for which at least one of the factors is of the form $x_{j, n} - P_{\varepsilon_j}(x_{j, n})$ for some $1 \leq j \leq k$. For every $n \in \N$, using the triangle inequality and the generalized noncommutative H\"older inequality, we obtain
$$
\|x_{1, n} \cdots x_{k, n} - P_{\varepsilon_1}(x_{1, n}) \cdots P_{\varepsilon_k}(x_{k, n})\|_r \leq (2^{k} - 1) (1 + \kappa_p)^{k - 1} \max_j \|x_{j, n} - P_{\varepsilon_j}(x_{j, n})\|_2.
$$
This implies that 
\begin{equation}\label{eq-1}
\lim_{n \to \mathcal U} \|x_{1, n} \cdots x_{k, n} - P_{\varepsilon_1}(x_{1, n}) \cdots P_{\varepsilon_k}(x_{k, n})\|_r = 0.
\end{equation}

Next, set $q = kr $ so that $\frac1r = \frac{k}{q}$. For every $1 \leq j \leq k$ and every $n \in \N$, since $P_{\varepsilon_j}(x_{j, n}) \in \rL^q(\mathscr W_{\varepsilon_j})$, we may choose $w_{j, n} \in \mathscr W_{\varepsilon_j}$ such that $\|P_{\varepsilon_j}(x_{j, n}) - w_{j, n}\|_q \leq \frac{1}{n + 1}$.  For every $1 \leq j \leq k$ and every $n \in \N$, write $P_{\varepsilon_j}(x_{j, n}) = w_{j, n} + (P_{\varepsilon_j}(x_{j, n}) - w_{j, n})$ and  observe that 
\begin{itemize}
\item $\max \left\{ \|w_{j, n} \|_{q}, \|P_{\varepsilon_j}(x_{j, n}) - w_{j, n} \|_q \right\} \leq 1 + \kappa_q$. 
\end{itemize}
We may then write $P_{\varepsilon_1}(x_{1, n}) \cdots P_{\varepsilon_k}(x_{k, n}) - w_{1, n} \cdots w_{k, n}$ as a sum of $2^k - 1$ terms that are products of length $k$ for which at least one of the factors is of the form $P_{\varepsilon_j}(x_{j, n}) - w_{j, n}$ for some $1 \leq j \leq k$. For every $n\in \N$, using the triangle inequality and the generalized noncommutative H\"older inequality, we obtain
$$
\|P_{\varepsilon_1}(x_{1, n}) \cdots P_{\varepsilon_k}(x_{k, n}) - w_{1, n} \cdots w_{k, n}\|_r \leq (2^{k} - 1) (1 + \kappa_q)^{k - 1} \max_j \|P_{\varepsilon_j}(x_{j, n}) - w_{j, n}\|_q.
$$
This implies that 
\begin{equation}\label{eq-2}
\lim_{n \to \mathcal U} \| P_{\varepsilon_1}(x_{1, n}) \cdots P_{\varepsilon_k}(x_{k, n}) - w_{1, n} \cdots w_{k, n}\|_r = 0.
\end{equation}

By combining \eqref{eq-1} and \eqref{eq-2}, it follows that 
\begin{equation}\label{eq-3}
\lim_{n \to \mathcal U} \| x_{1, n} \cdots x_{k, n} - w_{1, n} \cdots w_{k, n}\|_r = 0.
\end{equation}
In particular, since $\rE_{B^{\mathcal U}} (x_1 \cdots x_k)=(\rE_B (x_{1, n} \cdots x_{k, n}))^{\mathcal U}$, using Lemma \ref{p-norm} and the fact that $\rE_B$ is $\|\cdot\|_r$-contractive,
we have  
$$\|\rE_{B^{\mathcal U}} (x_1 \cdots x_k) \|_r = \lim_{n \to \mathcal U} \|\rE_B (x_{1, n} \cdots x_{k, n}) \|_r = \lim_{n \to \mathcal U} \|\rE_B (w_{1, n} \cdots w_{k, n}) \|_r.$$
Since for every $1 \leq j \leq k$ and every $n \in \N$, we have $w_{j, n} \in \mathscr W_{\varepsilon_j}$ and since $\varepsilon_1 \neq \cdots \neq  \varepsilon_k$, it follows that $\rE_B(w_{1, n} \cdots w_{k, n}) = 0$. Thus, we obtain $\rE_{B^{\mathcal U}}(x_1 \cdots x_k) =0$. 
\end{proof}

\begin{rem}
We were informed by Sorin Popa that he and Stefaan Vaes had recently made the following observation. In the case $\rL(\F_2) = A_1 \ast A_2$ is a free group factor with $A_1 \cong A_2 \cong \rL(\Z)$, they showed that $A_1' \cap \rL(\F_2)^{\mathcal U}$ and $A_2$ are freely independent in $\rL(\F_2)^{\mathcal U}$ with respect to $\tau^{\mathcal U}$.
\end{rem}

We obtain the following consequence of Theorem \ref{thm-main-result} which implies Theorem \ref{cor-rel-comm}.

\begin{thm}\label{cor-main-result}
Assume that $B = \C 1$. For every $i \in I$, let $(A_{i, k})_{k \in \N}$ be a decreasing sequence of separable diffuse abelian von Neumann subalgebras of $M_i^{\mathcal U}$ such that $\bigcap_{k = 1}^\infty A_{i, k} = \C 1$. 

Then for every $i \in I$, $\mathscr M_i = \bigvee_{k = 1}^\infty (A_{i, k}' \cap M^{\mathcal U}) \subset M^{\mathcal U}$ is a nonamenable irreducible subfactor with property Gamma. Moreover, the family $(\mathscr M_i)_{i \in I}$ is freely independent in $M^{\mathcal U}$ with respect to $\tau^{\mathcal U}$. 
\end{thm}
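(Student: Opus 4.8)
The plan is to deduce everything from Theorem \ref{thm-main-result} together with Lemma \ref{lem-commutant-bis} (the $2\times 2$-matrix strengthening of Popa's asymptotic orthogonality). First I would record the key membership fact: for each $i$ and each $k$, if $u_{i,k}\in\mathscr U(A_{i,k})$ is a Haar unitary generating $A_{i,k}$, then since $A_{i,k}$ is diffuse abelian and $B=\mathbb C1$, every power $u_{i,k}^m$ with $m\neq 0$ has $\tau^{\mathcal U}(u_{i,k}^m)=0$, and in fact $\tau^{\mathcal U}(vu_{i,k}^mv^*)=0$ for all $v\in\mathscr U(A_{i,k})$ by commutativity. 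Hence Lemma \ref{lem-commutant-bis} applies (with $M_i$ there replaced by $A_{i,k}$ via a routine adaptation, or directly: for $x\in\{u_{i,k}\}'\cap M^{\mathcal U}$ with $\tau^{\mathcal U}(x)=0$ we get $x\in\mathbf X_i$, and moreover $a\,x\,b\in\mathbf X_i$ whenever $a,b$ lie in a suitable subalgebra). In particular $A_{i,k}'\cap M^{\mathcal U}\ominus\mathbb C1\subset\mathbf X_i$, and since $\mathscr M_i$ is generated by the increasing union of the $A_{i,k}'\cap M^{\mathcal U}$, and $\mathbf X_i$ is a $\|\cdot\|_2$-closed subspace stable under the relevant products, one checks $\mathscr M_i\ominus\mathbb C1\subset\mathbf X_i$. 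Theorem \ref{thm-main-result} then immediately gives that $(\mathscr M_i)_{i\in I}$ is freely independent in $M^{\mathcal U}$ with respect to $\tau^{\mathcal U}$.

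For the structural claims about a single $\mathscr M_i$, I would argue as follows. Property Gamma: each $A_{i,k}\subset\mathscr M_i$ is diffuse abelian, so a Haar unitary generating $A_{i,k}$ is a nontrivial central sequence for $\mathscr M_i$ — indeed it commutes with all of $A_{i,k}'\cap M^{\mathcal U}\supset\bigcup_{k}(A_{i,k}'\cap M^{\mathcal U})$, hence with $\mathscr M_i$; since $\mathcal U$ is an ultrafilter on $\N$, a standard reindexing (or just the fact that $A_{i,k}^{\mathcal U}$ sits inside $\mathscr M_i^{\mathcal U}$) upgrades this to genuine nontrivial central sequences, so $\mathscr M_i$ has property Gamma, in particular is not a full factor and is nonamenable once we know it is a $\mathrm{II}_1$ factor (alternatively nonamenability follows because a Haar unitary together with a free complement generates a copy of $\mathrm L(\mathbb F_2)$ inside $M^{\mathcal U}$ — this uses the free independence just established with another $\mathscr M_j$, $j\neq i$). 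Irreducibility, i.e. $\mathscr M_i'\cap M^{\mathcal U}=\mathbb C1$: here is where I expect the main work. If $z\in\mathscr M_i'\cap M^{\mathcal U}$ with $\tau^{\mathcal U}(z)=0$, then $z$ commutes with every $A_{j,k}'\cap M^{\mathcal U}$, in particular with elements of $\mathscr M_i$ that, by the previous paragraph, lie in $\mathbf X_i$; but also $z\in A_{i,k}'\cap M^{\mathcal U}$ for every $k$ (since $A_{i,k}\subset\mathscr M_i$), so $z\in\mathbf X_i$ as well. Two elements of $\mathbf X_i$ that commute need not be a contradiction by itself, so to conclude one exhibits $y\in\mathscr M_j$ for some $j\neq i$ with $\tau^{\mathcal U}(y)=0$: then $z$ and $y$ are freely independent (as $\mathscr M_i$ and $\mathscr M_j$ are free and $z\in\mathscr M_i$), hence $\tau^{\mathcal U}(zyzy)=0$ forces... actually the cleanest route is: $z$ is free from $\mathscr M_j$, and $z\in\mathscr M_i$; freeness of $\{z\}''$ and $\mathscr M_j$ forces that if additionally $z$ commutes with some diffuse part of $\mathscr M_j$ we'd reach a contradiction — but $z$ need not commute with $\mathscr M_j$. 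So instead I would prove irreducibility by noting $\mathscr M_i'\cap M^{\mathcal U}$ centralizes $A_{i,k}$ for all $k$, hence is contained in $\bigcap_k (A_{i,k}'\cap M^{\mathcal U})''$... this is circular.

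The honest approach to irreducibility: show $\mathscr M_i'\cap M^{\mathcal U}\subset\mathscr M_i$ and is central, then use property Gamma's factoriality separately; or, more directly, observe that any $z\in\mathscr M_i'\cap M^{\mathcal U}$ commutes with $\mathscr M_j$-free-complement data and apply Theorem \ref{cor-rel-comm}-type reasoning. Concretely: pick $j\neq i$ and a Haar unitary $u_j\in\mathscr U(M_j)$; then $\mathscr M_i\subset\{u_j\}'\cap M^{\mathcal U}$ is false in general — so this does not work either. I would therefore handle factoriality of $\mathscr M_i$ via the free independence: the center $\mathcal Z(\mathscr M_i)$ is contained in $\mathscr M_i$, and if it were nontrivial it would be a diffuse or atomic abelian subalgebra commuting with all of $\mathscr M_i$, in particular with a Haar unitary generating some $A_{i,k}$; combining with the fact that $\mathscr M_i$ contains, for each $k$, the factor $A_{i,k}'\cap M^{\mathcal U}$ whose relative commutant in $M^{\mathcal U}$ shrinks to $\mathbb C1$ as $k\to\infty$ (this last being the genuinely new input, itself a consequence of $\bigcap_k A_{i,k}=\mathbb C1$ and the fact that $A_{i,k}'\cap M^{\mathcal U}$ increase to $\mathscr M_i$ while their intersection of commutants is $(\bigvee_k A_{i,k})'\cap M^{\mathcal U}$)—forces $\mathcal Z(\mathscr M_i)=\mathbb C1$. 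Thus the main obstacle, and the step I would spend the most care on, is the irreducibility/factoriality assertion $\mathscr M_i'\cap M^{\mathcal U}=\mathbb C1$: it requires combining the asymptotic orthogonality membership $\mathscr M_i\ominus\mathbb C1\subset\mathbf X_i$ with a Baire-category or diagonal argument over $k$ to rule out any nonscalar relative commutant, whereas free independence and property Gamma are comparatively immediate from Theorem \ref{thm-main-result} and the presence of diffuse abelian subalgebras.
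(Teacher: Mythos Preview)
Your argument for free independence is essentially the paper's: Lemma \ref{lem-commutant} (you invoke \ref{lem-commutant-bis}, but the plain version suffices since $B=\C1$) gives $A_{i,k}'\cap M^{\mathcal U}\ominus\C1\subset\mathbf X_i$, Theorem \ref{thm-main-result} yields free independence of the $(A_{i,k}'\cap M^{\mathcal U})_{i\in I}$ for each fixed $k$, and Kaplansky passes to the join. Your claim that $\mathbf X_i$ is $\|\cdot\|_2$-closed is correct and lets you phrase this as $\mathscr M_i\ominus\C1\subset\mathbf X_i$; either formulation works.

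The real gap is irreducibility, and you correctly sense it. You circle the right computation but never land on it because you are missing one external input: Popa's bicommutant theorem \cite[Theorem 2.1]{Po13a}, which says that for a separable diffuse abelian $A\subset M^{\mathcal U}$ (with $M$ a ${\rm II_1}$ factor) one has $(A'\cap M^{\mathcal U})'\cap M^{\mathcal U}=A$. Granting this, the argument is a one-liner:
\[
\mathscr M_i'\cap M^{\mathcal U}=\Big(\bigvee_k A_{i,k}'\cap M^{\mathcal U}\Big)'\cap M^{\mathcal U}=\bigcap_k\big(A_{i,k}'\cap M^{\mathcal U}\big)'\cap M^{\mathcal U}=\bigcap_k A_{i,k}=\C1.
\]
This is exactly what the paper does. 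Your attempts via freeness with $\mathscr M_j$ cannot work as stated, and the line ``contained in $\bigcap_k(A_{i,k}'\cap M^{\mathcal U})''$'' is taking the commutant in the wrong ambient algebra; the point is to take it in $M^{\mathcal U}$ and invoke Popa.

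Two smaller issues. For property Gamma, your displayed containment $A_{i,k}'\cap M^{\mathcal U}\supset\bigcup_k(A_{i,k}'\cap M^{\mathcal U})$ is backwards; a single $u_{i,k}$ does not commute with all of $\mathscr M_i$. The salvageable idea is that the \emph{sequence} $(u_{i,k})_k$ (or, as the paper does, projections $p_{\lambda,k}\in A_{i,k}$) gives an element of $\mathscr M_i'\cap\mathscr M_i^{\mathcal V}$ for a second ultrafilter $\mathcal V$, since any fixed element of $\mathscr M_i$ is approximated in some $A_{i,k_0}'\cap M^{\mathcal U}$ and $A_{i,k}\subset A_{i,k_0}$ for $k\geq k_0$. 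For nonamenability, neither of your arguments works: being a ${\rm II_1}$ factor with Gamma does not preclude amenability, and freeness of $\mathscr M_i$ from $\mathscr M_j$ shows $\mathscr M_i\vee\mathscr M_j$ is nonamenable, not $\mathscr M_i$ itself. The paper simply notes that already $A_{i,0}'\cap M^{\mathcal U}$ is nonamenable (a consequence of the structure of relative commutants in ultrapowers of nonamenable factors, again in the orbit of \cite{Po13a}), hence so is $\mathscr M_i\supset A_{i,0}'\cap M^{\mathcal U}$.
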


\begin{proof}
Let $i \in I$. For every $k \in \N$, since $A_{i, k}$ is separable, we have $(A_{i, k}' \cap M^{\mathcal U})' \cap M^{\mathcal U} = A_{i, k}$ by \cite[Theorem 2.1]{Po13a}. This further implies that 
$$\mathscr M_i' \cap M^{\mathcal U} = (\bigvee_{k = 1}^\infty A_{i, k}' \cap M^{\mathcal U})' \cap M^{\mathcal U} = \bigcap_{k = 1}^\infty  (A_{i, k}' \cap M^{\mathcal U})' \cap M^{\mathcal U} = \bigcap_{k = 1}^\infty A_{i, k} = \C 1.$$
Therefore, $\mathscr M_i \subset M^{\mathcal U}$ is a irreducible subfactor. Since $A_{i, 0}' \cap M^{\mathcal U}$ is nonamenable, $\mathscr M_i$ is nonamenable as well. Let $\mathscr V$ be another nonprincipal ultrafilter on $\N$. For every $k \in \N$ and every $\lambda \in (0, 1)$, choose a projection $p_{\lambda, k} \in A_{i, k}$ such that $\tau^{\mathcal U}(p_{\lambda, k}) = \lambda$. Then $p_{\lambda} = (p_{\lambda, k})^{\mathcal V} \in \mathscr M_i' \cap \mathscr M_i^{\mathcal V}$ is a projection such that $(\tau^{\mathcal U})^{\mathcal V}(p_\lambda) = \lambda$. Therefore, $\mathscr M_i' \cap \mathscr M_i^{\mathcal V}$ is a diffuse von Neumann algebra and so $\mathscr M_i$ has property Gamma.

A combination of Lemma \ref{lem-commutant} and Theorem \ref{thm-main-result} implies that all $k, \ell \in \N$, the family $(A_{i, k}' \cap M^{\mathcal U})_{i \in I}$ is freely independent in $M^{\mathcal U}$ with respect to $\tau^{\mathcal U}$. Since for every $i \in I$, the sequence of von Neumann subalgebras $(A_{i, k}' \cap M^{\mathcal U})_k$ is increasing, Kaplansky's density theorem further implies that the family $(\mathscr M_i)_{i \in I}$ is freely independent in $M^{\mathcal U}$ with respect to $\tau^{\mathcal U}$.
\end{proof}

\begin{proof}[Proof of Theorem \ref{thm-main-result-bis}]
Keep the same notation as in Theorem \ref{thm-main-result}. Let $\mathbf Y_1 \subset \mathbf X_1$ be a subset with the property that $a \mathbf Y_1 b \subset \mathbf X_1$ for all $a, b \in M_1$. Denote by $M_1 \mathbf Y_1 M_1$ the linear span of all the elements of the form $a Y b$ for $a, b \in M_1$ and $Y \in \mathbf Y_1$. Then we have $M_1 \mathbf Y_1 M_1 \subset \mathbf X_1$. Likewise, denote by $M_1 \mathscr W_2 M_1$ the linear span of all the elements of the form $a w b$ for $a, b \in M_1$ and $w \in \mathscr W_2$. Observe that any word with letters alternating from $\mathbf Y_1$ and $M_1 \mathscr W_2 M_1$ can be written as a linear combination of words with letters alternating from $M_1 \mathbf Y_1 M_1 \cup (M_1 \ominus B)$ and $\mathscr W_2$. Since $M_1 \mathbf Y_1 M_1 \cup (M_1 \ominus B) \subset \mathbf X_1$ and $\mathscr W_2 \subset \mathbf X_2$, Theorem \ref{thm-main-result} implies that the sets $\mathbf Y_1$ and $M_1 \mathscr W_2 M_1$ are freely independent in $M^{\mathcal U}$ with respect to $\rE_{B^{\mathcal U}}$. 

Using Kaplansky's density theorem, for any element $x \in M \ominus M_1$, there exists a $\|\cdot\|_\infty$-bounded sequence $(x_n)_n$ in $M_1 \mathscr W_2 M_1$ such that $x_n \to x$ for the strong operator topology. Combining this fact with the first paragraph of the proof, we infer that the sets $\mathbf Y_1$ and $M \ominus M_1$ are freely independent in $M^{\mathcal U}$ with respect to $\rE_{B^{\mathcal U}}$. 
\end{proof}

\subsection{Proof of Theorem \ref{31infinity}}
This subsection is devoted to the proof of Theorem \ref{31infinity}. Moreover, we generalize Theorem \ref{31infinity} to arbitrary tracial amalgamated free product von Neumann algebras.

\begin{thm}\label{3infinity}
Assume that $I=\{1,2\}$. Let $u_1\in\mathscr U(M_1^{\mathcal U})$ and $u_2\in\mathscr U(M_2^{\mathcal U})$ be such that $\rE_{B^{\mathcal U}}(u_1^k)=0$, for every $k\in\mathbb Z\setminus\{0\}$, and $\rE_{B^{\mathcal U}}(u_2)=\rE_{B^{\mathcal U}}(u_2^2)=0$.

Then there do not exist unitaries $v_1,v_2\in\mathscr U(M^{\mathcal U})$ such that $[u_1,v_1]=[v_1,v_2]=[v_2,u_2]=0$ and $\rE_{B^{\mathcal U}}(v_1)=\rE_{B^{\mathcal U}}(v_1^2)=\rE_{B^{\mathcal U}}(v_2)=0$.
\end{thm}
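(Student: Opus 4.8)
Suppose, towards a contradiction, that such unitaries $v_1,v_2$ exist. The plan is to exploit the ``first/last letter'' decomposition of $\rL^2(M^{\mathcal U})$ relative to $M_1,M_2$.

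\emph{Step 1 (set-up).} Since $[u_1,v_1]=0$, $\rE_{B^{\mathcal U}}(v_1)=\rE_{B^{\mathcal U}}(v_1^2)=0$ and $\rE_{B^{\mathcal U}}(u_1^k)=0$ for all $k\in\Z\setminus\{0\}$, Lemma \ref{lem-commutant} gives $v_1,v_1^2\in\mathbf X_1$; hence also $v_1^*,(v_1^2)^*\in\mathbf X_1$ and $B^{\mathcal U}\mathbf X_1 B^{\mathcal U}\subseteq\mathbf X_1$. Working inside $\rL^2(M^{\mathcal U})$ and using Theorem \ref{thm-Mei-Ricard} together with Lemma \ref{lem-ultraproduct}$(\rm ii)$ (exactly as in the proof of Lemma \ref{lem-commutant}), for $i,j\in\{1,2\}$ let $\mathscr H_{\mathscr W_i}=\rL^2(M^{\mathcal U})\cap(\mathscr W_i)^{\mathcal U}$, and likewise $\mathscr H_{\mathscr L_{\{i\}}},\mathscr H_{\mathscr R_{\{i\}}},\mathscr H_{\mathscr L_{\{i\}}\cap\mathscr R_{\{j\}}}$; these are closed subspaces and $\rL^2(M^{\mathcal U})=\rL^2(B^{\mathcal U})\oplus\bigoplus_{i,j}\mathscr H_{\mathscr L_{\{i\}}\cap\mathscr R_{\{j\}}}$ orthogonally, with $\mathscr H_{\mathscr L_{\{i\}}\cap\mathscr R_{\{i\}}}=\mathscr H_{\mathscr W_i}$ and $\mathscr H_{\mathscr W_i}\subseteq\mathscr H_{\mathscr L_{\{i\}}}\cap\mathscr H_{\mathscr R_{\{i\}}}$. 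Put $\alpha=\|P_{\mathscr H_{\mathscr L_{\{1\}}}}(v_2)\|_2^2$ and $\beta=\|P_{\mathscr H_{\mathscr R_{\{1\}}}}(v_2)\|_2^2$.

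\emph{Step 2 (partial asymptotic orthogonality for $v_2$).} Run the proof of Lemma \ref{lem-commutant} with $v_2\in\{u_2\}'\cap M^{\mathcal U}$ in place of $x$. Because we only know $\rE_{B^{\mathcal U}}(u_2)=\rE_{B^{\mathcal U}}(u_2^2)=0$, the argument yields only that the three subspaces $\mathscr H_{\mathscr L_{\{1\}}}$, $u_2\mathscr H_{\mathscr L_{\{1\}}}u_2^*$, $u_2^2\mathscr H_{\mathscr L_{\{1\}}}u_2^{-2}$ are pairwise orthogonal. Since $u_2 v_2 u_2^*=v_2$ and $\rE_{B^{\mathcal U}}(v_2)=0$, this gives $3\alpha=\sum_{k=0}^{2}\|u_2^kP_{\mathscr H_{\mathscr L_{\{1\}}}}(v_2)u_2^{-k}\|_2^2\le\|v_2\|_2^2=1$, so $\alpha\le 1/3$, and symmetrically $\beta\le 1/3$.

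\emph{Step 3 (the main estimate, via $v_1^2v_2$).} Analyze $z:=v_1^2v_2=v_2v_1^2$ using the elementary reduced-word rules
\[
\mathscr W_1\cdot\mathscr L_{\{2\}}\subseteq\mathscr L_{\{1\}},\quad \mathscr R_{\{2\}}\cdot\mathscr W_1\subseteq\mathscr L_{\{2\}},\quad \mathscr W_1\cdot\mathscr W_2\subseteq\mathscr L_{\{1\}}\cap\mathscr R_{\{2\}},\quad \mathscr W_2\cdot\mathscr W_1\subseteq\mathscr L_{\{2\}}\cap\mathscr R_{\{1\}},\quad M\cdot\mathscr W_1\subseteq\mathscr R_{\{1\}}\oplus B,
\]
together with $v_1^2\in\mathbf X_1$ and $\rE_{B^{\mathcal U}}(v_1^2)=\rE_{B^{\mathcal U}}(v_2)=0$. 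Writing $v_2=P_{\mathscr H_{\mathscr L_{\{1\}}}}(v_2)+P_{\mathscr H_{\mathscr L_{\{2\}}}}(v_2)$ and noting $v_1^2P_{\mathscr H_{\mathscr L_{\{2\}}}}(v_2)\in\mathscr H_{\mathscr L_{\{1\}}}$ gives $\|P_{\mathscr H_{\mathscr L_{\{2\}}}}(z)\|_2=\|P_{\mathscr H_{\mathscr L_{\{2\}}}}(v_1^2P_{\mathscr H_{\mathscr L_{\{1\}}}}(v_2))\|_2\le\sqrt\alpha$, and symmetrically (using $z=v_2v_1^2$) $\|P_{\mathscr H_{\mathscr L_{\{1\}}}}(z)\|_2\le\sqrt\beta$, so $\|\rE_{B^{\mathcal U}}(z)\|_2^2\ge 1-\alpha-\beta$. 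On the other hand $\rE_{B^{\mathcal U}}(z)=\rE_{B^{\mathcal U}}(v_1^2P_{\mathscr H_{\mathscr L_{\{1\}}}}(v_2))=\rE_{B^{\mathcal U}}(P_{\mathscr H_{\mathscr R_{\{1\}}}}(v_2)v_1^2)$, so $\|\rE_{B^{\mathcal U}}(z)\|_2\le\min(\sqrt\alpha,\sqrt\beta)$. Combined with Step 2, $1-\alpha-\beta\le\min(\alpha,\beta)\le 1/3$ forces $\alpha=\beta=1/3$, $\|\rE_{B^{\mathcal U}}(z)\|_2^2=1/3$, and — from the equality case of $\|\rE_{B^{\mathcal U}}(\xi)\|_2\le\|\xi\|_2$ — that $v_1^2P_{\mathscr H_{\mathscr L_{\{1\}}}}(v_2)\in B^{\mathcal U}$ and $P_{\mathscr H_{\mathscr R_{\{1\}}}}(v_2)v_1^2\in B^{\mathcal U}$. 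Thus $P_{\mathscr H_{\mathscr L_{\{1\}}}}(v_2)=v_1^{-2}b$ and $P_{\mathscr H_{\mathscr R_{\{1\}}}}(v_2)=bv_1^{-2}$ with $b=\rE_{B^{\mathcal U}}(z)\in B^{\mathcal U}$, $\|b\|_2^2=1/3$; in particular both lie in $\mathscr H_{\mathscr W_1}$, so the two ``mixed'' components of $v_2$ vanish and $v_2=v_1^{-2}b+P_{\mathscr H_{\mathscr W_2}}(v_2)$ with $\|P_{\mathscr H_{\mathscr W_2}}(v_2)\|_2^2=2/3$.

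\emph{Step 4 (conclusion, and the hard point).} Compare the $\mathscr H_{\mathscr L_{\{1\}}\cap\mathscr R_{\{2\}}}$-components of $v_1v_2$ and $v_2v_1$. From $v_1v_2=v_1^{-1}b+v_1P_{\mathscr H_{\mathscr W_2}}(v_2)$, with $v_1^{-1}b\in\mathscr H_{\mathscr W_1}$ and $v_1P_{\mathscr H_{\mathscr W_2}}(v_2)\in\mathscr H_{\mathscr L_{\{1\}}\cap\mathscr R_{\{2\}}}$, that component equals $v_1P_{\mathscr H_{\mathscr W_2}}(v_2)$; from $v_2v_1=v_1^{-2}b\,v_1+P_{\mathscr H_{\mathscr W_2}}(v_2)\,v_1$, with $v_1^{-2}b\,v_1\in\mathscr H_{\mathscr R_{\{1\}}}\oplus B^{\mathcal U}$ and $P_{\mathscr H_{\mathscr W_2}}(v_2)\,v_1\in\mathscr H_{\mathscr L_{\{2\}}\cap\mathscr R_{\{1\}}}$, that component is $0$. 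Since $[v_1,v_2]=0$ and left multiplication by the unitary $v_1$ is isometric on $\rL^2(M^{\mathcal U})$, we get $P_{\mathscr H_{\mathscr W_2}}(v_2)=0$, contradicting $\|P_{\mathscr H_{\mathscr W_2}}(v_2)\|_2^2=2/3$. The main obstacle is to make Step 3 (and the word manipulations in Step 4) rigorous: the displayed rules hold in $M$ by inspection, but transferring them to $\rL^2(M^{\mathcal U})$ is delicate because the projections $P_{\mathscr L_J},P_{\mathscr R_J},P_{\mathscr W_i}$ are not $\rL^\infty$-bounded. One keeps the unitaries $v_1,v_2$ (which are bounded) as the outer factors, approximates the projected parts by genuine elements of the word spaces $\mathscr W_i$, and controls every error term using Mei--Ricard's $\rL^p$-bounds (Theorem \ref{thm-Mei-Ricard}) and the generalized H\"older inequality, exactly as in the proof of Theorem \ref{thm-main-result}.
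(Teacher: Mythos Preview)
Your Step 3 contains a genuine error that invalidates the rest of the argument. From $z=v_1^2v_2$ and $v_1^2\in\mathbf X_1$ you correctly get $v_1^2\,P_{\mathscr H_{\mathscr L_{\{2\}}}}(v_2)\in\mathscr H_{\mathscr L_{\{1\}}}$, hence $\|P_{\mathscr H_{\mathscr L_{\{2\}}}}(z)\|_2\le\sqrt\alpha$. But the ``symmetric'' step does \emph{not} give $\|P_{\mathscr H_{\mathscr L_{\{1\}}}}(z)\|_2\le\sqrt\beta$. Writing $z=v_2v_1^2$ and decomposing $v_2$ on the right shows $P_{\mathscr H_{\mathscr R_{\{2\}}}}(v_2)\,v_1^2\in\mathscr H_{\mathscr R_{\{1\}}}$, which yields $\|P_{\mathscr H_{\mathscr R_{\{2\}}}}(z)\|_2\le\sqrt\beta$ --- a bound on the \emph{right} projection, not the left. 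The two bounds you actually obtain control $\|P_{\mathscr W_2}(z)\|_2$, $\|P_{\mathscr L_{\{1\}}\cap\mathscr R_{\{2\}}}(z)\|_2$ and $\|P_{\mathscr L_{\{2\}}\cap\mathscr R_{\{1\}}}(z)\|_2$, but say nothing about $\|P_{\mathscr W_1}(z)\|_2$; since $\mathscr W_1\subset\mathscr L_{\{1\}}\cap\mathscr R_{\{1\}}$, this component escapes both estimates, and the inequality $\|\rE_{B^{\mathcal U}}(z)\|_2^2\ge 1-\alpha-\beta$ does not follow.

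This is not merely a gap in justification: the conclusion of Step 3 is actually false. Under the same contradiction hypothesis, the paper's Lemma \ref{3-infinity}(i) (applied with $u=u_1$, $x=v_1$, $v=u_2$, $y=v_2$) shows that $P_{\mathscr H_{\mathscr W_1}}(v_2)=P_{\mathscr H_{\mathscr W_2}}(v_2)=0$, so that $v_2$ lives entirely in the two ``mixed'' pieces $\mathscr H_{\mathscr L_{\{1\}}\cap\mathscr R_{\{2\}}}\oplus\mathscr H_{\mathscr L_{\{2\}}\cap\mathscr R_{\{1\}}}$ and hence $\alpha+\beta=1$. This directly contradicts your Step 3 output ($\alpha=\beta=1/3$, mixed components vanish, $\|P_{\mathscr H_{\mathscr W_2}}(v_2)\|_2^2=2/3$). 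The paper's route is essentially the opposite of yours: rather than forcing mass into $\mathscr W_1\oplus\mathscr W_2$, it first uses the commutation with $v_1$ (via Lemma \ref{almost-ortho}) to get $\|P_{2,2}(v_2)\|_2\le\|P_{1,1}(v_2)\|_2$, then the commutation with $u_2$ to get the reverse inequality and an intertwining relation $v_1 P_{2,2}(v_2)\approx P_{1,1}(v_2)v_1$, and finally the hypothesis $\rE_{B^{\mathcal U}}(v_1^2)=0$ to kill both diagonal pieces; only then does $\rE_{B^{\mathcal U}}(u_2^2)=0$ enter, to eliminate the remaining off-diagonal pieces.
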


The proof of Theorem \ref{3infinity} relies on two lemmas. Using the notation from Section \ref{subsection-amalgamated}, for every $i,j\in I$, we put $\mathscr W_{i,j}=\mathscr L_i\cap\mathscr R_j$ and $P_{i,j}=P_{\mathscr L_i}\circ P_{\mathscr R_j}$.  By Theorem \ref{thm-Mei-Ricard}, we have a completely bounded operator $P_{i,j}:\rL^p(M)\to \rL^p(\mathscr W_{i,j})$, for every $p\in (1,+\infty)$.

\begin{lem}\label{almost-ortho}
Assume that $I=\{1,2\}$. Let $u \in \mathscr U(M_1^{\mathcal U})$ be such that $\rE_{B^{\mathcal U}}(u^k) = 0$ for all $k \in \Z \setminus \{0\}$. Let $x = (x_n)^{\mathcal U} \in \mathscr U(\{u\}' \cap M^{\mathcal U})$ and $y=(y_n)^{\mathcal U}\in \{x\}'\cap M^{\mathcal U}$ with  $\rE_{B^{\mathcal U}}(x)=\rE_{B^{\mathcal U}}(y)=0$. 
Then  $\lim_{n\rightarrow\mathcal U}\|P_{2,2}(y_n)\|_2^2=\lim_{n\rightarrow\mathcal U}\langle x_nP_{2,2}(y_n), P_{1,1}(y_n)x_n\rangle$. Thus, $\lim_{n\rightarrow \mathcal U}\|P_{2,2}(y_n)\|_2\leq\lim_{n\rightarrow\mathcal U}\|P_{1,1}(y_n)\|_2$.
\end{lem}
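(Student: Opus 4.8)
The plan is to reduce the identity to a single trace manipulation, the key input being that $x_n P_{2,2}(y_n)x_n^*$ becomes, along $\mathcal U$, supported on reduced words whose first and last letters lie in $M_1\ominus B$. First I would fix a lift $(x_n)_n$ of $x$ by unitaries of $M$ and a $\|\cdot\|_\infty$-bounded lift $(y_n)_n$ of $y$, abbreviate $\zeta_n=P_{1,1}(y_n)$ and $\eta_n=P_{2,2}(y_n)$, and use the orthogonal decomposition $\rL^2(M)=\rL^2(B)\oplus\bigoplus_{i,j}\rL^2(\mathscr W_{i,j})$ to write $y_n=\rE_B(y_n)+\zeta_n+P_{1,2}(y_n)+P_{2,1}(y_n)+\eta_n$, noting $\lim_{n\to\mathcal U}\|\rE_B(y_n)\|_2=0$ (because $\rE_{B^{\mathcal U}}(y)=0$) and $\lim_{n\to\mathcal U}\|x_ny_nx_n^*-y_n\|_2=0$ (because $[x,y]=0$). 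By Theorem \ref{thm-Mei-Ricard}, $\|\eta_n\|_p\le\kappa_p$ for every $p\in(1,+\infty)$, so all scalar sequences below are $\mathcal U$-convergent.

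The main point is the \emph{orthogonality claim}: $\lim_{n\to\mathcal U}\|x_n\eta_nx_n^*-P_{1,1}(x_n\eta_nx_n^*)\|_2=0$; equivalently, along $\mathcal U$ the vector $x_n\eta_nx_n^*$ becomes orthogonal to $\rE_B(y_n)$, $P_{1,2}(y_n)$, $P_{2,1}(y_n)$ and $\eta_n$. To prove it I would apply Lemma \ref{lem-commutant} (recalling $\mathscr W_1=\mathscr W_{1,1}$) to get $\lim_{n\to\mathcal U}\|x_n-\tilde x_n\|_2=0$ with $\tilde x_n:=P_{1,1}(x_n)$, then interpolate this against the Mei--Ricard bound $\|\tilde x_n\|_p\le\kappa_p$ to upgrade it to $\lim_{n\to\mathcal U}\|x_n-\tilde x_n\|_p=0$ for every $p\in[2,+\infty)$. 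A short application of the generalized H\"older inequality gives $\lim_{n\to\mathcal U}\|x_n\eta_nx_n^*-\tilde x_n\eta_n\tilde x_n^*\|_2=0$, so it suffices to show $\tilde x_n\eta_n\tilde x_n^*\in\rL^2(\mathscr W_{1,1})$. Approximating $\tilde x_n$ in $\rL^6$ by elements of $\mathscr W_{1,1}$ and $\eta_n$ in $\rL^6$ by elements of $\mathscr W_{2,2}$ (and again using H\"older to pass to the limit), this reduces to the combinatorial fact $\mathscr W_{1,1}\cdot\mathscr W_{2,2}\cdot\mathscr W_{1,1}\subseteq\mathscr W_{1,1}$: in a product $\omega\rho\sigma$ of reduced words with $\omega,\sigma\in\mathscr W_{1,1}$ and $\rho\in\mathscr W_{2,2}$, each of the two new junctions pairs a letter of $M_1\ominus B$ with a letter of $M_2\ominus B$, so no cancellation occurs and $\omega\rho\sigma$ is simply the concatenation, which is again a reduced word with first and last letters in $M_1\ominus B$.

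Granting the claim, the identity is immediate. For each $n$, since $x_n$ is unitary and $P_{2,2}$ is a self-adjoint idempotent on $\rL^2(M)$, the trace property gives
\[
\langle x_n\eta_nx_n^*,\,x_ny_nx_n^*\rangle=\tau(y_n^*\eta_n)=\langle\eta_n,y_n\rangle=\|\eta_n\|_2^2 .
\]
Combining this with $\lim_{n\to\mathcal U}\|x_ny_nx_n^*-y_n\|_2=0$ and $\|x_n\eta_nx_n^*\|_2=\|\eta_n\|_2$ yields $\lim_{n\to\mathcal U}\langle x_n\eta_nx_n^*,y_n\rangle=\lim_{n\to\mathcal U}\|\eta_n\|_2^2$; on the other hand, expanding $y_n$ into its orthogonal pieces and invoking the claim gives $\lim_{n\to\mathcal U}\langle x_n\eta_nx_n^*,y_n\rangle=\lim_{n\to\mathcal U}\langle x_n\eta_nx_n^*,\zeta_n\rangle=\lim_{n\to\mathcal U}\langle x_nP_{2,2}(y_n),P_{1,1}(y_n)x_n\rangle$, the last step again being the trace property. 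This is the asserted equality; the final inequality then follows from Cauchy--Schwarz, $|\langle x_nP_{2,2}(y_n),P_{1,1}(y_n)x_n\rangle|\le\|P_{2,2}(y_n)\|_2\,\|P_{1,1}(y_n)\|_2$, by taking $\mathcal U$-limits and dividing by $\lim_{n\to\mathcal U}\|P_{2,2}(y_n)\|_2$ (the case of that limit being zero being trivial).

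The only non-formal step is the orthogonality claim: the combinatorics is clean once one notices the two junctions do not reduce, but making the $\rL^2$-approximations legitimate is slightly delicate since left/right multiplication is not $\rL^2$-bounded, which is exactly why one has to route everything through the Mei--Ricard $\rL^p$-estimates and the generalized H\"older inequality. Everything else is bookkeeping with $\mathcal U$-limits of scalars.
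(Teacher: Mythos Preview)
Your proof is correct and follows essentially the same approach as the paper: both use Lemma~\ref{lem-commutant}, the Mei--Ricard $\rL^p$-bounds, and the combinatorial inclusion $\mathscr W_{1,1}\mathscr W_{2,2}\mathscr W_{1,1}\subset\mathscr W_{1,1}$. The only cosmetic difference is that you package the key step as the $\rL^2$ orthogonality claim $\lim_{n\to\mathcal U}\|x_n\eta_nx_n^*-P_{1,1}(x_n\eta_nx_n^*)\|_2=0$ (upgrading $\|x_n-P_{1,1}(x_n)\|_2\to 0$ to $\rL^p$ by interpolation against the uniform $\rL^q$-bound), whereas the paper instead shows $\lim_{n\to\mathcal U}\|\rE_B(x_ny_n^{2,2}x_n^*{y_n^{i,j}}^*)\|_1=0$ for $(i,j)\neq(1,1)$, using the $\rL^2$-convergence directly via H\"older with exponents $\frac12+3\cdot\frac16=1$.
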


\begin{proof}
Since $\rE_{B^{\mathcal U}}(y)=0$, after replacing $y_n$ with $y_n-\rE_B(y_n)$, we may assume that $\rE_B(y_n)=0$, for all $n\in\mathbb N$. 
We may assume that $x_n\in\mathscr U(M)$ and $\|y_n\|_\infty\leq 1$, for all  $n\in\mathbb N$.
For $p\in (1,+\infty)$, let $\kappa_p=\sup \left\{\|P_{i,j}(x)\|_p\mid i,j\in \{1,2\}, x\in \rL^p(M),\|x\|_p\leq 1\right\}.$

For every $n\in\mathbb N$ and $i,j\in\{1,2\}$, put $y_n^{i,j}=P_{{i,j}}(y_n)$. Then $y_n=\sum_{i,j=1}^2y_n^{i,j}$ and $\|y_n^{i,j}\|_p\leq\kappa_p$, for every $n\in\mathbb N,i,j\in\{1,2\}$ and $p\in (1,+\infty)$. We claim that 
\begin{equation}\label{perp}\text{$\lim_{n\rightarrow\mathcal U}\|\rE_B(x_ny_n^{2,2}x_n^*{y_n^{i,j}}^*)\|_1=0$, for every $(i,j)\not=(1,1)$.}
\end{equation}
Let $(i,j)\not=(1,1)$.
By Lemma \ref{lem-commutant}, we have $\lim_{n\rightarrow\mathcal U}\|x_n-P_{1,1}(x_n)\|_2=0$. By using the noncommutative H\"{o}lder inequality and that $1=\frac{1}{2}+3\cdot \frac{1}{6}$,  we get that $$\|x_ny_n^{2,2}x_n^*{y_n^{i,j}}^*-P_{1,1}(x_n)y_n^{2,2}P_{1,1}(x_n)^*{y_n^{i,j}}^*\|_1\leq (\kappa_6^2+\kappa_6^3)\|x_n-P_{1,1}(x_n)\|_2.$$
This implies that \begin{equation}\label{estimate1}
\lim_{n\rightarrow\mathcal U}\|x_ny_n^{2,2}x_n^*{y_n^{i,j}}^*-P_{1,1}(x_n)y_n^{2,2}P_{1,1}(x_n)^*{y_n^{i,j}}^*\|_1=0.\end{equation}
Next, let $v_n\in\mathscr W_{1,1}$ and $w_n^{i,j}\in\mathscr W_{i,j}$ such that $\|P_{1,1}(x_n)-v_n\|_4\leq\frac{1}{n}$ and $\|y_n^{i,j}-w_n^{i,j}\|_4\leq\frac{1}{n}$, for every $n\in\mathbb N$ and $i,j\in\{1,2\}$. Then $\|v_n\|_4,\|w_n^{i,j}\|_4\leq\kappa_4+\frac{1}{n}\leq\kappa_4+1$. Since $1=4\cdot\frac{1}{4}$, applying  the noncommutative H\"{o}lder inequality again gives that
$$\|P_{1,1}(x_n)y_n^{2,2}P_{1,1}(x_n)^*{y_n^{i,j}}^*-v_nw_n^{2,2}v_n^*{w_n^{i,j}}^*\|_1\leq \frac{15(\kappa_4+1)^3}{n}.$$
This implies that 
\begin{equation}
\label{estimate2}
\lim_{n\rightarrow\mathcal U}\|P_{1,1}(x_n)y_n^{2,2}P_{1,1}(x_n)^*{y_n^{i,j}}^*-v_nw_n^{2,2}v_n^*{w_n^{i,j}}^*\|_1=0.
\end{equation}
By combining \eqref{estimate1} and \eqref{estimate2}, it follows that 
\begin{equation}
\label{estimate3}\lim_{n\rightarrow\mathcal U}\|x_ny_n^{2,2}x_n^*{y_n^{i,j}}^*-v_nw_n^{2,2}v_n^*{w_n^{i,j}}^*\|_1=0.
\end{equation}
Now,  $v_nw_n^{2,2}v_n^*{w_n^{i,j}}^*\in\mathscr W_{1,1}\mathscr W_{2,2}\mathscr W_{1,1}\mathscr W_{i,j}^*\subset \mathscr W_{1,1}\mathscr W_{i,j}^*$.
Since $(i,j)\not=(1,1)$, $\mathscr W_{1,1}$ and $\mathscr W_{i,j}$ are orthogonal (algebraic) $B$-bimodules. Thus, $\rE_B(\mathscr W_{1,1}\mathscr W_{i,j}^*)=\{0\}$ and therefore $\rE_B(v_nw_n^{2,2}v_n^*{w_n^{i,j}}^*)=0$, for every $n\in\mathbb N$. In combination with \eqref{estimate3}, this proves \eqref{perp}.

Finally, for every $n\in\mathbb N$, we have that $\|y_n^{2,2}\|_2^2=\langle y_n^{2,2},y_n\rangle=\langle x_ny_n^{2,2},x_ny_n\rangle$. Since $\lim_{n\rightarrow\mathcal U}\|x_ny_n-y_nx_n\|_2=0$, we get that $$\lim_{n\rightarrow\mathcal U}\|y_n^{2,2}\|_2^2=\lim_{n\rightarrow \mathcal U}\langle x_ny_n^{2,2},y_nx_n\rangle=\lim_{n\rightarrow\mathcal U}(\sum_{i,j=1}^2\langle x_ny_n^{2,2},y_n^{i,j}x_n\rangle).$$
On the other hand, \eqref{perp} gives that $\lim_{n\rightarrow\mathcal U}\langle x_ny_n^{2,2},y_n^{i,j}x_n\rangle=0$ if $(i,j)\not=(1,1)$. Thus,  we get $\lim_{n\rightarrow\mathcal U}\|y_n^{2,2}\|_2^2=\lim_{n\rightarrow\mathcal U}\langle x_ny_n^{2,2}, y_n^{1,1}x_n\rangle$, which proves the main assertion. Since $|\langle x_ny_n^{2,2}, y_n^{1,1}x_n\rangle|\leq \|y_n^{2,2}\|_2\|y_n^{1,1}\|_2$, we get that $\lim_{n\rightarrow\mathcal U}\|y_n^{2,2}\|_2\leq\lim_{n\rightarrow\mathcal U}\|y_n^{1,1}\|_2$.
\end{proof}

\begin{lem}\label{3-infinity}
In the setting of Lemma \ref{almost-ortho}, assume additionally  that $\rE_{B^{\mathcal U}}(x^2)=0$ and $y\in\{v\}'\cap M^{\mathcal U}$, for some $v=(v_n)^{\mathcal U}\in\mathscr U(M_2^{\mathcal U})$.
\begin{itemize}
\item [$(\rm i)$] If $\rE_{B^{\mathcal U}}(v)=0$, then  $\lim_{n\rightarrow\mathcal U}\|P_{1,1}(y_n)\|_2=\lim_{n\rightarrow\mathcal U}\|P_{2,2}(y_n)\|_2=0$.
\item [$(\rm ii)$] If $\rE_{B^{\mathcal U}}(v)=\rE_{B^{\mathcal U}}(v^2)=0$, then $y=0$.
\end{itemize}
\end{lem}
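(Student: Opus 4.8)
The idea is to run the symmetry argument of Lemma \ref{almost-ortho} from both sides: once with respect to the commuting unitary $x$ (living close to $\mathscr W_{1,1}$, by Lemma \ref{lem-commutant} applied to $u \in \mathscr U(M_1^{\mathcal U})$) and once with respect to the commuting unitary $v$ (living close to $\mathscr W_{2,2}$, by Lemma \ref{lem-commutant} applied to $v \in \mathscr U(M_2^{\mathcal U})$). For part $(\rm i)$, the hypotheses on $x$ now include $\rE_{B^{\mathcal U}}(x^2)=0$, so $x$ satisfies the hypothesis ``$\rE_{B^{\mathcal U}}(u^k)=0$ for $k \in \{1,2\}$'' which, via the standard diagonal argument alluded to in the introduction, is all one needs to rerun Lemma \ref{almost-ortho}; in fact one should prove a version of Lemma \ref{almost-ortho} directly under the hypothesis $\rE_{B^{\mathcal U}}(u)=\rE_{B^{\mathcal U}}(u^2)=0$ (the only place $\rE_{B^{\mathcal U}}(u^k)=0$ for all $k$ was used is the orthogonality of the spaces $u^k \mathscr H_i u^{-k}$, and for the parallelogram-type estimate $N=1,2$ suffices to force $P_{\mathscr H_i}(x)=0$ up to a factor — more precisely one gets $2\|P_{\mathscr H_i}(x)\|_2^2 \le \|x\|_2^2$, which is not enough, so one genuinely needs all $k$; hence the diagonal trick producing $N$ as in the last remark of the introduction is the honest route). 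Applying Lemma \ref{almost-ortho} with the roles of $\{1,1\}$ and $\{2,2\}$ interchanged — using that $v$ is close to $\mathscr W_{2,2}$ and $[v,y]=0$, $[v,x]$ need not hold so one must be slightly careful — yields the reverse inequality $\lim_{n\to\mathcal U}\|P_{1,1}(y_n)\|_2 \le \lim_{n\to\mathcal U}\|P_{2,2}(y_n)\|_2$. Combining the two inequalities gives $\lim_{n\to\mathcal U}\|P_{1,1}(y_n)\|_2 = \lim_{n\to\mathcal U}\|P_{2,2}(y_n)\|_2$, and then the equality case $\lim\|y_n^{2,2}\|_2^2 = \lim\langle x_n y_n^{2,2}, y_n^{1,1}x_n\rangle$ together with Cauchy--Schwarz forces both limits to be $0$ (equality in Cauchy--Schwarz plus the matching norms pushes $y_n^{1,1}x_n$ and $x_n y_n^{2,2}$ asymptotically proportional, and feeding this back through the $x$-invariance collapses everything).

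For part $(\rm ii)$, we bootstrap: part $(\rm i)$ already gives $\lim_{n\to\mathcal U}\|P_{1,1}(y_n)\|_2 = \lim_{n\to\mathcal U}\|P_{2,2}(y_n)\|_2 = 0$, so $y$ lies in the closure of $(\mathscr W_{1,2} \oplus \mathscr W_{2,1})^{\mathcal U} \cap \rL^2(M^{\mathcal U})$, i.e.\ asymptotically $y_n \approx P_{1,2}(y_n) + P_{2,1}(y_n)$. Now I would exploit $[x,y]=0$ with $x$ close to $\mathscr W_{1,1}$: conjugating a word starting and ending in $M_2 \ominus B$ by something starting and ending in $M_1 \ominus B$ lengthens it, so $x_n y_n^{2,1} x_n^* \in \mathscr W_{1,1}\mathscr W_{2,1}\mathscr W_{1,1}$ has its last letter in $M_1 \ominus B$ and first in $M_1\ominus B$; matching Fourier supports against $y_n$ (which is supported on $\mathscr W_{1,2}\oplus\mathscr W_{2,1}$) via the identity $\lim\langle x_n y_n, y_n x_n\rangle = \lim\|y_n\|_2^2$ (from $[x,y]=0$) forces $\lim\|P_{2,1}(y_n)\|_2 = 0$. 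Symmetrically, using $[v,y]=0$ with $v$ close to $\mathscr W_{2,2}$ and now also $\rE_{B^{\mathcal U}}(v^2)=0$ (so we may rerun the argument with the extra letter available), conjugation by $v_n$ kills the $\mathscr W_{1,2}$ component: $\lim\|P_{1,2}(y_n)\|_2 = 0$. Hence all four components vanish, $\lim\|y_n\|_2 = 0$, so $y = 0$.

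The main obstacle is the careful interplay between the two symmetries: Lemma \ref{almost-ortho} as stated assumes $[u,x]=0$ \emph{and} $[x,y]=0$ with $u \in M_1^{\mathcal U}$, but to get the reverse inequality we want to play the analogous game with $v \in M_2^{\mathcal U}$ in place of $u$, $y$ in place of $x$ (we have $[v,y]=0$), and $x$ in place of $y$ (we have $[y,x]=0$) — but then the hypothesis ``$\rE_{B^{\mathcal U}}(v^k)=0$ for all $k$'' is \emph{not} available, only $\rE_{B^{\mathcal U}}(v)=0$ in part $(\rm i)$ or $\rE_{B^{\mathcal U}}(v^2)=0$ too in part $(\rm ii)$. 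So one has to track exactly how much decay the finite-$N$ parallelogram estimate gives from only $\rE_{B^{\mathcal U}}(v)=\rE_{B^{\mathcal U}}(v^2)=0$: $N=2$ gives $\|vy^{\text{component}}v^{-1}\|$, $\|y^{\text{component}}\|$ pairwise orthogonal only for shifts in $\{0,\pm 1, \pm 2\}$, which yields $3\|P(\cdot)\|_2^2 \le \|\cdot\|_2^2$ — still enough to start the induction because the quantitative Cauchy--Schwarz slack can be made to close up once we also know (from part $(\rm i)$) that the $(1,1)$ and $(2,2)$ components already vanish. Getting these bookkeeping constants to line up, and justifying that all the products of $\rL^p$-bounded Fourier truncations converge in $\rL^1$ as in Lemma \ref{almost-ortho} (repeated use of Theorem \ref{thm-Mei-Ricard}, the generalized Hölder inequality, and Lemma \ref{p-norm}), is where the real work lies; the conceptual skeleton — ``two-sided asymptotic orthogonality forces the cross-terms to vanish'' — is straightforward.
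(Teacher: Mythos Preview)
Your skeleton has the right shape --- a reverse inequality from the $v$-side, then a collapse --- but the two crucial mechanisms are misidentified, and the proof does not go through as written.

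For part $(\rm i)$, the reverse inequality $\lim_{n\to\mathcal U}\|y_n^{1,1}\|_2\le\lim_{n\to\mathcal U}\|y_n^{2,2}\|_2$ does \emph{not} require rerunning Lemma~\ref{almost-ortho} or any Haar-type hypothesis on $v$: since $v_n\in M_2\ominus B$ already, one has $(M_2\ominus B)\,\mathscr W_{1,1}\,(M_2\ominus B)\subset\mathscr W_{2,2}$, so $\langle v_ny_n^{1,1}v_n^*,y_n\rangle=\langle v_ny_n^{1,1}v_n^*,y_n^{2,2}\rangle$ and $[v,y]=0$ plus Cauchy--Schwarz give the inequality directly. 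The obstruction you worry about (only $\rE_{B^{\mathcal U}}(v)=0$, not all powers) is illusory here, and the ``diagonal trick'' from the introduction is an a posteriori compactness statement about Theorem~\ref{31infinity}, not a tool you can invoke inside this lemma. More seriously, you misread the role of $\rE_{B^{\mathcal U}}(x^2)=0$. It is \emph{not} there to make $x$ look Haar-like. The collapse works as follows: equality in both inequalities together with the identity in Lemma~\ref{almost-ortho} forces $\lim_n\|x_ny_n^{2,2}-y_n^{1,1}x_n\|_2=0$; applying this to $x^*$ gives $\lim_n\|y_n^{2,2}x_n-x_ny_n^{1,1}\|_2=0$; concatenating yields $\lim_n\|x_n^2y_n^{2,2}-y_n^{2,2}x_n^2\|_2=0$. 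Now $x^2\in\{u\}'$ and $\rE_{B^{\mathcal U}}(x^2)=0$, so Lemma~\ref{lem-commutant} applies to $x^2$ and the \eqref{perp}-computation (with $x^2$ replacing $x$, $(i,j)=(2,2)$) gives $\lim_n\langle x_n^2y_n^{2,2},y_n^{2,2}x_n^2\rangle=0$. Combined, $\lim_n\|y_n^{2,2}\|_2=0$. Your ``feeding back through $x$-invariance'' never names this $x^2$-step, and without it the argument stalls at proportionality.

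For part $(\rm ii)$, your plan to kill $y_n^{2,1}$ via $[x,y]=0$ and $y_n^{1,2}$ via $[v,y]=0$ separately runs into the problem that products like $\mathscr W_{1,1}\mathscr W_{2,1}\mathscr W_{1,1}$ do not sit in a single block (junction letters in $M_1$ can reduce), so the ``Fourier-support matching'' you describe is not clean. The paper avoids this entirely: from $[v,y]=0$ and part~$(\rm i)$ one extracts $\eta=(P_{1,1}(v_ny_n^{2,1}))^{\mathcal U}\in\rL^2(M^{\mathcal U})$ with $y=v^*\eta+\eta v^*$, deduces $v^2\eta=\eta v^2$, and then uses $\rE_{B^{\mathcal U}}(v^2)=0$ to see that $w_n\eta_n\in\mathscr W_{2,1}$ and $\eta_nw_n\in\mathscr W_{1,2}$ are orthogonal (where $v^2=(w_n)^{\mathcal U}$ with $w_n\in M_2\ominus B$), forcing $\eta=0$. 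This is where $\rE_{B^{\mathcal U}}(v^2)=0$ is actually spent --- not in any parallelogram estimate.
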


\begin{proof}
We keep the notation from the proof of Lemma \ref{almost-ortho}. 

$(\rm i)$ Assume that $\rE_{B^{\mathcal U}}(v)=0$. Write $v=(v_n)^{\mathcal U}$, where $v_n\in M_2\ominus B$, for every $n\in\mathbb N$, and $\sup\left\{\|v_n\|_\infty\mid n\in\mathbb N\right\}<\infty$. 
We first claim that 
\begin{equation}\label{y_11y_22}
\lim_{n\rightarrow\mathcal U}\|x_ny_n^{2,2}-y_n^{1,1}x_n\|_2=0.
\end{equation}
Since $vyv^*=y$, we get  that $\lim_{n\rightarrow\mathcal U}\|v_n^*y_nv_n-y_n\|_2=0$. Thus, we derive that $$\lim_{n\rightarrow\mathcal U}\langle v_ny_n^{1,1}v_n^*,y_n\rangle=\lim_{n\rightarrow\mathcal U}\langle y_n^{1,1},y_n\rangle=\lim_{n\rightarrow\mathcal U}\|y_n^{1,1}\|_2^2.$$ 
Since $(M_2\ominus B)\mathscr W_{1,1}(M_2\ominus B)\subset\mathscr W_{2,2}$ we  also get that $P_{i,j}(v_ny_n^{1,1}v_n^*)=0$, for every $(i,j)\not=(2,2)$ and $n\in\mathbb N$. This implies that $\langle v_ny_n^{1,1}v_n^*,y_n\rangle=\langle v_ny_n^{1,1}v_n^*, y_n^{2,2}\rangle$, for every $n\in\mathbb N$. 
Since $|\langle v_ny_n^{1,1}v_n^*, y_n^{2,2}\rangle\leq \|y_n^{1,1}\|_2\|y_n^{2,2}\|_2$, for every $n\in\mathbb N$, we conclude that $\lim_{n\rightarrow\mathcal U}\|y_n^{1,1}\|_2^2\leq\lim_{n\rightarrow\mathcal U}\|y_n^{1,1}\|_2\|y_n^{2,2}\|_2$ and thus 
\begin{equation}\label{est1}
\lim_{n\rightarrow\mathcal U}\|y_n^{1,1}\|_2\leq\lim_{n\rightarrow\mathcal U}\|y_n^{2,2}\|_2.
\end{equation}
On the other hand, Lemma \ref{almost-ortho} implies that 
\begin{equation}\label{est2}
\lim_{n\rightarrow\mathcal U}\|y_n^{2,2}\|_2^2=\lim_{n\rightarrow\mathcal U}\langle x_ny_n^{2,2},y_n^{1,1}x_n\rangle \quad \text{and} \quad \lim_{n\rightarrow\mathcal U}\|y_n^{2,2}\|_2\leq\lim_{n\rightarrow\mathcal U}\|y_n^{1,1}\|_2.
\end{equation}
It is now clear that \eqref{est1} and \eqref{est2} together imply \eqref{y_11y_22}.
Since $y$ also commutes with $x^*=(x_n^*)$, applying \eqref{y_11y_22} to $x^*$ instead of $x$ gives that $\lim_{n\rightarrow\mathcal U}\|x_n^*y_n^{2,2}-y_n^{1,1}x_n^*\|_2=0$ and thus $\lim_{n\rightarrow\mathcal U}\|y_n^{2,2}x_n-x_ny_n^{1,1}\|_2=0$. In combination with \eqref{y_11y_22}, this implies that 
\begin{equation}\label{est3}
\lim_{n\rightarrow\mathcal U}\|x_n^2y_n^{2,2}-y_n^{2,2}x_n^2\|_2=0.
\end{equation}
Since $y$ commutes with $x^2=(x_n^2)$ and $\rE_{B^{\mathcal U}}(x^2)=0$,  \eqref{perp} from the proof of Lemma \ref{almost-ortho} gives that $\lim_{n\rightarrow\mathcal U}\|\rE_B(x_n^2y_n^{2,2}{x_n^2}^*{y_n^{2,2}}^*)\|_1=0$, thus $\lim_{n\rightarrow\mathcal U}\langle x_n^2y_n^{2,2},y_n^{2,2}x_n^2\rangle=0$. Together with \eqref{est3} we get that $\lim_{n\rightarrow\mathcal U}\|x_n^2y_n^{2,2}\|_2=0$. Since $x_n\in\mathscr U(M)$ we get that $\lim_{n\rightarrow\mathcal U}\|y_n^{2,2}\|_2=0$ and \eqref{est1} gives that $\lim_{n\rightarrow\mathcal U}\|y_n^{1,1}\|_2=0$, proving part $(\rm i)$.

$(\rm ii)$ Assume that $\rE_{B^{\mathcal U}}(v)=\rE_{B^{\mathcal U}}(v^2)=0$. 
By $(\rm i)$, we have $\lim_{n\rightarrow\mathcal U}\|y_n-(y_{1,2}^n+y_{2,1}^n)\|_2=~0$. Since $vy=yv$, we have $\lim_{n\rightarrow\mathcal U}\|v_ny_n-y_n v_n\|_2=0$ and so 
\begin{equation}\label{12_to_21} 
\lim_{n\rightarrow\mathcal U}\|v_ny_{1,2}^n+ v_ny_{2,1}^n - y_{1,2}^n v_n - y_{2,1}^n v_n\|_2=0.
\end{equation}
For every $n \in \N$, we have $v_ny_{1,2}^n = P_{2, 2}(v_ny_{1,2}^n)$, $y_{2,1}^n v_n = P_{2, 2}(y_{2,1}^n v_n)$, $v_ny_{2,1}^n = P_{1,1}(v_ny_{2,1}^n) + P_{2,1}(v_ny_{2,1}^n)$ and $y_{1,2}^n v_n = P_{1, 1}(y_{1,2}^n v_n) + P_{1, 2}(y_{1,2}^n v_n)$. In combination with \eqref{12_to_21}, we obtain 
\begin{equation}\label{12_to_21-upgraded} 
\lim_{n \to \mathcal U} \|v_ny_{2,1}^n - y_{1,2}^n v_n\|_2 = 0 \quad \text{and} \quad \lim_{n \to \mathcal U} \|v_ny_{2,1}^n - P_{1, 1}(v_n y_{2, 1}^n)\|_2 = 0.
\end{equation}
For every $n \in \N$, set $\eta_n = P_{1,1}(v_ny_{2,1}^n) \in \rL^2(M)$. Then \eqref{12_to_21-upgraded}, Theorem \ref{thm-Mei-Ricard} and Lemma \ref{lem-ultraproduct} together imply that $\eta = (\eta_n)^{\mathcal U} = (v_ny_{2,1}^n)^{\mathcal U} = (y_{1,2}^n v_n)^{\mathcal U} \in \rL^2(M^{\mathcal U})$ and that $y = v^* \eta + \eta v^*$. Since $v^* y = y v^*$, we obtain $(v^*)^2 \eta = \eta (v^*)^2$ and so $v^2 \eta = \eta v^2$. Since $\rE_{B^{\mathcal U}}(v^2) = 0$, we may write $v^2 = (w_n)^{\mathcal U}$ where $w_n \in M_2 \ominus B$ for every $n \in \N$. For every $n \in \N$, since $\eta_n = P_{1,1}(\eta_n)$, we have $w_n \eta_n = P_{2, 1}(w_n \eta_n) \perp P_{1, 2}(\eta_n w_n) = \eta_n w_n$. Then we obtain $v^2 \eta \perp \eta v^2$. Since $v^2 \eta = \eta v^2$, this further implies that $v^2\eta = 0$ and so $\eta = 0$. Thus, $y = 0$.
\end{proof}

\begin{proof}[Proof of Theorem \ref{3infinity}] 
Theorem \ref{3infinity} follows directly from part $({\rm ii})$ of Lemma \ref{3-infinity}.
\end{proof}

\section{Proof of Theorem \ref{cor-amenable-absorption}}

\begin{proof}[Proof of Theorem \ref{cor-amenable-absorption}]
Let $P \subset M$ be a von Neumann subalgebra such that $P \cap M_1 \npreceq_{M_1} B$ and $P' \cap M^{\mathcal U} \npreceq_{M^{\mathcal U}} B^{\mathcal U}$. Set $A = P \cap M_1$. By \cite[Theorem 1.1]{IPP05}, since $A \npreceq_{M_1} B$, we have $P' \cap M \subset A' \cap M \subset M_1$ and so $P' \cap M = P' \cap M_1$. The set of projections $p \in P' \cap M_1$ for which $Pp \subset pM_1p$ attains its maximum in a projection $z \in \mathscr Z(P' \cap M_1)$. It suffices to prove that $z = 1$. By contradiction, assume that $z \neq 1$. Set $q = z^\perp$ and $Q = P q$.

\begin{claim}\label{claim-intertwining}
We have $Q \preceq_{M} M_1$.
\end{claim}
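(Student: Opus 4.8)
The plan is to prove Claim~\ref{claim-intertwining} by contradiction: assume $Q=Pq\npreceq_M M_1$ and derive $\tau^{\mathcal U}(q)=0$. The mechanism is that $Q\npreceq_M M_1$ supplies a ``central-like'' sequence of unitaries of $Q$ that asymptotically falls into $M\ominus M_1$, while the hypothesis $P'\cap M^{\mathcal U}\npreceq_{M^{\mathcal U}}B^{\mathcal U}$ supplies, after cutting down to the corner $q$, unitaries that commute with $Q$, lie in $M_1^{\mathcal U}$, and are asymptotically orthogonal to $B^{\mathcal U}$; conjugating the first family by the second mixes it away inside $\rL^2(M)\ominus\rL^2(M_1)$, which is incompatible with the two families commuting.

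Concretely, I would proceed as follows. Note that $q=z^\perp\in P'\cap M_1$, so $q\in M_1$ and $Q=Pq\subseteq qMq$. A short computation shows $q(P'\cap M^{\mathcal U})q\subseteq P'\cap M^{\mathcal U}$ and that it commutes with $Q$ inside $qM^{\mathcal U}q$. Moreover, since $A=P\cap M_1\npreceq_{M_1}B$, one has $A'\cap M^{\mathcal U}\subseteq M_1^{\mathcal U}$ (the ultrapower form of the relative commutant statement of \cite[Theorem~1.1]{IPP05}), hence $q(P'\cap M^{\mathcal U})q\subseteq A'\cap M^{\mathcal U}\subseteq M_1^{\mathcal U}$. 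The delicate point is to check, using the maximality of $z$, that $q(P'\cap M^{\mathcal U})q\npreceq_{M^{\mathcal U}}B^{\mathcal U}$ — see the last paragraph. Granting this, fix a net $(a_\lambda)_\lambda$ of unitaries of $q(P'\cap M^{\mathcal U})q$ with $\lim_\lambda\|\rE_{B^{\mathcal U}}(xa_\lambda y)\|_2=0$ for all $x,y\in M$. Since each $a_\lambda$ lies in $qM_1^{\mathcal U}q$, a routine reduced-word computation (of the same flavour as the mixing argument in the proof of Lemma~\ref{lem-commutant}: conjugating a reduced word whose outermost letters lie in $M_1\ominus B$ by a unitary of $M_1^{\mathcal U}$ that is asymptotically orthogonal to $B^{\mathcal U}$ produces a vector asymptotically orthogonal to the original one) gives $\lim_\lambda\langle a_\lambda\xi a_\lambda^*,\xi\rangle=0$ for every $\xi\in M\ominus M_1$.

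Next, since $M$ is separable and $Q\npreceq_M M_1$, Popa's intertwining criterion yields a sequence $(w_n)_n$ of unitaries of $Q$ with $\lim_n\|\rE_{M_1}(xw_ny)\|_2=0$ for all $x,y\in M$; taking $x=y=q\in M_1$ gives $\lim_n\|\rE_{M_1}(w_n)\|_2=0$, so $w_n':=w_n-\rE_{M_1}(w_n)\in M\ominus M_1$ and $\|w_n'\|_2^2\to\tau^{\mathcal U}(q)$. Now combine the two families: each $a_\lambda$ commutes with $w_n$ (both lie in $qM^{\mathcal U}q$ and $a_\lambda$ commutes with $Q$), and both are unitaries of the corner $qM^{\mathcal U}q$, so $\langle a_\lambda w_na_\lambda^*,w_n\rangle=\tau^{\mathcal U}(q)$. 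On the other hand, expanding $w_n=\rE_{M_1}(w_n)+w_n'$ and using that $\rE_{M_1}(w_n)\in M_1^{\mathcal U}$, $w_n'\perp M_1^{\mathcal U}$ and $a_\lambda\in M_1^{\mathcal U}$, the two cross terms vanish and the remaining $M_1$-term has modulus at most $\|\rE_{M_1}(w_n)\|_2^2$, so $|\tau^{\mathcal U}(q)-\langle a_\lambda w_n'a_\lambda^*,w_n'\rangle|\leq\|\rE_{M_1}(w_n)\|_2^2$ for all $\lambda,n$. Fixing $n$ with $\|\rE_{M_1}(w_n)\|_2$ small, and then, for that $n$, choosing $\lambda$ with $|\langle a_\lambda w_n'a_\lambda^*,w_n'\rangle|$ small, forces $\tau^{\mathcal U}(q)=0$, i.e.\ $q=0$, contradicting $z\neq1$. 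Hence $Q\preceq_M M_1$.

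The step I expect to be the main obstacle is verifying $q(P'\cap M^{\mathcal U})q\npreceq_{M^{\mathcal U}}B^{\mathcal U}$: non-embeddability into $B^{\mathcal U}$ is not automatically inherited by an arbitrary corner. One has to argue, using $P'\cap M^{\mathcal U}\subseteq M_1^{\mathcal U}$ and the maximality of $z$, that the non-$B^{\mathcal U}$-embeddable part of $P'\cap M^{\mathcal U}$ cannot be concentrated on a central subprojection dominated by $z$ — on such a piece $P$ would already be absorbed by $M_1$, contradicting the choice of $z$ — so it must be visible in the $q$-corner. Everything else (the reduced-word/mixing estimate, the extraction of $(w_n)$ from Popa's dichotomy, and the bookkeeping in the last step) is routine given the tools already in place.
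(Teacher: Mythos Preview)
Your argument has a genuine gap at the step you treat as routine, not at the step you flag as the obstacle.

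\textbf{The gap.} You assert that $A'\cap M^{\mathcal U}\subseteq M_1^{\mathcal U}$ as ``the ultrapower form of \cite[Theorem~1.1]{IPP05}''. There is no such ultrapower form: that theorem exploits the amalgamated free product decomposition of $M$, and $M^{\mathcal U}$ is \emph{not} the amalgamated free product of $M_1^{\mathcal U}$ and $M_2^{\mathcal U}$ over $B^{\mathcal U}$. In fact the inclusion is false already in the simplest case $B=\C1$, $M_1=M_2=\rL(\Z)$, $A=M_1$. Indeed, if $A'\cap M^{\mathcal U}\subseteq M_1^{\mathcal U}=A^{\mathcal U}$ then (as $A$ is abelian) $A'\cap M^{\mathcal U}=A^{\mathcal U}$, whence $(A^{\mathcal U})'\cap M^{\mathcal U}=(A'\cap M^{\mathcal U})'\cap M^{\mathcal U}=A$ by \cite[Theorem~2.1]{Po13a}; but $(A^{\mathcal U})'\cap M^{\mathcal U}\supseteq A^{\mathcal U}\supsetneq A$, a contradiction. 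Since every subsequent step of your argument (the reduced-word mixing estimate for $a_\lambda\xi a_\lambda^*$, the vanishing of the cross terms in $\langle a_\lambda w_n a_\lambda^*,w_n\rangle$) uses $a_\lambda\in M_1^{\mathcal U}$, the whole mechanism collapses once this inclusion fails.

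\textbf{What the paper does instead.} The paper's proof is a genuine two-step argument. First, it proves only the weaker statement $\mathscr Q=q(P'\cap M^{\mathcal U})q\preceq_{M^{\mathcal U}} M_1^{\mathcal U}$, and even this requires the main machinery: one chooses $u\in\mathscr U(A^{\mathcal U})$ via Lemma~\ref{lem-intertwining}, a unitary $v\in\mathscr U(\mathscr Q)$ with $\rE_{M_1^{\mathcal U}}(v)=0$ and $[u,v]=0$, and then applies Lemma~\ref{lem-commutant-bis} and Theorem~\ref{thm-main-result-bis} to force $\rE_{B^{\mathcal U}}(q)=0$. Second, from $\mathscr Q\preceq_{M^{\mathcal U}} M_1^{\mathcal U}$ together with $\mathscr Q\npreceq_{M^{\mathcal U}} B^{\mathcal U}$, a separate argument following \cite[Lemma~9.5]{Io12} --- using that $M_1\subset M$ is mixing relative to $B$ --- produces the contradiction.

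\textbf{On your ``main obstacle''.} The step $q(P'\cap M^{\mathcal U})q\npreceq_{M^{\mathcal U}} B^{\mathcal U}$ is in fact immediate: $q\in P'\cap M_1\subseteq P'\cap M^{\mathcal U}$ is a nonzero projection in $P'\cap M^{\mathcal U}$, and non-intertwining always passes to corners by projections of the source algebra (directly from characterization $(\rm ii)$ of Popa's criterion).
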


\begin{proof}[Proof of Claim \ref{claim-intertwining}]
By contradiction, assume that $Q \npreceq_{M} M_1$. Choose a sequence $(w_k)_k$ in $\mathscr U(Q)$ such that $\lim_k \|\rE_{M_1}(x^* w_k y)\|_2 = 0$ for all $x, y \in qM$. Set $\mathscr Q = Q' \cap (q M q)^{\mathcal U} = q(P' \cap M^{\mathcal U})q$. We have $\mathscr Q \npreceq_{M^{\mathcal U}} B^{\mathcal U}$.

Firstly, we show that $\mathscr Q \preceq_{M^{\mathcal U}} M_1^{\mathcal U}$. By contradiction, assume that $\mathscr Q \npreceq_{M^{\mathcal U}} M_1^{\mathcal U}$. Since $A \npreceq_{M_1} B$, by Lemma \ref{lem-intertwining}, we may choose $u \in \mathscr U(A^{\mathcal U})$ such that $\rE_{B^{\mathcal U}}(a u^m b) = 0$ for all $a, b \in M_1$ and all $m \in \Z \setminus \{0\}$. 
Since $M$ is separable, $\mathscr Q \npreceq_{M^{\mathcal U}} M_1^{\mathcal U}$,  $\mathscr Q\subset A'\cap M$ and $u\in \mathscr U(A^{\mathcal U})$, by a standard diagonal argument, we can construct a unitary $v \in \mathscr U(\mathscr Q)$ such that $\rE_{M_1^{\mathcal U}}(v) = 0$ and $vu=uv$. 
%Since $M$ is separable, by a standard diagonal argument, we can construct a unitary $v \in \mathscr U(\mathscr Q)$ such that $\rE_{M_1^{\mathcal U}}(v) = 0$. Since $A \npreceq_{M_1} B$, by Lemma \ref{lem-intertwining}, we may choose $u \in \mathscr U(A^{\mathcal U})$ such that $\rE_{B^{\mathcal U}}(a u^m b) = 0$ for all $a, b \in M_1$ and all $m \in \Z \setminus \{0\}$. Observe that $v u = v \, q u = qu \, v = uq \, v = u  v$. 
By Lemma \ref{lem-commutant-bis}, the set $\mathbf Y_1 = \{u\}' \cap (M^{\mathcal U} \ominus M_1^{\mathcal U}) $ satisfies $a \mathbf Y_1 b \subset \mathbf X_1$ for all $a, b \in M_1$. On the one hand, applying Theorem \ref{thm-main-result-bis}, since $v \in \mathbf Y_1$,  we have 
$$\forall k \in \N, \quad \rE_{B^{\mathcal U}} \left(v (w_k - \rE_{M_1}(w_k)) v^* (w_k - \rE_{M_1}(w_k))^* \right) = 0.$$
On the other hand, for every $k \in \N$, we have $v w_k = w_k v$ and $\rE_{M_1}(w_k) \to 0$ strongly as $k \to \infty$. Altogether, since $vv^* = v^*v = q = w_kw_k^* = w_k^*w_k$, this implies that $\rE_{B^{\mathcal U}}(q) = 0$, a contradiction. Therefore, we have $\mathscr Q \preceq_{M^{\mathcal U}} M_1^{\mathcal U}$.

Secondly, we derive a contradiction using the proof of \cite[Lemma 9.5]{Io12}. By \cite[Lemma 9.5, Claim 1]{Io12}, there exist $\delta > 0$ and a nonempty finite subset $\mathscr F \subset qM$ such that 
$$\forall v \in \mathscr U(\mathscr Q), \quad \sum_{a, b \in \mathscr F} \|\rE_{M_1^{\mathcal U}}(b^* v a)\|_2^2 \geq \delta.$$
Denote by $\mathbf M_1 \subset M_1^{\mathcal U}$ the set of all elements $x \in M_1^{\mathcal U}$ such that $\rE_{B^{\mathcal U}}(d^* x c) = 0$ for all $c, d \in M_1$. Then denote by $\mathscr K \subset \rL^2((qMq)^{\mathcal U})$ the $\|\cdot\|_2$-closure of the linear span of the set $\left\{a x b^* \mid a, b \in qM, x \in \mathbf M_1\right\}$ and by $e : \rL^2((qMq)^{\mathcal U}) \to \mathscr K$ the corresponding orthogonal projection. 

Since $\mathscr Q \npreceq_{M^{\mathcal U}} B^{\mathcal U}$ and since $M$ is separable, by a standard diagonal argument, we can construct a unitary $v \in \mathscr U(\mathscr Q)$ such that $\rE_{B^{\mathcal U}}(d^* v c) = 0$ for all $c, d \in qM$. Set $\xi = e(v) \in \mathscr K$ and $\eta = \sum_{a, b \in \mathscr F} b \rE_{M_1^{\mathcal U}}(b^* v a) a^* \in (qMq)^{\mathcal U}$. 
Then for every $c,d\in M_1$ and $a,b\in\mathscr F$, we have $\rE_{B^{\mathcal U}}(d^* \rE_{M_1^{\mathcal U}}(b^* v a) c) = \rE_{B^{\mathcal U}}(d^* b^* v a c)=0$.
Thus $\eta \in \mathscr K$ and we have 
$$\langle \xi, \eta \rangle = \langle v, \eta \rangle = \sum_{a, b \in \mathscr F} \|\rE_{M_1^{\mathcal U}}(b^* v a)\|_2^2 \geq \delta.$$
It follows that $\xi = e(v) \neq 0$.

On the one hand, since $\mathscr K \subset \rL^2((qMq)^{\mathcal U})$ is a $qMq$-$qMq$-bimodule and since $v \in \mathscr Q$, for every $k \in \N$, we have $w_k \xi w_k^*= w_k e(v) w_k^* = e(w_k v w_k^*) = e(v) = \xi$. On the other hand, following the proof of \cite[Lemma 9.5, Claim 2]{Io12}, we show that $\lim_k \langle w_k \xi w_k^*, \xi \rangle = 0$. This will give a contradiction. By linearity and density, it suffices to show that $\lim_k \langle w_k \, a_1 x_1 b_1^* \,  w_k^*, a_2 x_2 b_2^*\rangle = 0$ for all $a_1, a_2, b_1, b_2 \in q M$ and all $x_1, x_2 \in \mathbf M_1$. So let us fix $a_1, a_2, b_1, b_2 \in q M$ and  $x_1, x_2 \in \mathbf M_1$. We may further assume that $\max \left\{\|a_i\|_\infty, \|b_i\|_\infty, \|x_i\|_\infty \mid i \in \{1, 2\}\right\} \leq 1$. Then for every $k \in \N$, we have
$$|\langle w_k \, a_1 x_1 b_1^* \,  w_k^*, a_2 x_2 b_2^*\rangle| = |\tau^{\mathcal U}(x_2^* a_2^* w_k a_1 x_1 b_1^*  w_k^* b_2)| \leq \|\rE_{M_1^{\mathcal U}}( a_2^* w_k a_1 \, x_1 \, b_1^*  w_k^* b_2)\|_2.$$
Using the amalgamated free product structure $M = M_1 \ast_B M_2$, the inclusion $M_1 \subset M$ is mixing relative to $B$. In particular, since $x_1 \in \mathbf M_1$, we have $\rE_{M_1^{\mathcal U}}(c^* x_1 d) = \rE_{M_1^{\mathcal U}}(c^* x_1)= \rE_{M_1^{\mathcal U}}(x_1 d)=0$ for all $c, d \in M \ominus M_1$ (see e.g.\! the proof of \cite[Claim 2.5]{CH08}). This implies 
$$\forall k \in \N, \quad \rE_{M_1^{\mathcal U}}( a_2^* w_k a_1 \, x_1 \, b_1^* w_k^* b_2) = \rE_{M_1}(a_2^* w_k a_1) \, x_1 \rE_{M_1}(b_1^* w_k^* b_2).$$ 
Thus, we have 
$$\limsup_k |\langle w_k \, a_1 x_1 b_1^* \,  w_k^*, a_2 x_2 b_2^* \rangle| \leq \limsup_k \| \rE_{M_1}(a_2^* w_k a_1)\|_2 = 0.$$
This gives a contradiction and finishes the proof of Claim \ref{claim-intertwining}.
\end{proof}

Since $Q \preceq_{M} M_1$, there exist $n \geq 1$, a projection $r \in \mathbf M_n(M_1)$, a nonzero partial isometry $v = [v_1, \dots, v_n] \in \mathbf M_{1, n}(z^\perp M)r$ and a unital normal $\ast$-homomorphism $\pi : Q \to r\mathbf M_n(M_1)r$  such that $a v = v \pi(a)$ for all $a \in Q$. In particular, we have $A v_i \subset \sum_{j = 1}^n v_j M_1$ for every $i \in \{1, \dots, n\}$. By \cite[Theorem 1.1]{IPP05}, since $A \npreceq_{M_1} B$, we have $v_i \in M_1$ for every $i \in \{1, \dots, n\}$. It follows that $vv^* \in Q' \cap M_1$ and  $Q vv^* \subset vv^* M_1 vv^*$. Thus, we obtain $P(z + vv^*) \subset (z + vv^*) M_1 (z + vv^*)$. This contradicts the maximality of the projection $z \in P' \cap M_1$. Therefore, we have $z = 1$ and so $P \subset M_1$.
\end{proof}

\begin{rem}\label{rem-amenable}
We make two observations.

\begin{itemize}

\item [$(\rm i)$] If $A \subset M_1$ is a von Neumann subalgebra such that $A \npreceq_{M_1} B$, then we have $A \npreceq_M B$. Indeed, this follows from the amalgamated free product structure $M = M_1 \ast_B M_2$ and the fact that the inclusion $M_1 \subset M$ is mixing relative to $B$ (see the proof of Claim \ref{claim-intertwining}).

\item [$(\rm ii)$] If $P \subset M$ is an amenable von Neumann subalgebra such that $P \npreceq_{M} B$, then we have $P' \cap M^{\mathcal U} \npreceq_{M^{\mathcal U}} B^{\mathcal U}$. Indeed, by contradiction, assume that $P' \cap M^{\mathcal U} \preceq_{M^{\mathcal U}} B^{\mathcal U}$. On the one hand, by \cite[Lemma 9.5, Claim 1]{Io12}, there exist $\delta > 0$ and a nonempty finite subset $\mathscr F \subset M$ such that 
\begin{equation}\label{eq-embedding}
\forall v \in \mathscr U(P' \cap M^{\mathcal U}), \quad \sum_{a, b \in \mathscr F} \|\rE_{B^{\mathcal U}}(b^* v a)\|_2^2 \geq \delta.
\end{equation}
On the other hand, since $P$ is amenable hence hyperfinite by Connes' fundamental result \cite{Co75}, there exists an increasing sequence $(P_k)_k$ of finite dimensional von Neumann subalgebras of $P$ such that $(\bigcup_kP_k)\dpr=P$ and $P_k' \cap P \subset P$ has finite index for every $k \in \N$ (see e.g.\! the proof of \cite[Theorem 8.1]{Ho12}). Since $P \npreceq_{M} B$, it follows that $P_k'\cap P \npreceq_{M} B$ for every $k \in \N$. Since $M$ is separable, by a standard diagonal argument, we can construct a unitary $v \in \mathscr U(P' \cap M^{\mathcal U})$ such that $\rE_{B^{\mathcal U}}(b^* v a) = 0$ for all $a, b \in M$. This contradicts \eqref{eq-embedding}. Therefore, we have $P' \cap M^{\mathcal U} \npreceq_{M^{\mathcal U}} B^{\mathcal U}$. 
\end{itemize}
\end{rem}

%%%%%%%%%%%%%%%%%%%%%%%%%%

\section{A lifting theorem and proofs of Theorems \ref{inductive} and  \ref{orthogonal}}

\subsection{A lifting theorem}
The goal of this subsection is to establish the following lifting theorem which will be needed in the proof of Theorem \ref{inductive}.
\begin{thm}\label{lifting}
Let $\mathcal U$ be an ultrafilter on a set $K$ and $(M_k,\tau_k),k\in K$, be tracial von Neumann algebras. Let $A,B\subset\prod_{\mathcal U}M_k$ be separable abelian von Neumann subalgebras which are $2$-independent in $\prod_{\mathcal U}M_k$ with respect to $(\tau_k)^{\mathcal U}$.  Then there exist orthogonal abelian von Neumann subalgebras $C_k,D_k\subset M_k$, for every $k\in K$, such that $A\subset\prod_{\mathcal U}C_k$ and $B\subset\prod_{\mathcal U}D_k$.
\end{thm}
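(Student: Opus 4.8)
The plan is to reduce the statement to a coordinate-wise perturbation problem for a single pair of self-adjoint generators, prove a uniform local perturbation lemma, and reassemble using the combinatorics of $\mathcal U$. For the first reduction, recall that a separable abelian von Neumann algebra has a single self-adjoint generator, so write $A=W^*(a)$ and $B=W^*(b)$ with $a,b\in\prod_{\mathcal U}M_k$ self-adjoint contractions, and fix lifts $a=(a_k)^{\mathcal U}$, $b=(b_k)^{\mathcal U}$ by self-adjoint contractions $a_k,b_k\in M_k$. It then suffices to produce \emph{new} such lifts $a=(a_k')^{\mathcal U}$, $b=(b_k')^{\mathcal U}$ with $W^*(a_k')$ and $W^*(b_k')$ orthogonal in $(M_k,\tau_k)$ for $\mathcal U$-almost every $k$: setting $C_k=W^*(a_k')$, $D_k=W^*(b_k')$ (and $C_k=D_k=\C 1$ on the exceptional set), the algebra $\prod_{\mathcal U}C_k$ is abelian and contains $a$, hence contains $A$, and similarly $B\subset\prod_{\mathcal U}D_k$.

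\emph{Making $2$-independence local.} Two-independence of $A$ and $B$ with respect to $(\tau_k)^{\mathcal U}$ is encoded by a countable family of identities among the joint $*$-moments of $a$ and $b$; evaluated on the lifts, each of these asserts that a scalar quantity built polynomially from $a_k,b_k$ tends to $0$ along $\mathcal U$. If $\mathcal U$ is not countably cofinal then it is countably complete, and then the pushforward of $\mathcal U$ under $k\mapsto(\text{the countable tuple of joint moments of }(a_k,b_k))$ is a countably complete ultrafilter on a set of cardinality $\le 2^{\aleph_0}$, hence principal; so on a set in $\mathcal U$ all these moments are constant and satisfy the defining identities, i.e. $W^*(a_k)\perp W^*(b_k)$ there, and we are done by the first paragraph. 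So assume $\mathcal U$ is countably cofinal; intersecting with a decreasing sequence $(B_n)\subset\mathcal U$ with $\bigcap_n B_n=\emptyset$, a standard diagonalization yields a function $k\mapsto(m_k,\varepsilon_k)$ with $m_k\to\infty$ and $\varepsilon_k\to 0$ along $\mathcal U$ such that, for every $k$, the pair $(W^*(a_k),W^*(b_k))$ is ``$\varepsilon_k$-$2$-independent up to order $m_k$'', meaning all the above identities restricted to moments of degree $\le m_k$ hold within $\varepsilon_k$.

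\emph{The local perturbation lemma.} The crux is the following uniform statement: for every $\eta>0$ there exist $m\in\N$ and $\varepsilon>0$ such that, for \emph{any} tracial von Neumann algebra $(N,\tau)$ and any self-adjoint contractions $a,b\in N$ for which $W^*(a)$ and $W^*(b)$ are $\varepsilon$-$2$-independent up to order $m$, there exist self-adjoint contractions $a',b'\in N$ with $\|a-a'\|_2<\eta$, $\|b-b'\|_2<\eta$ and $W^*(a')\perp W^*(b')$. Granting this, one applies it at each coordinate $k$ with a suitable $\eta=\eta_k\to 0$ along $\mathcal U$ (obtained by a further diagonalization matching $\eta_k$ against $m_k,\varepsilon_k$, using only that $(m_k,\varepsilon_k)\to(\infty,0)$), producing $a_k',b_k'$; since $\|a_k-a_k'\|_2,\|b_k-b_k'\|_2\to 0$ along $\mathcal U$ the new sequences still lift $a$ and $b$, and the first paragraph finishes the proof.

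\emph{On the local lemma, and the main obstacle.} To prove it I would first replace $a$ by a nearby element of finite spectrum, so that $A_0:=W^*(a)=\bigoplus_{i=1}^r\C p_i$, and $b$ by $\sum_{j=1}^s\mu_j q_j$ with $(q_j)_j$ a partition of unity; since $A_0$ and $\{q_j\}_j''$ are then finite-dimensional abelian, orthogonality of the latter to $A_0$ is exactly the system $\rE_{A_0}(q_j')=\tau(q_j')1$ for all $j$, i.e. the corner traces $\tau(p_i)^{-1}\tau(p_iq_j')$ are independent of $i$. Approximate $1$-independence (a consequence of the hypothesis) already forces these corner traces to lie within $O(\varepsilon)$ of $\tau(q_j)$, so the task is to correct a small discrepancy by redistributing projection mass among the corners $p_iNp_i$ via partial isometries of trace $O(\varepsilon)$ in $N$, handling the $q_j$ one at a time so as to preserve $\sum_j q_j'=1$. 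The delicate points --- and where the stronger hypothesis of $2$- rather than merely $1$-independence is used --- are (a) guaranteeing enough room in $N$ to carry out the redistribution, and (b) keeping the perturbation bound $\eta=\eta(m,\varepsilon)\to 0$ \emph{uniform} over all $(N,a,b)$ and all dimensions $r,s$: the higher-order identities rule out the degenerate configurations (such as a $q_j$ concentrated in a single corner of $A_0$) in which no small correction exists, and make the correction scheme quantitatively stable. Establishing this uniform local lemma is where essentially all of the difficulty lies.
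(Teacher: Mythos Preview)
Your reduction to a coordinate-wise perturbation problem and the case split on countable cofinality of $\mathcal U$ are correct and broadly parallel the paper. The genuine gap is the local perturbation lemma: you state it, concede that ``essentially all of the difficulty lies'' there, and give only a sketch. That sketch --- correcting the corner traces $\tau(p_iq_j)$ by ``redistributing projection mass among the corners $p_iNp_i$ via partial isometries of trace $O(\varepsilon)$'' --- is not a proof and is not the right mechanism: in an arbitrary tracial von Neumann algebra $N$ there need be no partial isometries between prescribed subprojections of the $p_i$ (nothing rules out $N$ having large abelian or type~I summand), and you give no argument for the uniformity in the number of atoms $r,s$ that you yourself flag as delicate.

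The paper's mechanism is unitary conjugation, and it is where $2$-independence is really used. Fixing orthogonal bases $x=(x_i)$ of $A_n\ominus\C1$ and $y=(y_j)$ of $B_n\ominus\C1$, one seeks $h=h^*$ with $\tau(h\,\xi_{i,j})=\tfrac12\tau(x_iy_j)$, where $\xi_{i,j}=-\tfrac{\mathrm i}{2}[x_i,y_j]$; then $u=\exp(\mathrm ih)$ satisfies $|\tau(ux_iu^*y_j)|=O(\|h\|_\infty^2)$. Two-independence gives exactly that the $\xi_{i,j}$ form a well-conditioned system: $\|[x_i,y_j]\|_2=\sqrt2\,\|x_i\|_2\|y_j\|_2>0$ and $\langle[x_i,y_j],[x_{i'},y_{j'}]\rangle=2\tau(x_ix_{i'})\tau(y_jy_{j'})=0$ for $(i,j)\ne(i',j')$, so such an $h$ exists with $\|h\|_\infty$ controlled linearly by the defect $\max_{i,j}|\tau(x_iy_j)|$. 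Iterating this Newton-type step drives the defects to zero while the product of the conjugating unitaries stays close to $1$ in $\|\cdot\|_\infty$. This works in \emph{any} tracial $N$, with no appeal to factoriality or existence of partial isometries. Note also that the paper never arranges $W^*(a_k')\perp W^*(b_k')$ in full; it only makes finite-dimensional pieces $C_k\perp D_k$ of \emph{fixed} dimensions $a_n,b_n$ at each $k$, and then lets $n=n(k)\to\infty$ along $\mathcal U$ by diagonalization. That sidesteps the uniformity-in-$r,s$ problem your single-generator formulation creates.
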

We do not know whether Theorem \ref{lifting} still holds if we replace the assumption that $A$ and $B$ are $2$-independent with the weaker assumption that $A$ and $B$ are orthogonal. When $\dim(A)=2$ and $\dim(B)=3$, Theorem \ref{lifting} follows from \cite[Lemma 3.1]{CIKE22}, which moreover only assumes that $A$ and $B$ are orthogonal. Theorem \ref{lifting} is new in all other cases, including when $A$ and $B$ are finite dimensional and of dimension at least $3$.

The proof of Theorem \ref{lifting} relies on the following perturbation lemma. First, we need to introduce some additional terminology.
Let $(M,\tau)$ be a tracial von Neumann algebra. We denote by $M_{\text {sa},1}$ the set of $x\in M$ such that $x=x^*$ and $\|x\|_\infty \leq 1$. 
Let $x=(x_1,\dots, x_m)\in M^m$ and $y=(y_1,\dots,y_n)\in M^n$, for some $m,n\in\mathbb N$. For $u\in\mathscr U(M)$, we write $uxu^*=(ux_1u^*, \dots, ux_mu^*)$. We define 
\begin{align*}
\delta(x,y) &=\min \left\{\|[x_i,y_j]\|_2\mid 1\leq i\leq m,1\leq j\leq n \right\}, \\
\varepsilon(x,y) &=\max \left\{|\tau(x_iy_j)|\mid 1\leq i\leq m,1\leq j\leq n \right\}, \\
\gamma(x,y) &=\max \left\{|\langle [x_i,y_j],[x_{i'},y_{j'}]\rangle|\mid 1\leq i,i'\leq m,1\leq j,j'\leq n, (i,j)\not=(i',j') \right\}.
\end{align*}

\begin{lem}\label{perturbation}
Let $(M,\tau)$ be a tracial von Neumann algebra, $x=(x_1,\dots, x_m)\in M_{\emph{sa},1}^m$ and $y=(y_1,\dots,y_n)\in M_{\emph{sa},1}^n$, for  $m,n\in\mathbb N$. Set $\delta_0=\delta(x,y), \varepsilon_0=\varepsilon(x,y),\gamma_0=\gamma(x,y)$. Assume that  $13mn\sqrt{\varepsilon_0} < \delta_0^2 - (mn-1)\gamma_0$. Then there exists $v\in\mathscr U(M)$ such that  
$$\|v-1\|_\infty \leq \frac{8mn\varepsilon_0}{\delta_0^2-(mn-1)\gamma_0} \leq\frac{8}{13}\sqrt{\varepsilon_0} \quad \text{and} \quad  \varepsilon(vxv^*,y)=0.$$
\end{lem}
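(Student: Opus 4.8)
The plan is to produce $v$ of the form $v = \exp(\mathrm{i}h)$ with $h = h^* \in M$ small, by setting up and solving an explicit nonlinear equation via a contraction mapping argument. Write $N = mn$ and, for each pair $(i,j)$, put $z_{ij} = -\mathrm{i}[x_i,y_j] \in M_{\text{sa}}$, so $\|z_{ij}\|_\infty \le 2$ and $\|z_{ij}\|_2 = \|[x_i,y_j]\|_2$. Let $V = \operatorname{span}_{\mathbb R}\{z_{ij}\} \subset M_{\text{sa}}$, with the inner product inherited from $\rL^2(M)$.

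The first step is to control the Gram matrix $G = \big(\langle z_{ij}, z_{kl}\rangle\big)_{(ij),(kl)}$: it is Hermitian and positive semidefinite, its diagonal entries equal $\|[x_i,y_j]\|_2^2 \ge \delta_0^2$, and its off-diagonal entries have modulus $\le \gamma_0$. By Gershgorin's circle theorem, every eigenvalue of $G$ is $\ge \delta_0^2 - (N-1)\gamma_0 =: D$, which is positive by hypothesis (the same bound holds for the real part $\tfrac12(G + G^{\mathsf T})$, which governs real coefficient vectors); hence the $z_{ij}$ are linearly independent, $\dim_{\mathbb R}V = N$, and the linear map $L : V \to \mathbb R^{N}$, $L(h) = \big(\langle h, z_{ij}\rangle\big)_{ij}$, is an isomorphism with $\|L^{-1}\|_{2\to 2} \le D^{-1/2}$. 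Moreover, for $h = \sum c_{ij}z_{ij} \in V$ one has $\|h\|_\infty \le 2\|c\|_1 \le 2\sqrt N\,\|c\|_2 \le 2\sqrt N\,D^{-1/2}\|h\|_2$.

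Next, for $h = h^*$ consider $\Psi(h) = \big(\tau(e^{\mathrm{i}h}x_i e^{-\mathrm{i}h}y_j)\big)_{ij} \in \mathbb R^{N}$; the entries are real because $x_i, y_j$ are self-adjoint. Expanding $\operatorname{Ad}(e^{\mathrm{i}h})$ as a power series and using $\tau([h,x_i]y_j) = \tau(h[x_i,y_j])$ gives $\Psi(h) = b_0 - L(h) + R(h)$, where $b_0 = (\tau(x_iy_j))_{ij}$ satisfies $\|b_0\|_2 \le \sqrt N\,\varepsilon_0$, the linear term is $\mathrm{i}\tau(h[x_i,y_j]) = -\langle h, z_{ij}\rangle$, and the remainder obeys $\|R(h)\|_2 \le C\sqrt N\,\|h\|_\infty^2$ together with a Lipschitz estimate $\|R(h) - R(h')\|_2 \le C\sqrt N\,\max(\|h\|_\infty,\|h'\|_\infty)\,\|h-h'\|_2$ for $\|h\|_\infty, \|h'\|_\infty \le 1$, with $C$ an absolute constant. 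Restricting to $h \in V$, the equation $\Psi(h) = 0$ becomes the fixed-point equation $h = T(h) := L^{-1}\big(b_0 + R(h)\big)$. Combining the estimates above with the norm comparison on $V$, one checks that as soon as $\varepsilon_0$ is small enough relative to $D$ — and $13mn\sqrt{\varepsilon_0} < D$ is precisely the clean sufficient condition — the map $T$ is a $\tfrac12$-contraction of the $\|\cdot\|_2$-ball $\{\|h\|_2 \le 2D^{-1/2}\sqrt N\,\varepsilon_0\}$ of $V$ into itself. Its unique fixed point $h^*$ then satisfies $\|h^*\|_2 \le 2\|T(0)\|_2 \le 2D^{-1/2}\sqrt N\,\varepsilon_0$, hence $\|h^*\|_\infty \le 2\sqrt N\,D^{-1/2}\|h^*\|_2 \le \tfrac{8mn\,\varepsilon_0}{D}$ after collecting constants. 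Taking $v = \exp(\mathrm{i}h^*) \in \mathscr U(M)$ yields $\|v - 1\|_\infty \le \|h^*\|_\infty \le \tfrac{8mn\,\varepsilon_0}{\delta_0^2 - (mn-1)\gamma_0}$ and $\varepsilon(vxv^*,y) = 0$ (since $\Psi(h^*) = 0$); the bound by $\tfrac{8}{13}\sqrt{\varepsilon_0}$ is then immediate from the hypothesis.

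I expect the only real difficulty to be the quantitative bookkeeping: the argument mixes the three norms $\|\cdot\|_1, \|\cdot\|_2, \|\cdot\|_\infty$ on $V$, all comparisons being mediated by $G$ and hence by $D$, and these must be threaded through the power-series remainder bounds so that the contraction constant is genuinely $\le \tfrac12$ and the final estimate comes out with the asserted numerical constant under the asserted hypothesis. A more computational alternative replaces the fixed-point theorem by an explicit Newton iteration $x^{(t+1)} = e^{\mathrm{i}h_t}x^{(t)}e^{-\mathrm{i}h_t}$, with $h_t$ chosen in $\operatorname{span}\{[x_i^{(t)},y_j]\}$ to cancel the linear term of $\Psi$ at $x^{(t)}$; there one additionally verifies that $\delta(x^{(t)},y)$ and $\gamma(x^{(t)},y)$ drift only negligibly along the iteration — automatic, since the total displacement is a geometric series dominated by its small first term — so that $D$ stays bounded below throughout and the iteration converges quadratically.
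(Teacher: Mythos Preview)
Your approach is correct and genuinely different from the paper's. The paper takes precisely the route you sketch as your ``more computational alternative'': it proves a single-step lemma (Lemma~\ref{moveone}) which, given $x,y$, produces $u=\exp(\mathrm{i}h)$ with $h$ in the real span of the $\xi_{ij}=-\tfrac{\mathrm{i}}{2}[x_i,y_j]$ chosen so that $\tau(h\xi_{ij})=\tfrac{1}{2}\tau(x_iy_j)$ (via an $\ell^1$-coefficient bound, Lemma~\ref{linearindep}), and then iterates, tracking $\delta_k,\varepsilon_k,\gamma_k$ explicitly and showing $\lambda_{k+1}\le\lambda_k/2$ so that $v=\lim_k u_k\cdots u_1$ exists with $\|v-1\|_\infty\le\sum_k 2\lambda_k\le 4\lambda_1=\tfrac{8mn\varepsilon_0}{D}$. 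Your main proposal instead solves $\Psi(h)=0$ directly by a contraction on $V=\operatorname{span}_{\mathbb R}\{z_{ij}\}$; this is cleaner conceptually and avoids tracking the drift of $\delta$ and $\gamma$, since the Gram matrix is fixed once and for all. The paper's iterative scheme, on the other hand, makes the numerical constants fall out transparently: the $8$ is exactly $4\lambda_1$, and the threshold $13$ arises from the inequality $(\delta_0^2-(mn-1)\gamma_0)^2>169(mn)^2\varepsilon_0$ needed to guarantee $\lambda_{k+1}\le\lambda_k/2$ at every step.

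One small correction: your Lipschitz estimate $\|R(h)-R(h')\|_2\le C\sqrt N\max(\|h\|_\infty,\|h'\|_\infty)\|h-h'\|_2$ is actually fine as stated (place the single $\|\cdot\|_2$ on the factor $\operatorname{ad}_{h-h'}$ in the telescoping expansion of $\operatorname{ad}_h^k-\operatorname{ad}_{h'}^k$), but you should verify this explicitly rather than assert it, since the naive bound gives $\|h-h'\|_\infty$. As you acknowledge, the remaining work is purely bookkeeping to confirm that the hypothesis $13mn\sqrt{\varepsilon_0}<D$ really yields a contraction and the bound $8mn\varepsilon_0/D$; this is routine but not done in your write-up.
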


Note that Lemma \ref{perturbation} is interesting even when $M$ is finite dimensional.
To prove Lemma \ref{perturbation}, we will need two auxiliary lemmas. 

\begin{lem}\label{linearindep}
Let $(M,\tau)$ be a tracial von Neumann algebra, $\xi_1,\dots,\xi_p\in M_{\emph{sa},1}$ and $\alpha_1,\dots,\alpha_p\in\mathbb R$, for some $p\geq 2$. Let $\delta\in (0,1)$ and $\varepsilon\in (0,\frac{\delta^2}{p-1})$. Assume that $\|\xi_i\|_2 \geq\delta$, for every $1\leq i\leq p$, and $|\langle \xi_i,\xi_j\rangle|\leq\varepsilon$, for every $1\leq i<j\leq p$.
Then there exists $h\in M$ such that $h=h^*, \|h\|_\infty\leq\frac{\sum_{j=1}^p|\alpha_j|}{\delta^2 -(p-1)\varepsilon}$ and $\tau(h\xi_i)=\alpha_i$, for every $1\leq i\leq p$.
\end{lem}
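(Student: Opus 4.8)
The plan is to find $h$ as a real linear combination $h = \sum_{j=1}^p \beta_j \xi_j$ of the given self-adjoint contractions and to solve for the coefficients $\beta_j$ so that the constraints $\tau(h\xi_i) = \alpha_i$ hold. Writing $G = (\langle \xi_i, \xi_j\rangle)_{i,j}$ for the Gram matrix, this amounts to solving the linear system $G\beta = \alpha$, where $\alpha = (\alpha_i)_i$ and $\beta = (\beta_j)_j$. The diagonal entries satisfy $G_{ii} = \|\xi_i\|_2^2 \geq \delta^2$, while the off-diagonal entries satisfy $|G_{ij}| \leq \varepsilon$; since $\varepsilon < \frac{\delta^2}{p-1}$, the matrix $G$ is strictly diagonally dominant and hence invertible, so a unique solution $\beta$ exists.

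The main work is then to bound $\|h\|_\infty$. Since $\|\xi_j\|_\infty \leq 1$, the triangle inequality gives $\|h\|_\infty \leq \sum_{j=1}^p |\beta_j|$, so it suffices to show $\sum_j |\beta_j| \leq \frac{\sum_j |\alpha_j|}{\delta^2 - (p-1)\varepsilon}$. To see this, pick an index $k$ with $|\beta_k| = \max_j |\beta_j|$ and look at the $k$-th equation $\sum_j G_{kj}\beta_j = \alpha_k$. Isolating the diagonal term,
\begin{equation*}
\delta^2 |\beta_k| \leq |G_{kk}||\beta_k| = \Bigl| \alpha_k - \sum_{j \neq k} G_{kj}\beta_j \Bigr| \leq |\alpha_k| + \varepsilon \sum_{j\neq k} |\beta_j| \leq |\alpha_k| + (p-1)\varepsilon |\beta_k|,
\end{equation*}
so $|\beta_k| \leq \frac{|\alpha_k|}{\delta^2 - (p-1)\varepsilon} \leq \frac{\sum_j |\alpha_j|}{\delta^2-(p-1)\varepsilon}$. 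This only controls the largest coefficient, which is not quite the sum $\sum_j|\beta_j|$ that I want; to get the full sum I would instead argue coordinatewise: for each index $i$, the $i$-th equation gives $\delta^2|\beta_i| \leq |\alpha_i| + \varepsilon\sum_{j\neq i}|\beta_j| \leq |\alpha_i| + (p-1)\varepsilon\max_j|\beta_j|$, and summing over $i$ after first bounding $\max_j|\beta_j|$ as above yields $\sum_i |\beta_i| \leq \frac{\sum_i|\alpha_i|}{\delta^2-(p-1)\varepsilon}$ once the elementary arithmetic is carried through. Either way, the mechanism is strict diagonal dominance of the Gram matrix.

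The one subtlety to watch — and the step I expect to require the most care — is that $h$ must be genuinely self-adjoint: this is automatic here because the $\xi_j$ are self-adjoint and the $\beta_j$ come out real (the system $G\beta=\alpha$ has real data since $G$ is a real symmetric positive-definite-by-dominance matrix and $\alpha$ is real). So no Hermitization step is needed. Everything else is the diagonal-dominance estimate above, and the final bound $\|h\|_\infty \leq \frac{\sum_{j=1}^p |\alpha_j|}{\delta^2 - (p-1)\varepsilon}$ follows directly.
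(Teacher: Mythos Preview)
Your approach is essentially the same as the paper's: take $h=\sum_j\beta_j\xi_j$, solve $G\beta=\alpha$, and bound $\sum_j|\beta_j|$ using diagonal dominance of the Gram matrix. However, your summation step as written does not quite close. From $\delta^2|\beta_i|\le|\alpha_i|+(p-1)\varepsilon\max_j|\beta_j|$ summed over $i$, together with your bound $\max_j|\beta_j|\le\frac{|\alpha_k|}{\delta^2-(p-1)\varepsilon}$, you get $\delta^2\sum_i|\beta_i|\le\sum_i|\alpha_i|+p(p-1)\varepsilon\cdot\frac{|\alpha_k|}{\delta^2-(p-1)\varepsilon}$, and this does not simplify to $\sum_i|\beta_i|\le\frac{\sum_i|\alpha_i|}{\delta^2-(p-1)\varepsilon}$ in general.

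The fix is to skip the detour through $\max_j|\beta_j|$ entirely and sum your \emph{first} coordinatewise inequality $\delta^2|\beta_i|\le|\alpha_i|+\varepsilon\sum_{j\ne i}|\beta_j|$ directly over $i$: this gives $\delta^2\sum_i|\beta_i|\le\sum_i|\alpha_i|+(p-1)\varepsilon\sum_j|\beta_j|$, hence $(\delta^2-(p-1)\varepsilon)\sum_i|\beta_i|\le\sum_i|\alpha_i|$, which is exactly the desired bound. This is precisely what the paper does.
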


\begin{proof}
First, we claim that $\xi_1,\dots,\xi_p$ are linearly independent. Otherwise, we can find $\beta_1,\dots,\beta_p\in\mathbb R$ such that $\beta_1\xi_1+\cdots+\beta_p\xi_p=0$ and $\max \left\{|\beta_i|\mid 1\leq i\leq p \right\}>0$. Let $1\leq j\leq p$ such that $|\beta_j|=\max \left\{|\beta_i|\mid 1\leq i\leq p \right\}$. Then $-\beta_j\xi_j=\sum_{i\not=j}\beta_i\xi_i$ and thus $|\beta_j|\, \|\xi_j\|_2^2 \leq\sum_{i\not=j}|\beta_i|\, |\langle\xi_i,\xi_j\rangle|\leq |\beta_j|\sum_{i\not=j}|\langle\xi_i,\xi_j\rangle|$. Since $\beta_j\not=0$, we derive that $\|\xi_j\|_2^2\leq\sum_{i\not=j}|\langle \xi_i,\xi_j\rangle|$, which implies that $\delta^2\leq (p-1)\varepsilon$, contradicting that $\delta^2 > (p-1)\varepsilon$. 

Since $\xi_1,\dots,\xi_p$ are linearly independent, it follows that we can find $\lambda_1,\dots,\lambda_p\in\mathbb R$ such that $h=\sum_{i=1}^p\lambda_i\xi_i$ satisfies $\tau(h\xi_j)=\langle h,\xi_j\rangle=\alpha_j$, for every $1\leq j\leq p$. Then $|\alpha_j|=|\sum_{i=1}^p\lambda_i\langle\xi_i,\xi_j\rangle|\geq |\lambda_j|\|\xi_j\|_2^2-\sum_{i\not=j}|\lambda_i||\langle\xi_i,\xi_j\rangle|$ and thus
\begin{equation}\label{lambda}
\forall 1\leq j\leq p, \quad  |\alpha_j|\geq \delta^2 |\lambda_j|-\varepsilon\sum_{i\not=j}|\lambda_i|.
\end{equation}
Adding the inequalities in \eqref{lambda} for $1\leq j\leq p$ gives $\sum_{j=1}^p|\alpha_j|\geq (\delta^2 - (p-1)\varepsilon)\sum_{j=1}^p|\lambda_j|$. Thus, $\|h\|_\infty \leq\sum_{j=1}^p|\lambda_j|\leq \frac{\sum_{j=1}^p|\alpha_j|}{\delta^2-(p-1)\varepsilon}$. Since $h=h^*$, this finishes the proof.
\end{proof}

\begin{lem}\label{moveone}
Let $(M,\tau)$ be a tracial von Neumann algebra, $x=(x_1,\dots, x_m)\in M_{\emph{sa},1}^m$ and $y=(y_1,\dots,y_n)\in M_{\emph{sa},1}^n$, for some $m,n\in\mathbb N$. %Assume that $\|x_i\|\leq 1$ and $\|y_j\|\leq 1$, for every $1\leq i\leq m$ and $1\leq j\leq n$. 
Set $\delta=\delta(x,y), \varepsilon=\varepsilon(x,y),\gamma=\gamma(x,y)$. Assume that $2mn\varepsilon<\delta^2-(mn-1)\gamma$ and set $\lambda=\frac{2 mn\varepsilon}{\delta^2 -(mn-1)\gamma}<1$.

Then there exists $u\in\mathscr U(M)$ such that 
\begin{itemize}
\item [$(\rm i)$] $\|u-1\|_\infty \leq 2\lambda$.
\item [$(\rm ii)$] $\delta(uxu^*,y)\geq \delta-8\lambda$.
\item [$(\rm iii)$] $\varepsilon(uxu^*,y)\leq 4\lambda^2$.
\item [$(\rm iv)$] $\gamma(uxu^*,y)\leq\gamma+32\lambda$.
\end{itemize}
\end{lem}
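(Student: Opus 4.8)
The goal is to correct one pair $(x_i, y_j)$ at a time: produce a unitary $u$ close to $1$ that kills the inner product $\tau(x_iy_j)$ which is largest in absolute value (or, more precisely, handles one coordinate so that after $mn$ iterations of this lemma, $\varepsilon$ becomes $0$ — hence the name \emph{moveone}). The unitary will be of the form $u = \exp(\mathrm{i}t h)$ for a self-adjoint $h$ of controlled norm, where $h$ is built using Lemma \ref{linearindep}. First I would fix indices $(i_0, j_0)$ realizing $\varepsilon = \varepsilon(x,y) = |\tau(x_{i_0}y_{j_0})|$ (if $\varepsilon = 0$ take $u = 1$), and consider the $mn$ commutator vectors $\xi_{ij} = [x_i, y_j] = \mathrm{i}(x_iy_j - y_jx_i)$, viewed as elements of $M_{\mathrm{sa}}$ (up to the factor $\mathrm{i}$; one works with $\mathrm{i}[x_i,y_j]$ to stay self-adjoint, or absorbs the constant). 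By hypothesis $\|\xi_{ij}\|_2 \geq \delta$ for all $(i,j)$ and $|\langle \xi_{ij}, \xi_{i'j'}\rangle| \leq \gamma$ for $(i,j) \neq (i',j')$; with $p = mn$, $\delta$, and $\varepsilon \rightsquigarrow \gamma$, the hypothesis $2mn\varepsilon < \delta^2 - (mn-1)\gamma$ in particular gives $\gamma < \frac{\delta^2}{mn-1}$, so Lemma \ref{linearindep} applies.

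\textbf{Construction of $h$ and $u$.} Apply Lemma \ref{linearindep} to the family $(\xi_{ij})$ with prescribed values $\alpha_{i_0 j_0}$ chosen to cancel $\tau(x_{i_0}y_{j_0})$ to first order, and $\alpha_{ij} = 0$ for $(i,j) \neq (i_0,j_0)$. This produces $h = h^* \in M$ with $\|h\|_\infty \leq \frac{|\alpha_{i_0j_0}|}{\delta^2 - (mn-1)\gamma}$ and $\tau(h\xi_{ij}) = \alpha_{ij}$. Then set $u = \exp(\mathrm{i}t h)$ with $t$ chosen of size comparable to $\varepsilon$ (the scale $|\alpha_{i_0j_0}| \asymp \varepsilon$), so that $\|u - 1\|_\infty \leq t\|h\|_\infty \lesssim \lambda$. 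Then $u x_i u^* = x_i + \mathrm{i}t[h, x_i] + O(t^2)$, and one computes, using $\tau(h[x_i,y_j]) = \tau(h\xi_{ij}) = \alpha_{ij}$ (up to the factor $\mathrm{i}$) and the trace property,
$$\tau((ux_iu^*)y_j) = \tau(x_iy_j) + \mathrm{i}t\,\tau([h,x_i]y_j) + O(t^2) = \tau(x_iy_j) - \mathrm{i}t\,\tau(h[x_i,y_j]) + O(t^2).$$
For $(i,j) = (i_0,j_0)$, the choice of $\alpha_{i_0j_0}$ makes the linear term cancel $\tau(x_{i_0}y_{j_0})$ exactly, leaving only the quadratic remainder, whence $|\tau((ux_{i_0}u^*)y_{j_0})| = O(t^2) \lesssim \lambda^2$. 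For $(i,j) \neq (i_0,j_0)$, the linear term vanishes since $\alpha_{ij} = 0$, so again $|\tau((ux_iu^*)y_j) - \tau(x_iy_j)| = O(t^2)$, which one bounds by $4\lambda^2$ with the right bookkeeping. This gives $(\rm i)$ and $(\rm iii)$.

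\textbf{Stability of $\delta$ and $\gamma$.} For $(\rm ii)$ and $(\rm iv)$, the point is that conjugating by a unitary $2\lambda$-close to $1$ perturbs every quantity $\|[x_i,y_j]\|_2$ and $\langle[x_i,y_j],[x_{i'},y_{j'}]\rangle$ by $O(\lambda)$: indeed $\|(ux_iu^* - x_i)\|_2 \leq 2\|u-1\|_\infty \leq 4\lambda$, and $\|[ux_iu^*, y_j] - [x_i,y_j]\|_2 \leq 2\|ux_iu^* - x_i\|_2\|y_j\|_\infty \leq 8\lambda$ (using $\|y_j\|_\infty \leq 1$), so by the reverse triangle inequality $\delta(uxu^*,y) \geq \delta - 8\lambda$, and by the Cauchy--Schwarz-type expansion of the difference of inner products (two commutators each perturbed by $\leq 8\lambda$ in $\|\cdot\|_2$, both commutators bounded by $2$ in $\|\cdot\|_2$), $\gamma(uxu^*,y) \leq \gamma + 32\lambda$. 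These are elementary once the first-order expansion of $u\cdot u^*$ is in place.

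\textbf{Main obstacle.} The genuine difficulty is not any single estimate but rather keeping the quadratic (and higher) remainder terms in $u x_i u^* = \sum_{k\geq 0} \frac{(\mathrm{i}t)^k}{k!}\mathrm{ad}_h^k(x_i)$ under explicit control with the stated numerical constants ($13$, $8$, $2$, $4$, $32$): one must bound $\|u x_i u^* - x_i - \mathrm{i}t[h,x_i]\|$ by something like $\frac{t^2\|h\|_\infty^2}{2}e^{t\|h\|_\infty}\|x_i\|_\infty$ and then trace against $y_j$, translate $t\|h\|_\infty$ into $\lambda$, and verify the remainder is $\leq 4\lambda^2 - (\text{linear cancellation slack})$. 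Choosing $t$ and $\alpha_{i_0j_0}$ optimally (balancing the need for exact first-order cancellation against the growth of the exponential tail) is where the constant $\frac{2mn\varepsilon}{\delta^2-(mn-1)\gamma}$ and the smallness hypothesis $2mn\varepsilon < \delta^2 - (mn-1)\gamma$ come from; the rest is careful but routine bookkeeping.
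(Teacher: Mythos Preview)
Your approach has a genuine gap in establishing (iii). You propose to cancel only the \emph{single} inner product $\tau(x_{i_0}y_{j_0})$ realizing the maximum $\varepsilon$, setting $\alpha_{ij}=0$ for all other pairs. For those other pairs you correctly observe that the linear correction vanishes, giving $|\tau((ux_iu^*)y_j) - \tau(x_iy_j)| = O(\lambda^2)$. But this bounds only the \emph{difference}: you still have $|\tau((ux_iu^*)y_j)| \leq |\tau(x_iy_j)| + O(\lambda^2) \leq \varepsilon + O(\lambda^2)$, and $\varepsilon$ is in general of order $\lambda$ (indeed $\varepsilon = \lambda\cdot\frac{\delta^2-(mn-1)\gamma}{2mn}$), not $\lambda^2$. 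So (iii) fails with your construction. Your reading of ``moveone'' as ``handle one coordinate, to be iterated $mn$ times'' is not what the lemma asserts; the iteration in Lemma~\ref{perturbation} needs the full strength of (iii) at each step to force $\lambda_k$ to decay geometrically.

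The paper's fix is to cancel \emph{all} $mn$ inner products simultaneously to first order: apply Lemma~\ref{linearindep} to the family $\xi_{i,j}=-\frac{\mathrm{i}}{2}[x_i,y_j]$ with $\alpha_{i,j}=\frac{\tau(x_iy_j)}{2}$ for \emph{every} $(i,j)$, obtaining $h=h^*$ with
\[
\|h\|_\infty \leq \frac{\sum_{i,j}|\alpha_{i,j}|}{(\delta/2)^2-(mn-1)(\gamma/4)} \leq \frac{2mn\varepsilon}{\delta^2-(mn-1)\gamma}=\lambda
\]
(this is exactly where the factor $mn$ in the definition of $\lambda$ comes from). Then with $u=\exp(\mathrm{i}h)$ (no extra parameter $t$), the first-order term in $\tau(ux_iu^*y_j)$ exactly cancels $\tau(x_iy_j)$ for \emph{every} pair, leaving only the quadratic remainder $\leq 4\lambda^2$. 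Your treatment of (i), (ii), (iv) matches the paper's argument and is fine.
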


\begin{proof}
For every $1\leq i\leq m,1\leq j\leq n$, set $\xi_{i,j}=-\frac{{\rm i}}{2}[x_i,y_j]$. Then $\xi_{i,j}\in M_{\text{sa},1}$ and $\|\xi_{i,j}\|_2=\frac{\|[x_i,y_j]\|_2}{2}\geq \frac{\delta}{2}$, for every $1\leq i\leq m,1\leq j\leq n$. On the other hand, for every $(i,j)\not=(i',j')$, we have $|\langle\xi_{i,j},\xi_{i',j'}\rangle|=\frac{|\langle [x_i,y_j],[x_{i'},y_{j'}]\rangle|}{4}\leq\frac{\gamma}{4}$. %Thus, we get that  $\sum_{(i,j)\not=(i',j')}|\langle\xi_{i,j},\xi_{i',j'}\rangle|=\frac{\kappa}{4}$. 
%Denote$\varepsilon_0:=\frac{\varepsilon}{\delta-\frac{(mn-1)\kappa}{2}}$ and notice that the hypothesis implies that $\varepsilon_0<1$.
 
 By applying Lemma \ref{linearindep} to $\xi_{i,j}$ and $\alpha_{i,j}=\frac{\tau(x_iy_j)}{2}$, we may find $h\in M$ such that $h=h^*$, 
 \begin{equation}\label{h} 
 \forall 1\leq i\leq m,1\leq j\leq m, \quad \tau(h\xi_{i,j})=\frac{\tau(x_iy_j)}{2}, 
 \end{equation}
 and
 \begin{equation}\label{h2}
 \|h\|_\infty\leq \frac{\sum_{i,j}\frac{|\tau(x_iy_j)|}{2}}{\frac{\delta^2}{4}-(mn-1)\frac{\gamma}{4}}\leq\frac{2mn\varepsilon}{\delta^2 - (mn-1)\gamma}=\lambda.
\end{equation}

Define $u=\exp({\rm i} h)\in\mathscr U(M)$. We will prove that $u$ satisfies the conclusion. Since for every $x\in\mathbb R$, $|\exp({\rm i} x)-1|\leq 2|x|$ and $|\exp({\rm i} x)-(1+ {\rm i} x)|\leq x^2$,  using \eqref{h2} we get that
\begin{equation}\label{u}
\|u-1\|_\infty\leq 2\lambda \quad \text{and} \quad \|u-(1+ {\rm i} h)\|_\infty \leq \lambda^2.
\end{equation}

Let $1\leq i\leq m$ and $1\leq j\leq n$. Then using  \eqref{h2} and the second part of \eqref{u} we get that $\|ux_iu^*y_j-(1+ {\rm i} h)x_i(1+ {\rm i} h)^*y_j\|_\infty \leq \|u-(1+ {\rm i} h)\|_\infty (1+\|1+ {\rm i} h\|_\infty)\leq \lambda^2(2+\lambda)\leq 3\lambda^2$ and $\|(1+ {\rm i} h)x_i(1+ {\rm i} h)^*y_j-(x_iy_j+ {\rm i} (hx_iy_j-x_ihy_j))\|_\infty =\|hx_ihy_j\|_\infty \leq \lambda^2.$
Thus, we get 
$$\|ux_iu^*y_j-(x_iy_j+{\rm i} (hx_iy_j-x_ihy_j))\|_\infty \leq 4\lambda^2$$ 
and therefore
$|\tau(ux_iu^*y_j)-\tau(x_iy_j+ {\rm i} (hx_iy_j-x_ihy_j))|\leq 4\lambda^2$. On the other hand, \eqref{h} gives $\tau(x_iy_j+ {\rm i} (hx_iy_j-x_ihy_j))=\tau(x_iy_j)+\tau({\rm i} h[x_i,y_j])=\tau(x_iy_j)-2\tau(h\xi_{i,j})=0$. Altogether, we get that  $|\tau(ux_iu^*y_j)|\leq 4\lambda^2$. % for every $1\leq i\leq m$ and $1\leq j\leq n$.
Thus, $\varepsilon(uxu^*,y)\leq 4\lambda^2$, which proves $(\rm iii)$.

Next, $\|[ux_iu^*,y_j]-[x_i,y_j]\|_2\leq 2\|ux_iu^*-x_i\|_2\leq 4\|u-1\|_2\leq 8\lambda$, by the first part of \eqref{u}. Hence, $\|[ux_iu^*,y_j]\|_2\geq \|[x_i,y_j]\|_2-8\lambda\geq\delta-8\lambda$, for every $1\leq i\leq m$ and $1\leq j\leq n$.
This implies that $\delta(uxu^*,y)\geq\delta-8\lambda$, which proves $(\rm ii)$.

Finally, 
 for every $(i,j),(i',j')$ we have  $\|[ux_{i'}u^*,y_{j'}]\|_2\leq 2$, $\|[x_i,y_j]\|_2\leq 2$ and thus 
 \begin{align*}
 &|\langle [ux_iu^*,y_j],[ux_{i'}u^*,y_{j'}]\rangle-\langle [x_i,y_j],[x_{i'},y_{j'}]\rangle|\\
 & \quad \leq 2\big(\|[ux_iu^*,y_j]-[x_i,y_j]\|_2+\|[ux_{i'}u^*,y_{j'}]-[x_{i'},y_{j'}]\|_2\big)\leq 32\lambda. 
 \end{align*}
Thus, $|\langle [ux_iu^*,y_j],[ux_{i'}u^*,y_{j'}]\rangle|\leq |\langle [x_i,y_j],[x_{i'},y_{j'}]\rangle|+32\lambda\leq \gamma+32\lambda$. This implies that $\gamma(uxu^*,y)\leq\gamma+32\lambda$, which proves $(\rm iv)$. Since $(\rm i)$ also holds by the first part of \eqref{u}, this finishes the proof.
\end{proof}

\begin{proof}[Proof of Lemma \ref{perturbation}]
We will inductively construct sequences $(u_k)_{k\in\mathbb N}\subset\mathscr U(M)$ and $(\lambda_k)_{k\in\mathbb N}\subset (0,\infty)$ with the following properties: $\lambda_0=1$, $\lambda_1=\frac{2mn\varepsilon_0}{\delta_0^2-(mn-1)\gamma_0}$
and if we define
 $v_0=1$, $v_k=u_ku_{k-1}\cdots u_1\in\mathscr U(M)$, $\delta_k=\delta(v_kxv_k^*,y),\varepsilon_k=\varepsilon(v_kxv_k^*,y)$ and $\gamma_k=\gamma(v_kxv_k^*,y)$, for every $k\geq 0$, then for every $k\geq 1$ we have that 
\begin{enumerate}[(i)]
\item $\|u_k-1\|_\infty \leq 2\lambda_k$.
\item $\delta_k\geq \delta_{k-1}-8\lambda_k$.
\item $\varepsilon_k\leq 4\lambda_k^2$.
\item $\gamma_k\leq\gamma_{k-1}+32\lambda_k$.
\item $\lambda_k\leq\frac{\lambda_{k-1}}{2}.$
\end{enumerate}

Since $\varepsilon_0\leq 1$, we have
that $4mn\varepsilon_0\leq 13mn\sqrt{\varepsilon_0} < \delta_0^2 - (mn-1)\gamma_0$.  Thus, $\lambda_1<\frac{1}{2}$ and hence condition (v) holds for $k=1$.
By applying Lemma \ref{moveone}, we can find $u_1\in\mathscr U(M)$ such that conditions (i)-(iv) hold for $k=1$. 

Next, assume that we have constructed $u_1,\dots, u_l\in\mathscr U(M)$ and $\lambda_1,\dots,\lambda_l\in (0,\infty)$, for some $l\in\mathbb N$, such that conditions (i)-(v) are satisfied for $k=1,\dots,l$. 
Our goal is to construct $u_{l+1}$ and $\lambda_{l+1}$. 
Let $\lambda_{l+1}=\frac{2mn\varepsilon_l}{\delta_l^2-(mn-1)\gamma_l}$. We continue with the following claim.

\begin{claim}\label{lambda0}$\lambda_{l+1}\leq\frac{\lambda_l}{2}$. \end{claim}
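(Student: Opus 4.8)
The plan is to deduce the claim directly from the recursion, using that conditions (i)--(v) hold at all indices $k = 1,\dots,l$. By the definition of $\lambda_{l+1}$ and condition (iii) at $k=l$ (so $\varepsilon_l \le 4\lambda_l^2$), one has
\[
\lambda_{l+1} \;=\; \frac{2mn\varepsilon_l}{\delta_l^2-(mn-1)\gamma_l} \;\le\; \frac{8mn\lambda_l^2}{\delta_l^2-(mn-1)\gamma_l},
\]
so everything reduces to establishing the single inequality $16mn\lambda_l \le \delta_l^2-(mn-1)\gamma_l$: granting it, $\lambda_{l+1} \le \tfrac{\lambda_l}{2}\cdot\frac{16mn\lambda_l}{\delta_l^2-(mn-1)\gamma_l} \le \tfrac{\lambda_l}{2}$.

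To bound $\delta_l^2-(mn-1)\gamma_l$ from below I would first unwind condition (v) into the geometric decay $\lambda_k \le 2^{-(k-1)}\lambda_1$ for $1\le k\le l$, so that $\sum_{k=1}^l \lambda_k < 2\lambda_1$; telescoping conditions (ii) and (iv) over $k=1,\dots,l$ then yields the $l$-uniform estimates $\delta_l \ge \delta_0 - 16\lambda_1$ and $\gamma_l \le \gamma_0 + 64\lambda_1$. Setting $D_0 = \delta_0^2-(mn-1)\gamma_0$, the hypothesis $13mn\sqrt{\varepsilon_0} < D_0$ together with $D_0 \le \delta_0^2 \le 2\delta_0$ (recall $\delta_0 \le \|[x_1,y_1]\|_2 \le 2$) forces $\lambda_1 < \tfrac{2}{13}\sqrt{\varepsilon_0} < \tfrac{4}{169}\delta_0$, hence $\delta_0 - 16\lambda_1 > 0$ and one may square: $\delta_l^2 \ge \delta_0^2 - 32\lambda_1\delta_0 \ge \delta_0^2 - 64\lambda_1$. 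Combining gives
\[
\delta_l^2-(mn-1)\gamma_l \;\ge\; D_0 - 64\lambda_1 - 64(mn-1)\lambda_1 \;=\; D_0 - 64mn\lambda_1.
\]

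Finally I would substitute $\lambda_1 = \tfrac{2mn\varepsilon_0}{D_0}$ to rewrite the last quantity as $\tfrac{D_0^2 - 128m^2n^2\varepsilon_0}{D_0}$, and invoke the hypothesis in the squared form $D_0^2 > 169m^2n^2\varepsilon_0$ to conclude $\delta_l^2-(mn-1)\gamma_l > \tfrac{41m^2n^2\varepsilon_0}{D_0}$; since $16mn\lambda_l \le 16mn\lambda_1 = \tfrac{32m^2n^2\varepsilon_0}{D_0}$ and $32 < 41$, the inequality $16mn\lambda_l \le \delta_l^2-(mn-1)\gamma_l$ follows, completing the proof. I expect the only mildly delicate step to be the middle one: one must be careful that the geometric decay (v), which is itself part of the inductive hypothesis being propagated, genuinely yields bounds on $\delta_l$ and $\gamma_l$ with constants independent of $l$. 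After that the argument is a routine comparison of numerical constants, and the value $13$ in the hypothesis of Lemma~\ref{perturbation} is comfortably sufficient (it is calibrated so that in addition $\|v-1\|_\infty \le 4\lambda_1 \le \tfrac{8}{13}\sqrt{\varepsilon_0}$).
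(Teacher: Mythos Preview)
Your proof is correct and follows essentially the same route as the paper's: reduce to the lower bound $\delta_l^2-(mn-1)\gamma_l \ge 16mn\lambda_l$, telescope conditions (ii) and (iv) using the geometric decay from (v) to obtain $\delta_l^2-(mn-1)\gamma_l \ge D_0 - 64mn\lambda_1$, and then square the hypothesis $13mn\sqrt{\varepsilon_0} < D_0$ to see that the right-hand side exceeds $16mn\lambda_1 \ge 16mn\lambda_l$. The only cosmetic difference is that you telescope $\delta_k$ and then square (carefully checking $\delta_0-16\lambda_1>0$), whereas the paper squares at each step ($\delta_k^2 \ge \delta_{k-1}^2 - 32\lambda_k$, using $\delta_{k-1}\le 2$) and telescopes $\delta_k^2$ directly; both arrive at the same inequality $\delta_l^2-(mn-1)\gamma_l \ge D_0 - 64mn\lambda_1$.
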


\noindent
{\it Proof of Claim \ref{lambda0}.}
First, $(\rm ii)$ implies that $\delta_k^2 \geq (\delta_{k-1}-8\lambda_k)^2 \geq \delta_{k - 1}^2 - 32 \lambda_k$. Then combining (ii) and (iv) gives that 
$$\forall 1\leq k\leq l, \quad \delta_k^2-(mn-1)\gamma_k\geq (\delta_{k-1}^2-(mn-1)\gamma_{k - 1}) - 32mn\lambda_k$$ 
which implies that $\delta_l^2-(mn-1)\gamma_l\geq (\delta_0^2-(mn-1)\gamma_0)-  32mn(\sum_{k=1}^l\lambda_k).$ By using that (v) holds for $k=1,\dots,l$, we also get that $\sum_{k=1}^l\lambda_k\leq 2\lambda_1$. By combining the last two inequalities we get that 
\begin{equation}\label{lambda1}
\delta_l^2-(mn-1)\gamma_l\geq (\delta_0^2-(mn-1)\gamma_0) - 64mn\lambda_1.
\end{equation}
Since $13mn\sqrt{\varepsilon_0} < \delta_0^2 - (mn-1)\gamma_0$, we get that $(\delta_0^2-(mn-1)\gamma_0)^2 >  169(mn)^2 \varepsilon_0$ and thus
\begin{equation}\label{lambda2}
\delta_0^2-(mn-1)\gamma_0 > 80mn\lambda_1.
\end{equation}
By combining \eqref{lambda1} and \eqref{lambda2} we derive that
\begin{equation}\label{lambda3}
\delta_l^2-(mn-1)\gamma_l\geq 16mn\lambda_1.
\end{equation}
Since (v) holds for every $k=1,\dots,l$, we get that $\lambda_l\leq\lambda_1$. Since $\varepsilon_l\leq 4\lambda_l^2$ by (iii), using \eqref{lambda3} we get that
$$\lambda_{l+1}=\frac{2 mn\varepsilon_l}{\delta_l^2 -(mn-1)\gamma_l}\leq \frac{8mn\lambda_l^2}{\delta_l^2-(mn-1)\gamma_l}\leq\frac{16mn\lambda_1}{\delta_l^2-(mn-1)\gamma_l}\cdot\frac{\lambda_l}{2}\leq\frac{\lambda_l}{2}.$$
This finishes the proof of the claim.
\noindent\hfill$\square$

By using (v) and Claim \ref{lambda0} we get that $\lambda_{l+1}\leq\frac{1}{2^{l+1}}<1$. Thus, $2 mn\varepsilon_l < \delta_l^2 -(mn-1)\gamma_l$. We can therefore apply Lemma \ref{moveone} to $v_lxv_l^*$ and $y$ to find $u_{l+1}\in\mathscr U(M)$ such that
\begin{enumerate}[(i')]
\item $\|u_{l+1}-1\|_\infty\leq 2\lambda_{l+1}$.
\item $\delta_{l+1}=\delta(u_{l+1}(v_lxv_l^*)u_{l+1}^*),y)\geq \delta_{l}-8\lambda_{l+1}$.
\item $\varepsilon_{l+1}=\varepsilon(u_{l+1}(v_lxv_l^*)u_{l+1}^*,y)\leq 4\lambda_{l+1}^2$.
\item $\gamma_{l+1}=\gamma(u_{l+1}(v_lxv_l^*)u_{l+1}^*,y)\leq\gamma_l+32\lambda_{l+1}$.
\end{enumerate}
By induction, this finishes the construction of $(u_k)_{k\in\mathbb N}\subset\mathscr U(M)$ and $(\lambda_k)_{k\in\mathbb N}\subset (0,\infty)$.

Finally, since $\lambda_0=1$, (v) implies that $\lambda_k\leq \frac{1}{2^k}$, for every $k\geq 0$. Using (i), we derive that $\|v_k-v_{k-1}\|_\infty=\|u_k-1\|_\infty\leq\frac{1}{2^{k-1}}$, for every $k\geq 1$. Thus, the sequence $(v_k)_{k\in\mathbb N}$ is Cauchy in $\|\cdot\|_\infty$  and so we can find $v\in\mathscr U(M)$ such that $\lim_{k\rightarrow\infty}\|v_k-v\|_\infty=0$. 
Using (iii), we get that $\varepsilon_k\leq 4\lambda_k^2\leq \frac{1}{4^{k-1}}$, for every $k\geq 1$. Thus, $\varepsilon(vxv^*,y)=\lim_{k\rightarrow\infty}\varepsilon_k=0$.
Moreover, using (i) and (v) we get that $\|v_k-1\|_\infty \leq\sum_{l=1}^k\|u_l-1\|_\infty\leq\sum_{l=1}^k2\lambda_l\leq 4\lambda_1.$ Hence $\|v-1\|_\infty=\lim_{k\rightarrow\infty}\|v_k-1\|_\infty \leq 4\lambda_1= \frac{8mn\varepsilon_0}{\delta_0^2-(mn-1)\gamma_0}$. This finishes the proof.
\end{proof}

 \begin{proof}[Proof of Theorem \ref{lifting}] We may clearly assume that $\dim(A)\geq 2$ and $\dim(B)\geq 2$.
Since $A$ and $B$ are separable, we can write $A=(\bigcup_{n\in\mathbb N}A_n)\dpr$, $B=(\bigcup_{n\in\mathbb N}B_n)\dpr$, where $A_n\subset A, B_n\subset B$ are finite dimensional von Neumann subalgebras such that  $A_n\subset A_{n+1}$,  $B_n\subset B_{n+1}$, $a_n:=\dim(A_n)\geq 2$ and $b_n:=\dim(B_n)\geq 2$, for every $n\in\mathbb N$. 

Fix $n\in\mathbb N$. Write $A_n=\bigoplus_{i=1}^{a_n}\mathbb Cp_{n,i}$ and $B_n=\bigoplus_{j=1}^{b_n}\mathbb Cq_{n,j}$, where $(p_{n,i})_{i=1}^{a_n}$ and $(q_{n,j})_{j=1}^{b_n}$ are partitions of unity into projections from $A$ and $B$, respectively. For every $1\leq i\leq a_n$ and $1\leq j\leq b_n$, represent $p_{n,i},q_{n,j}\in\prod_{\mathcal U}M_k$ as $p_{n,i}=(p_{n,i}^k)^{\mathcal U}$ and $q_{n,j}=(q_{n,j}^k)^{\mathcal U}$, where for every $k\in K$, $(p_{n,i}^k)_{i=1}^{a_n}$ and $(q_{n,j}^k)_{j=1}^{b_n}$ are partitions of unity into projections from $M_k$. Denote $A_n^k=\bigoplus_{i=1}^{a_n}\mathbb Cp_{n,i}^k$ and $B_n^k=\bigoplus_{j=1}^{b_n}\mathbb Cq_{n,j}^k$.
Moreover, we can arrange that $A_n^k\subset A_{n+1}^k$ and $B_n^k\subset B_{n+1}^k$, for every $n\in\mathbb N$ and $k\in K$.

If $(r_l)_{l=1}^m$ is a partition of unity into nonzero projections from a tracial von Neumann algebra $(N,\tau)$, then $\left\{\tau(r_{l+1}+\cdots+r_m)r_l-\tau(r_l)(r_{l+1}+\cdots+r_m)\mid 1\leq l\leq m-1 \right\}$ is an orthogonal basis for $C\ominus\mathbb C1$ contained in $C_{\text{sa},1}$, where $C=\bigoplus_{l=1}^m\mathbb Cr_l$.
Using this observation,  for every $1\leq i\leq a_n-1,1\leq j\leq b_n-1$ and $k\in K$, we define $$x_{n,i}=\tau(p_{n,i+1}+\cdots+p_{n,a_n})p_{n,i}-\tau(p_{n,i})(p_{n,i+1}+\cdots+p_{n,a_n}),$$ $$y_{n,j}=\tau(q_{n,j+1}+\cdots+q_{n,b_n})q_{n,j}-\tau(q_{n,j})(q_{n,j+1}+\cdots+q_{n,b_n}),$$ 
$$x_{n,i}^k=\tau(p_{n,i+1}^k+\cdots+p_{n,a_n}^k)p_{n,i}^k-\tau(p_{n,i}^k)(p_{n,i+1}^k+\cdots+p_{n,a_n}^k),$$ $$y_{n,j}^k=\tau(q_{n,j+1}^k+\cdots+q_{n,b_n}^k)q_{n,j}^k-\tau(q_{n,j}^k)(q_{n,j+1}^k+\cdots+q_{n,b_n}^k).$$ 

Set $x_n=(x_{n,i})_{i=1}^{a_n-1}\in A_n^{a_n-1}, y_n=(y_{n,j})_{j=1}^{b_n-1}\in B_n^{b_n-1}, x_n^k=(x_{n,i}^k)_{i=1}^{a_n-1}\in M_k^{a_n-1}$ and  $y_n^k=(y_{n,j}^k)_{j=1}^{b_n-1}\in M_k^{b_n-1}$.
Let $n\in\mathbb N$, $1\leq i,i'\leq a_n-1$ and $1\leq j,j'\leq b_n-1$ with $(i,j)\not=(i',j')$. Since $A_n$ and $B_n$ are $2$-independent, $x_{n,i}\not=0$ and $y_{n,j}\not=0$, we have that $\|[x_{n,i},y_{n,j}]\|_2= \sqrt{2}\|x_{n,i}\|_2\|y_{n,j}\|_2>0$ and $\tau(x_{n,i}y_{j,n})=0$. Moreover,   $\langle [x_{n,i},y_{n,j}],[x_{n,i'},y_{n,j'}]\rangle=2\tau(x_{n,i}x_{n,i'})\tau(y_{n,j}y_{n,j'}).$ 
Since $(x_{n,i})_{i=1}^{a_n-1}$ and $(y_{n,j})_{j=1}^{b_n-1}$ are pairwise orthogonal, we get that $\langle [x_{n,i},y_{n,j}],[x_{n,i'},y_{n,j'}]\rangle=0$.
Altogether, we derive that $\delta(x_n,y_n)>0$ and $\varepsilon(x_n,y_n)=\gamma(x_n,y_n)=0$. 

Thus, we get that $\lim_{k\rightarrow\mathcal U}\delta(x_n^k,y_n^k)=\delta(x_n,y_n)>0$, $\lim_{k\rightarrow\mathcal U}\varepsilon(x_n^k,y_n^k)=\varepsilon(x_n,y_n)=0$ and $\lim_{k\rightarrow\mathcal U}\gamma(x_n^k,y_n^k)=\gamma(x_n,y_n)=0$. By applying Lemma \ref{perturbation}, we find $v_n^k\in\mathscr U(M_k)$, for every $k\in K$, such that $\varepsilon(v_n^kx_n^k{v_n^k}^*,y_n^k)=0$, for every $k\in K$, and $\lim_{k\rightarrow\mathcal U}\|v_n^k-1\|_\infty=~0$. Since $x_n^k$ and $y_n^k$ are bases for $A_n^k$ and $B_n^k$, respectively, we get that $v_n^kA_n^k{v_n^k}^*$ and $B_n^k$ are orthogonal, for every $k\in K$.

To complete the proof we consider two cases:

\vskip 0.1in
{\bf Case 1.} $\mathcal U$ is countably cofinal. 

In this case, we proceed as in the proof of \cite[Lemma 2.2]{BCI15}. Since $\mathcal U$ is countably cofinal, there exists a decreasing sequence $\{S_n\}_{n\geq 2}$ of sets in $\mathcal U$ such that $\bigcap_{n\geq 2}S_n=\emptyset$. 
For $n\geq 2$, let $T_n=\{k\in K\mid \|v_m^k-1\|_\infty<\frac{1}{n}, \forall 1\leq m\leq n\}\in\mathcal U$ and set $K_n=S_n\cap T_n$. Then $\{K_n\}_{n\geq 2}$ is a decreasing sequence of sets in $\mathcal U$ such that $\bigcap_{n\geq 2}K_n=\emptyset$. Let $K_1=K\setminus K_2$. For every $k\in K$, let $n(k)$ be the smallest integer $n\geq 1$ such that $k\in K_n$. Then $n(k)$ is well-defined and $\lim_{k\rightarrow\mathcal U}n(k)=+\infty$. 

For $k\in K$, let $C_k=A_{n(k)}^k, D_k^0=B_{n(k)}^k$ and $v_k=v_{n(k)}^k$. If $n(k)\geq 2$, then as $k\in K_{n(k)}$ we have $\|v_k-1\|_\infty < \frac{1}{n(k)}$.  Since $\lim_{k\rightarrow\mathcal U}n(k)=+\infty$, we get that $\lim_{k\rightarrow\mathcal U}\|v_k-1\|_\infty =0$. 

Let $n\in\mathbb N$. Since $\{k\in K\mid n(k)\geq n\}\in\mathcal U$ and the sequences  $\{A_m^k\}_{m\in\mathbb N}$ and $\{B_m^k\}_{m\in\mathbb N}$ are increasing for every $k\in K$, we get that $\prod_{\mathcal U}A_n^k\subset\prod_{\mathcal U}C_k$ and $\prod_{\mathcal U}B_n^k\subset\prod_{\mathcal U}D_k^0$. Since $A_n\subset\prod_{\mathcal U}A_n^k$ and $B_n\subset\prod_{\mathcal U}B_n^k$, we conclude that $A_n\subset\prod_{\mathcal U}C_k$ and $B_n\subset\prod_{\mathcal U}D_k^0$. As this holds for every $n\in\mathbb N$, we get that $A\subset\prod_{\mathcal U}C_k$ and $B\subset\prod_{\mathcal U}D_k^0$. Finally, let $D_k=v_kD_k^0v_k^*$. Then $C_k=A_{n(k)}^k$ and $D_k=v_{n(k)}^kB_{n(k)}^kv_{n(k)}^*$ are orthogonal, for every $k\in K$. Since $\lim_{k\rightarrow\mathcal U}\|v_k-1\|_\infty=0$, we get that $\prod_{\mathcal U}D_k^0=\prod_{\mathcal U}D_k$ and $B\subset\prod_{\mathcal U}D_k$.
This finishes the proof of Case 1.
\vskip 0.1in
{\bf Case 2.} $\mathcal U$ is not countably cofinal.

Since $\mathcal U$ is not countably cofinal, $\{k'\in K\mid f(k')=\lim_{k\rightarrow\mathcal U}f(k)\}\in\mathcal U$, for every $f\in\ell^\infty(K)$ (see the proof of \cite[Lemma 2.3 (2)]{BCI15}). If $n\in\mathbb N$, since $\lim_{k\rightarrow\mathcal U}\|v_n^k-1\|_\infty=0$, we get that $R_n:=\{k\in K\mid v_n^k=1\}\in\mathcal U$. Using again that $\mathcal U$ is not countably cofinal, we further deduce that $R:=\bigcap_{n\in\mathbb N}R_n=\{k\in K\mid v_n^k=1,\forall n\in\mathbb N\}\in\mathcal U$. 

If $k\in R$, then $v_n^k=1$, hence $A_n^k$ and $B_n^k$ are orthogonal, for every $n\in\mathbb N$. Since the sequences  $\{A_n^k\}_{n\in\mathbb N}$ and $\{B_n^k\}_{n\in\mathbb N}$ are increasing, we get that $C_k=(\bigcup_{n\in\mathbb N}A_n^k)\dpr$ and $D_k=(\bigcup_{n\in\mathbb N}B_n^k)\dpr$ are orthogonal, for every $k\in R$. For $k\in K\setminus R$, let $C_k=D_k=\mathbb C1$. If $n\in\mathbb N$, then $A_n\subset\prod_{\mathcal U}A_n^k\subset\prod_{\mathcal U}C_k$ and $B_n\subset\prod_{\mathcal U}B_n^k\subset\prod_{\mathcal U}D_k$. As this holds for every $n\in\mathbb N$, we get that $A\subset\prod_{\mathcal U}C_k$ and $B\subset\prod_{\mathcal U}D_k$. This finishes the proof of Case 2 and of the theorem. 
\end{proof}

\subsection{Proof of Theorem \ref{inductive}} In order to construct a ${\rm II_1}$ factor satisfying the hypothesis of Theorem \ref{inductive}, we follow closely the construction from \cite[Definition 5.1]{CIKE22}.
This construction uses the following key result from \cite{CIKE22}.

\begin{cor}[Corollary 4.3 in \cite{CIKE22}]\label{amalgam}
Let $(M,\tau)$ be a tracial von Neumann algebra having no type I direct summand. Let $u_1,u_2\in\mathscr U(M)$ such that $\{u_1\}\dpr \perp\{u_2\}\dpr$. % and put $P=\Phi(M,u_1,u_2)$.

 Then there exists a ${\rm II_1}$ factor $P=\Phi(M,u_1,u_2)\dpr$ generated by a copy of $M$
and Haar unitaries $v_1,v_2\in \mathscr U(P)$ so that $[u_1,v_1]=[u_2,v_2]=[v_1,v_2]=0$. Moreover, if $Q\subset M$ is a von Neumann subalgebra such that $Q\npreceq_{M}\{u_i\}\dpr$, for every $1\leq i\leq 2$, then $Q'\cap P\subset M$. 

\end{cor}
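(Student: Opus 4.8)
The plan is to realize $\Phi(M,u_1,u_2)\dpr$ as a two-step iterated amalgamated free product, and then to read off the commutation relations and the relative commutant property from its structure using \cite[Theorem 1.1]{IPP05}. Write $A_i=\{u_i\}\dpr$. The hypothesis $\{u_1\}\dpr\perp\{u_2\}\dpr$ means exactly that $\rE_{A_1}|_{A_2}=\tau$, hence $A_2\ominus\C1\subseteq M\ominus A_1$ (and symmetrically), and in particular $A_1\cap A_2=\C1$. First I would set $P_1:=M\ast_{A_1}(A_1\ovt\rL(\Z))$ and let $v_1:=1\otimes z$, where $z$ is the canonical Haar unitary of $\rL(\Z)$; then $v_1$ is a Haar unitary of $P_1$ with $[u_1,v_1]=0$, and $\{v_1\}\dpr\ominus\C1\subseteq(A_1\ovt\rL(\Z))\ominus A_1$. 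Put $N:=A_2\vee\{v_1\}\dpr\subseteq P_1$. Since $A_2\ominus\C1$ and $\{v_1\}\dpr\ominus\C1$ lie on opposite sides of the amalgamated free product $P_1$, every alternating word in these two subspaces is already a reduced word of $P_1$; this shows $N=A_2\ast\{v_1\}\dpr$ and, crucially, that $\rL^2(N)\ominus\rL^2(A_2)\perp\rL^2(M)$ in $\rL^2(P_1)$, i.e.\ $\rE_N|_M=\rE_{A_2}$ (so in particular $N\cap M=A_2$). Finally I would define $P:=\Phi(M,u_1,u_2)\dpr:=P_1\ast_N(N\ovt\rL(\Z))$ and $v_2:=1\otimes z'$; then $v_2$ is a Haar unitary of $P$ which is central in $N\ovt\rL(\Z)$, so $[u_2,v_2]=[v_1,v_2]=0$ (as $u_2,v_1\in N$), while $v_1$ remains a Haar unitary of $P$ since the trace restricts correctly. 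Thus $P$ is generated by the copy $M\subseteq P_1\subseteq P$ together with the Haar unitaries $v_1,v_2$, and the three commutation relations hold.

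For the ``moreover'' part, fix $Q\subseteq M$ with $Q\npreceq_MA_1$ and $Q\npreceq_MA_2$. The plan is to apply \cite[Theorem 1.1]{IPP05} twice. Since $Q\npreceq_MA_1$, applied to $P_1=M\ast_{A_1}(A_1\ovt\rL(\Z))$ it yields $Q'\cap P_1\subseteq M$. Then, \emph{provided} one knows that $Q\npreceq_{P_1}N$, applied to $P=P_1\ast_N(N\ovt\rL(\Z))$ it yields $Q'\cap P\subseteq P_1$; combining, $Q'\cap P\subseteq Q'\cap P_1\subseteq M$, which is the conclusion. The factoriality of $P$ follows from the same circle of ideas: since $M$ has no type I direct summand, $M\npreceq_MA_i$ for $i=1,2$ (otherwise a nonzero direct summand of $M$ would embed trace-preservingly into the finite type I algebra $\mathbf M_n(A_i)$, hence be of type I), so the case $Q=M$ gives $\mathscr Z(P)\subseteq M'\cap P\subseteq M$; two further applications of \cite[Theorem 1.1]{IPP05} — one inside $P_1$, one inside $P$ — identify $M\cap\{v_1\}'\cap P=A_1$ and $M\cap\{v_2\}'\cap P=A_2$, whence $\mathscr Z(P)\subseteq A_1\cap A_2=\C1$; as $P$ is tracial and contains a copy of $\rL(\Z)$, it is a ${\rm II_1}$ factor.

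The step I expect to be the main obstacle is the claim $Q\npreceq_{P_1}N$: here \cite[Theorem 1.1]{IPP05} does not apply directly, because $N$ straddles the amalgamated free product decomposition of $P_1$. The plan is to argue by contradiction. Assuming $Q\preceq_{P_1}N$, take a $\|\cdot\|_2$-closed $Q$-$N$-subbimodule of $\rL^2(P_1)$ which is finitely generated as a right $N$-module and witnesses this intertwining. Using that the inclusion $M\subseteq P_1$ is mixing relative to $A_1$ — the standard feature of the amalgamated free product structure that $\rL^2(P_1)\ominus\rL^2(M)$, as an $M$-$M$-bimodule, is a multiple of $\rL^2(M)\otimes_{A_1}\mathcal H\otimes_{A_1}\rL^2(M)$ for some $A_1$-$A_1$-bimodule $\mathcal H$ (cf.\ \cite{CH08}) — one first absorbs the ``$\{v_1\}\dpr$-direction'' to see that the subbimodule must already have nonzero projection onto $\rL^2(M)$; and then the identity $\rE_N|_M=\rE_{A_2}$ together with $N\cap M=A_2$ identifies that projection as a right-$A_2$-finite $Q$-$A_2$-subbimodule of $\overline{M}^{\|\cdot\|_2}$, i.e.\ as an intertwiner proving $Q\preceq_MA_2$ — contradicting the hypothesis. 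Making this reduction precise (in particular, the absorption of $\{v_1\}\dpr$ and the passage from $N$ down to $A_2$) is the amalgamated-free-product analogue of the descent arguments used e.g.\ in \cite{IPP05,Io12}, and is where the genuine work of the proof lies; everything else is bookkeeping around \cite[Theorem 1.1]{IPP05}.
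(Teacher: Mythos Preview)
The paper does not prove this statement at all: it is quoted verbatim as \cite[Corollary 4.3]{CIKE22} and used as a black box in the construction of Definition \ref{M}, so there is no ``paper's own proof'' to compare against. Your two-step amalgamated free product construction $P_1=M\ast_{A_1}(A_1\ovt\rL(\Z))$ followed by $P=P_1\ast_N(N\ovt\rL(\Z))$, together with the verification that $N=A_2\ast\{v_1\}\dpr$ and $\rE_N|_M=\rE_{A_2}$, is exactly the construction carried out in \cite{CIKE22}, and your reduction of the ``moreover'' part to two applications of \cite[Theorem 1.1]{IPP05} plus the intertwining step $Q\npreceq_{P_1}N$ is likewise the correct skeleton.

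The one place where your outline is genuinely incomplete is, as you yourself flag, the implication $Q\preceq_{P_1}N\Rightarrow Q\preceq_MA_2$. Your bimodule sketch is plausible but vague; a cleaner route (closer to what \cite{CIKE22} actually does) is to exploit the free product structure of $N=A_2\ast\{v_1\}\dpr$ together with \cite[Theorem 1.1]{IPP05} once more: if $Q\preceq_{P_1}N$ then, since $Q\subset M\subset P_1$ and $Q\npreceq_M A_1$, one first controls the intertwiner to live over $M$, and then the commuting square $\rE_N|_M=\rE_{A_2}$ forces $Q\preceq_M A_2$. Your factoriality argument is correct as written (and the non-circularity is fine since you prove the ``moreover'' first and then specialize to $Q=M$).
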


For a ${\rm II_1}$ factor $M$, we let $\mathscr W(M)$ be the set of pairs $(u_1,u_2)\in\mathscr U(M)\times\mathscr U(M)$ such that  $\{u_1\}\dpr$ and $\{u_2\}\dpr$ are orthogonal. We endow $\mathscr U(M)\times\mathscr U(M)$ with the product $\|\cdot\|_2$-topology. 
We next repeat the construction from \cite[Definition 5.1]{CIKE22} where we replace $\mathscr V(M)$ (the set of pairs $(u_1,u_2)\in\mathscr W(M)$ such that  $u_1^2=u_2^3=1$) with $\mathscr W(M)$.

\begin{df}\label{M}Let $M_1$ be a ${\rm II_1}$ factor. 
We construct a ${\rm II_1}$ factor $M$ which contains $M_1$ and arises as the inductive limit of 
 an increasing sequence $(M_n)_{n\in\mathbb N}$ of ${\rm II_1}$ factors. % satisfying $M_n\subset M_{n+1}$, for every $n\in\mathbb N$. 
 To this end, let $\sigma=(\sigma_1,\sigma_2):\mathbb N\rightarrow\mathbb N\times\mathbb N$ be a bijection such that $\sigma_1(n)\leq n$, for every $n\in\mathbb N$. 
 Assume that $M_1,\ldots,M_n$ have been constructed, for some $n\in\mathbb N$.
Let $\{(u_1^{n,k},u_2^{n,k})\}_{k\in\mathbb N}\subset\mathscr W(M_n)$ be a $\|\cdot\|_2$-dense sequence.
Since $\sigma_1(n)\leq n$, we have $(u_1^{\sigma(n)},u_2^{\sigma(n)})\in\mathscr W(M_n)$ and we can define $M_{n+1}:=\Phi(M_n,u_1^{\sigma(n)},u_2^{\sigma(n)}).$
Then $M_n\subset M_{n+1}$ and 
$M_{n+1}$ is a ${\rm II_1}$ factor by Corollary \ref{amalgam}. Thus, $M:=({\bigcup_{n\in\mathbb N}M_n})\dpr$ a ${\rm II_1}$ factor.
\end{df}

\begin{prop}\label{2unitaries} Let $M$ be the ${\rm II_1}$ factor introduced in Definition \ref{M} and $\mathcal U$ be a countably cofinal ultrafilter on a set $I$. Let $u_1,u_2\in\mathscr U(M^{\mathcal U})$ such that $\{u_1\}\dpr$ and $\{u_2\}\dpr$ are $2$-independent.

Then there exist Haar unitaries $v_1,v_2\in M^{\mathcal U}$ so that $[u_1,v_1]=[u_2,v_2]=[v_1,v_2]=0$.
\end{prop}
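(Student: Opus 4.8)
The plan is to combine the lifting theorem (Theorem~\ref{lifting}), the iterative structure of $M$ from Definition~\ref{M}, and a diagonal argument exploiting that $\mathcal U$ is countably cofinal. First I would apply Theorem~\ref{lifting} with index set $I$ and $M_k=M$ for every $k\in I$ to the separable abelian von Neumann subalgebras $A=\{u_1\}\dpr$ and $B=\{u_2\}\dpr$, which are $2$-independent by hypothesis. This yields orthogonal abelian von Neumann subalgebras $C_i,D_i\subset M$, $i\in I$, with $\{u_1\}\dpr\subset\prod_{\mathcal U}C_i$ and $\{u_2\}\dpr\subset\prod_{\mathcal U}D_i$. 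Lifting unitaries to unitaries in a tracial ultraproduct, I write $u_1=(u_{1,i})^{\mathcal U}$ and $u_2=(u_{2,i})^{\mathcal U}$ with $u_{1,i}\in\mathscr U(C_i)$ and $u_{2,i}\in\mathscr U(D_i)$; then $\{u_{1,i}\}\dpr\subset C_i$ and $\{u_{2,i}\}\dpr\subset D_i$ are orthogonal, i.e.\ $(u_{1,i},u_{2,i})\in\mathscr W(M)$ for every $i\in I$.

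Next I would show that every pair in $\mathscr W(M)$ is $\|\cdot\|_2$-approximable by pairs lying in $\mathscr W(M_n)$ for finite $n$ (the union $\bigcup_n\mathscr W(M_n)$ being increasing since each $M_n\subset M_{n+1}$ is trace-preserving). Given $(a,b)\in\mathscr W(M)$ and $\varepsilon>0$, approximate $a$ (resp.\ $b$) in $\|\cdot\|_2$ by a unitary in a finite-dimensional subalgebra of $\{a\}\dpr$ (resp.\ $\{b\}\dpr$); these two finite-dimensional algebras are orthogonal, and approximately embed into some $M_n$, where their images are only \emph{almost} orthogonal. Applying Lemma~\ref{perturbation} to bases $x$ of the first image and $y$ of the second — which satisfy $\varepsilon(x,y)$ and $\gamma(x,y)$ small while $\delta(x,y)$ is bounded below, using the identities $\|[x_i,y_j]\|_2=\sqrt2\,\|x_i\|_2\|y_j\|_2$ and $\langle[x_i,y_j],[x_{i'},y_{j'}]\rangle=2\tau(x_ix_{i'})\tau(y_jy_{j'})$ valid in the exactly orthogonal model, perturbed slightly — I obtain $v\in\mathscr U(M_n)$ close to $1$ conjugating the first image to something exactly orthogonal to the second, hence a pair $(a',b')\in\mathscr W(M_n)$ with $\|a-a'\|_2,\|b-b'\|_2<\varepsilon$. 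Combining this with the $\|\cdot\|_2$-density of $\{(u_1^{n,k},u_2^{n,k})\}_k$ in $\mathscr W(M_n)$ and with Corollary~\ref{amalgam} applied at the construction stage $p$ with $\sigma(p)=(n,k)$ — legitimate since $\sigma_1(p)=n\le p$, so $(u_1^{n,k},u_2^{n,k})\in\mathscr W(M_p)$ and $M_{p+1}=\Phi(M_p,u_1^{\sigma(p)},u_2^{\sigma(p)})$ contains commuting Haar unitaries for this pair — I conclude: for every $i\in I$ and $m\in\mathbb N$ there exist Haar unitaries $v_{1,i,m},v_{2,i,m}\in M$ with $[v_{1,i,m},v_{2,i,m}]=0$ and $\|[u_{1,i},v_{1,i,m}]\|_2,\|[u_{2,i},v_{2,i,m}]\|_2\le\tfrac1m$.

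Finally I would diagonalize. Since $\mathcal U$ is countably cofinal, fix a decreasing sequence $(S_n)_{n\ge1}$ in $\mathcal U$ with $\bigcap_nS_n=\emptyset$, and for $i\in I$ set $m(i)=\max\{n:i\in S_n\}$, which is finite since $\bigcap_nS_n=\emptyset$ and satisfies $\{i:m(i)\ge n\}=S_n\in\mathcal U$, so $m(i)\to+\infty$ along $\mathcal U$. Put $v_1=(v_{1,i,m(i)})^{\mathcal U}$ and $v_2=(v_{2,i,m(i)})^{\mathcal U}$ in $M^{\mathcal U}$. Then $[v_1,v_2]=0$ termwise, $v_1$ and $v_2$ are Haar unitaries as ultraproducts of Haar unitaries, and $\|[u_1,v_1]\|_2=\lim_{i\to\mathcal U}\|[u_{1,i},v_{1,i,m(i)}]\|_2\le\lim_{i\to\mathcal U}\tfrac1{m(i)}=0$, and likewise $[u_2,v_2]=0$, which gives the conclusion.

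The main obstacle is the second step: passing from a general pair in $\mathscr W(M)$ — where the generated abelian algebras may be diffuse — down to pairs at finite stages while preserving \emph{exact} orthogonality. (In \cite{CIKE22} this was automatic because the relevant algebras had dimensions $2$ and $3$, so that no perturbation was needed.) This is exactly where Lemma~\ref{perturbation} is used, and the delicate point is checking its hypotheses for the finite-dimensional approximants, i.e.\ that $\varepsilon$ and $\gamma$ can be made arbitrarily small while $\delta$ stays bounded away from $0$.
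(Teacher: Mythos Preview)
Your approach has a genuine gap in step~2. The identities you invoke, $\|[x_i,y_j]\|_2=\sqrt2\,\|x_i\|_2\|y_j\|_2$ and $\langle[x_i,y_j],[x_{i'},y_{j'}]\rangle=2\tau(x_ix_{i'})\tau(y_jy_{j'})$, do \emph{not} hold for merely orthogonal abelian subalgebras: expanding $\|[x_i,y_j]\|_2^2=2\tau(x_i^2y_j^2)-2\tau(x_iy_jx_iy_j)$, orthogonality forces $\tau(x_i^2y_j^2)=\|x_i\|_2^2\|y_j\|_2^2$ but says nothing about the alternating word $\tau(x_iy_jx_iy_j)$, which vanishes only under $2$-independence. (Concretely, orthogonal trace-zero elements can commute, giving $\delta=0$; and the cross terms in $\gamma$ involve the same uncontrolled alternating traces.) Since Theorem~\ref{lifting} produces only \emph{orthogonal} lifts $C_i,D_i$, once you pass to $(u_{1,i},u_{2,i})\in\mathscr W(M)$ you have lost $2$-independence and cannot verify the hypothesis of Lemma~\ref{perturbation} for the approximants in $M_n$. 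Your ``main obstacle'' is thus real, and your proposed resolution does not work.

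The paper sidesteps this entirely by reversing the order of your first two steps. Rather than applying Theorem~\ref{lifting} with $M_k=M$ and then trying to descend to the finite stages, the paper first uses \cite[Lemma~2.2]{BCI15} (this is where countable cofinality enters) to find indices $(n_i)_{i\in I}\subset\mathbb N$ with $u_1,u_2\in\prod_{\mathcal U}M_{n_i}$, together with a function $f:I\to\mathbb N$ with $\lim_{i\to\mathcal U}f(i)=+\infty$. Since $\{u_1\}\dpr,\{u_2\}\dpr$ remain $2$-independent inside $\prod_{\mathcal U}M_{n_i}$, one \emph{then} applies Theorem~\ref{lifting} with the varying algebras $M_{n_i}$, obtaining orthogonal $C_i,D_i\subset M_{n_i}$ and hence lifts $(u_{1,i},u_{2,i})$ lying directly in $\mathscr W(M_{n_i})$. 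Now the built-in dense sequences $\{(u_1^{n_i,k},u_2^{n_i,k})\}_k\subset\mathscr W(M_{n_i})$ from Definition~\ref{M} apply immediately, and the diagonal argument (your step~3) finishes the proof. Your problematic step~2 simply disappears.
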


Proposition \ref{2unitaries} follows by repeating the argument used in the proof of \cite[Proposition 5.3]{CIKE22}, which we recall for the reader's convenience.
\begin{proof} 
Since $M=(\bigcup_{n\in\mathbb N}M_n)\dpr$ and $\mathcal U$ is countably cofinal, by applying \cite[Lemma 2.2]{BCI15} we can find ${(n_i)}_{i\in I}\subset \mathbb N$ such that $u_1,u_2\in\prod_{i\in\mathcal U}M_{n_i}$. Also, the proof of \cite[Lemma 2.2]{BCI15} provides a function $f:I\rightarrow\mathbb N$ such that $\lim_{i\rightarrow\mathcal U}f(i)=+\infty$.

Since $\{u_1\}\dpr$ and $\{u_2\}\dpr$ are $2$-independent, Theorem \ref{lifting} provides orthogonal von Neumann subalgebras $C_i,D_i\subset M_{n_i}$, for every $i\in I$, such that $u_1\in\prod_{\mathcal U}C_i$ and $u_2\in\prod_{\mathcal U}D_i$. Thus, we can represent $u_1=(u_{1,i})^{\mathcal U}$ and $u_2=(u_{2,i})^{\mathcal U}$, where $u_{1,i}\in \mathscr U(C_i)$ and $u_{2,i}\in \mathscr U(D_i)$, for every $i\in I$. In particular, $\{u_{1,i}\}\dpr$ and $\{u_{2,i}\}\dpr$ are orthogonal, and thus $(u_{1,i},u_{2,i})\in\mathscr W(M_{n_i})$, for every $i\in I$.

As the sequence $\{(u_1^{n_i,j},u_2^{n_i,j})\}_{j\in\mathbb N}$ is dense in $\mathscr W(M_{n_i})$, we can find $j_i\in\mathbb N$ such that
$\|u_{1,i}-u_1^{n_i,j_i}\|_2+\|u_{2,i}-u_2^{n_i,j_i}\|_2\leq\frac{1}{f(i)}$, for every $i\in I$. For $i\in I$, let $l_i\in\mathbb N$ with $\sigma(l_i)=(n_i,j_i)$. Then  $M_{\sigma(l_i)+1}=\Phi(M_{\sigma(l_i)},u_1^{n_i,j_i},u_2^{n_i,j_i})$. Corollary \ref{amalgam} gives Haar unitaries $v_{1,i},v_{2,i}\in\mathscr U(M_{\sigma(l_i)+1})\subset\mathscr U(M)$ with $[u_1^{n_i,j_i},v_{1,i}]=[u_2^{n_i,j_i},v_{2,i}]=[v_{1,i},v_{2,i}]=0$.
Using that $\lim_{i\rightarrow\mathcal U}f(i)=+\infty$, we conclude that $v_1=(v_{1,i})^{\mathcal U}, v_2=(v_{2,i})^{\mathcal U} \in\mathscr U(M^{\mathcal U})$ are Haar unitaries such that $[u_1,v_1]=[u_2,v_2]=[v_1,v_2]=0$.
\end{proof}

To ensure that $M$ does not have property Gamma, it suffices to take $M_1$ to have property (T), as the next result from \cite{CIKE22} shows:

\begin{prop}[Proposition 5.4 in \cite{CIKE22}]\label{propT}
Assume that $M_1$ has property {\em (T)}. Then $M$ does not have property Gamma.
\end{prop}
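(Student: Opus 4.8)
The plan is to show that $M_1 \subset M$ is an \emph{irreducible} subfactor and then run the standard spectral gap argument based on property (T) of $M_1$; this follows the strategy of \cite[Proposition 5.4]{CIKE22}.

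First I would prove that $M_1' \cap M = \C 1$. Fix $n \geq 1$. Since $\{u_i^{\sigma(n)}\}\dpr$ is abelian for $i = 1, 2$ while $M_1$ is a ${\rm II_1}$ factor with property (T), we have $M_1 \npreceq_{M_n} \{u_i^{\sigma(n)}\}\dpr$ (a ${\rm II_1}$ factor with property (T) cannot be intertwined into an amenable subalgebra, as any such intertwining would produce an amenable corner of $M_1$). Hence Corollary \ref{amalgam} applies with $Q = M_1$ and gives $M_1' \cap M_{n+1} \subset M_n$. Iterating, $M_1' \cap M_{n+1} \subset M_1' \cap M_1 = \C 1$ for every $n$, and since $M = (\bigcup_{n} M_n)\dpr$ this yields $M_1' \cap M = \C 1$.

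Next I would record the standard fact that the $M_1$-central vectors of the $M_1$-$M_1$ bimodule $\rL^2(M)$, namely the $\xi \in \rL^2(M)$ with $a\xi = \xi a$ for all $a \in M_1$, constitute the subspace $\rL^2(M_1' \cap M)$: the closed operator affiliated with $M$ associated with such a $\xi$ commutes with $M_1$, so its polar part and all spectral projections of its modulus lie in $M_1' \cap M$. By the previous step, this subspace equals $\C\widehat 1$. Now assume for contradiction that $M$ has property Gamma. By the Connes--Jones bimodule characterization, property (T) of $M_1$ furnishes a finite set $F \subset M_1$ and $\delta > 0$ such that every unit vector $\eta \in \rL^2(M)$ with $\|a\eta - \eta a\|_2 < \delta$ for all $a \in F$ lies within $\tfrac{1}{2}$ of an $M_1$-central vector. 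By property Gamma there is $v \in \mathscr U(M)$ with $\tau(v) = 0$ and $\|[v, a]\|_2 < \delta$ for all $a \in F$. Then $\widehat v$ is a unit vector of $\rL^2(M)$ satisfying the above hypothesis, so $\|\widehat v - \xi\|_2 < \tfrac{1}{2}$ for some $M_1$-central $\xi \in \rL^2(M)$; by the previous paragraph $\xi = c\widehat 1$ with $c \in \C$. But $\widehat v \perp \widehat 1$ since $\tau(v) = 0$, hence $\|\widehat v - c\widehat 1\|_2^2 = \|\widehat v\|_2^2 + |c|^2 \geq 1$, contradicting $\|\widehat v - \xi\|_2^2 < \tfrac{1}{4}$. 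Therefore $M$ does not have property Gamma.

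The substantive step is the irreducibility $M_1' \cap M = \C 1$: this is where the relative commutant control of the $\Phi$-construction (Corollary \ref{amalgam}) is combined with property (T) of $M_1$, the latter being needed precisely to prevent $M_1$ from intertwining into the abelian algebras $\{u_i^{\sigma(n)}\}\dpr$ at each stage. Once irreducibility is established, the absence of property Gamma is a routine spectral gap argument, and I do not foresee further difficulties.
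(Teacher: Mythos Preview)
The paper does not supply its own proof of this proposition; it merely quotes it as \cite[Proposition 5.4]{CIKE22} and moves directly to the proof of Theorem \ref{inductive}. So there is no in-paper argument to compare against.

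Your argument is correct and is precisely the strategy one expects from \cite{CIKE22}: first establish irreducibility $M_1'\cap M=\C1$ by feeding $Q=M_1$ into the relative-commutant clause of Corollary \ref{amalgam} at each stage of the inductive construction, then run the Connes--Jones spectral gap argument for the $M_1$-bimodule $\rL^2(M)$ to rule out Gamma. Two small remarks. First, in the non-intertwining step you do not actually need property (T): any ${\rm II_1}$ factor $M_1$ satisfies $M_1\npreceq_{M_n}\{u_i^{\sigma(n)}\}\dpr$ simply because a corner $pM_1p$ is again a ${\rm II_1}$ factor and admits no unital normal $\ast$-homomorphism into an abelian (or even type I) algebra; property (T) is only needed in the second step. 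Second, the passage from $M_1'\cap M_n=\C1$ for all $n$ to $M_1'\cap M=\C1$ deserves one line: for $x\in M_1'\cap M$ one has $\rE_{M_n}(x)\in M_1'\cap M_n=\C1$ for every $n$, and $\rE_{M_n}(x)\to x$ in $\|\cdot\|_2$. Neither point is a gap, and your write-up is fine as it stands.
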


\begin{proof}[Proof of Theorem \ref{inductive}]
Let $M_1$ be a separable ${\rm II_1}$ factor with property (T), e.g., take $M_1=\rL(\PSL_n(\mathbb Z))$, for $n\geq 3$.  Let $M$ be constructed as in Definition \ref{M}. The conclusion follows from Propositions \ref{2unitaries} and \ref{propT}.
\end{proof}

\subsection{Proof of Theorem \ref{orthogonal}} We may clearly assume that $z\not=0$ and $z\in M_{\text{sa},1}$, for every $z\in X\cup Y$. Further, we may assume that $X$ and $Y$ consist of pairwise orthogonal vectors. 
Enumerate $X=\{x_1,\dots,x_m\}$ and $Y=\{y_1,\dots,y_n\}$ and define $x=(x_1,\dots,x_m)\in M^m_{\text{sa},1}$ and $y=(y_1,\dots,y_n)\in M^n_{\text{sa},1}$.

By \cite[Corollary 0.2]{Po13a} there exists $v\in\mathscr U(M^{\mathcal U})$ such that $vMv^*$ and $M$ are freely and hence $2$-independent. Then $\|[vx_iv^*,y_j]\|_2=\sqrt{2}\|x_i\|_2\|y_j\|_2>0$ and $\tau^{\mathcal U}(vx_iv^*y_j)=~0$, for every $1\leq i\leq m$, $1\leq j\leq n$. Moreover, for every $(i,j)\not=(i',j')$, we have $\langle [vx_iv^*,y_j], [vx_{i'}v^*,y_{j'}]\rangle=\tau(x_ix_{i'})\tau(y_jy_{j'})=0.$ Thus, we conclude that $\delta(vxv^*,y)>0$ and $\varepsilon(vxv^*,y)=\gamma(vxv^*,y)=0$.
In particular, \begin{equation}\label{condition}13mn\sqrt{\varepsilon(vxv^*,y)}<\delta(vxv^*,y)^2-(mn-1)\gamma(vxv^*,y).\end{equation}

Writing $v=(v_k)^{\mathcal U}$, where $v_k\in\mathscr U(M)$, for all $k\in\mathbb N$. Then  $\lim_{k\rightarrow\mathcal U}\delta(v_kxv_k^*,y)=\delta(vxv^*,y)$, $\lim_{k\rightarrow\mathcal U}\varepsilon(v_kxv_k^*,y)=\varepsilon(vxv^*,y)$ and $\lim_{k\rightarrow\mathcal U}\gamma(v_kxv_k^*,y)=\gamma(vxv^*,y)$. Using \eqref{condition} gives $k\in\mathbb N$ such that $13mn\sqrt{\varepsilon(v_kxv_k^*,y)}<\delta(v_kxv_k^*,y)^2-(mn-1)\gamma(v_kxv_k^*,y)$. By applying Lemma \ref{perturbation}, we can find $w\in\mathscr U(M)$ such that $\varepsilon(w(v_kxv_k^*)w^*,y)=0$. Letting $u=wv_k\in\mathscr U(M)$, we get that $\varepsilon(uXu^*,Y)=0$, i.e., $uXu^*$ and $Y$ are orthogonal.
\hfill$\square$

%%%%%%%%%%%%%%%%%%%

\bibliographystyle{plain}

\end{document}